\documentclass[reqno,12pt]{amsart}
\usepackage{amsmath,amssymb,latexsym,soul,cite,mathrsfs}
\usepackage{blindtext}
\usepackage{enumitem}
\usepackage{ragged2e}
\usepackage[utf8]{inputenc}
\usepackage{amsthm,amsfonts}
\usepackage{comment}
\usepackage[mathcal]{eucal}
\usepackage{latexsym}
\usepackage[utf8]{inputenc}
\usepackage{bm}
\usepackage[all]{xy}
\usepackage{xcolor}
\usepackage{graphicx}
\usepackage{hyperref}

\usepackage[hyperpageref]{backref}

\newtheorem{theorem}{Theorem}

\newtheorem{theo}{Theorem}[section]

\newtheorem{remark}{Remark}[section]

\newtheorem{lemma}{Lemma}[section]

\newtheorem{prop}{Proposition}[section]

\newtheorem{defi}{Definition}[section]

\newtheorem{cor}{Corollary}[section]

\numberwithin{equation}{section}

\title[SOBOLEV EMBEDDINGS ON SPHERICALLY SYMMETRIC MANIFOLDS]{RADIAL SOBOLEV EMBEDDINGS ON SPHERICALLY SYMMETRIC RIEMANNIAN MANIFOLDS}

\author[J.M.\ do \'O]{Jo\~ao Marcos do \'O}
\author[G. Lu]{Guozhen Lu}
\author[R.~Ponciano]{Raon\'{\i} Ponciano}

\address[Jo\~{a}o Marcos do \'O]{Dep. Mathematics,
	Federal University of Para\'{\i}ba
	\newline\indent
	58051-900, Jo\~ao Pessoa-PB, Brazil}
\email{\href{mailto:jmbo@pq.cnpq.br}{jmbo@pq.cnpq.br}}

\address[Guozhen Lu]{Dep. Mathematics, University of Connecticut
	\newline\indent
	06269, Storrs-CT, United States of America}
\email{\href{mailto:guozhen.lu@uconn.edu}{guozhen.lu@uconn.edu}}

\address[Raon\'i Ponciano]{Dep. of Mathematics,
	Federal University of ABC
	\newline\indent
	09280-560, Santo Andr\'e-SP, Brazil}
\email{\href{mailto:raoni.ponciano@ufabc.edu.br}{raoni.ponciano@ufabc.edu.br}}

\subjclass[2020]{46E35, 35A23, 35J30, 58J70, 35R01}

\keywords{Weighted Sobolev space; Radial Lemma; Spherically Symmetric Riemannian Manifold; Compact embedding}

\begin{document}

\begin{abstract}
We study Sobolev spaces of radial functions on spherically symmetric Riemannian manifolds. Using geodesic polar coordinates, we give a sharp one-dimensional reduction: a radial function belongs to the Sobolev space on the manifold if and only if its radial representation lies in an associated weighted Sobolev space on an interval, with weights determined explicitly by the metric. This characterization allows us to prove optimal Sobolev-type embeddings for radial functions into weighted Lebesgue spaces on both bounded and unbounded spherically symmetric manifolds. As further consequences, we establish new radial lemmas and decay estimates that capture the precise behaviour of radial Sobolev functions near the origin and at infinity. Our results unify and extend the classical radial embeddings in Euclidean and hyperbolic spaces.
\end{abstract}

\maketitle

\begin{center}
		\footnotesize
		\tableofcontents
	\end{center}

\section{Introduction}
Sobolev embedding theorems are fundamental tools in the analysis of partial differential equations, as they provide a connection between different function spaces and relate regularity properties. In geometric analysis, these embeddings play a central role, since they link the behavior of functions to the geometry. Such connections are essential for deriving a priori estimates, compactness properties, and regularity results for solutions of partial differential equations. They are also a key ingredient in variational methods. Beyond their theoretical importance, Sobolev embeddings have broad applicability across mathematics and the applied sciences, including geometry, physics, and engineering.

\subsection{Weighted Sobolev Embeddings in the Euclidean Setting}

First, let us consider embedding theorems for radial functions defined on Euclidean balls, where no boundary conditions are imposed. Define $L^q_{|x|^\theta}(B_R^{\mathbb R})$ as the space of measurable functions $u\colon B_R^{\mathbb R}\to\mathbb R$ satisfying $\int_{B_R^{\mathbb R}}|u|^q|x|^\theta\mathrm dx<\infty$, where $q\geq1$, $\theta\geq0$, and $B_R^{\mathbb R}\subset \mathbb R^N$ is an open Euclidean ball centered at the origin with radius $R\in(0,\infty]$. The following pioneering theorem by de Figueiredo et al. \cite[Theorem 1.1]{MR2838041} introduced higher order embedding results for Sobolev spaces of radial functions on Euclidean balls, as well as the development of radial lemmas for $u\in W_{\mathbb R,\mathrm{rad}}^{k,p}(B_R^{\mathbb R})$, where $W_{\mathbb R,\mathrm{rad}}^{k,p}(B_R^{\mathbb R})$ denotes the space of radially symmetric functions in the Sobolev space $W^{k,p}_{\mathbb{R}}(B_R^{\mathbb{R}})$.

\begin{theorem}\label{theoDSM}
Assume $R\in(0,\infty)$, $p\geq1$ real, and $k \geq 1$ integer.
\begin{flushleft}
$\mathrm{(1)}$ Every function \( u \in W^{k,p}_{\mathbb{R},\mathrm{rad}}(B_R^{\mathbb{R}}) \) is almost everywhere equal to a function \( U \in C^{k-1}(\overline{B_R^{\mathbb{R}}} \setminus \{0\}) \). Moreover, all partial derivatives of \( U \) of order \( k \) (in the classical sense) exist almost everywhere for \( |x| \in (0, R) \).

$\mathrm{(2)}$ If \( N > kp \), then \( W^{k,p}_{\mathbb{R},\mathrm{rad}}(B_R^{\mathbb{R}}) \) is continuously embedded in \( L^q_{|x|^\theta}(B_R^{\mathbb{R}}) \) for every \( \theta \geq 0 \) and \( 1 \leq q \leq \frac{(\theta + N)p}{N - kp} \).

$\mathrm{(3)}$ If \( N = kp \) and \( p > 1 \), then \( W^{k,p}_{\mathbb{R},\mathrm{rad}}(B_R^{\mathbb{R}}) \) is compactly embedded in \( L^q_{|x|^\theta}(B_R^{\mathbb{R}}) \) for all \( \theta \geq 0 \) and \( 1 \leq q < \infty \).

$\mathrm{(4)}$ If \( N = kp \) and \( p = 1 \), then \( W^{N,1}_{\mathbb{R},\mathrm{rad}}(B_R^{\mathbb{R}}) \) is continuously embedded in \( C(\overline{B_R^{\mathbb{R}}}) \).
\end{flushleft}
\end{theorem}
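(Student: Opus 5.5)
The plan is to reduce everything to one-dimensional weighted estimates on the interval $(0,R)$. For radial $u(x)=v(|x|)$, polar coordinates give $\int_{B_R^{\mathbb R}}|u|^q|x|^\theta\,\mathrm dx=\omega_{N-1}\int_0^R|v(r)|^q r^{\theta+N-1}\,\mathrm dr$. Moreover, $|D^k u|$ controls the quantities $|v^{(j)}(r)|/r^{k-j}$ up to lower-order terms, so $\|u\|_{W^{k,p}}$ is equivalent to a norm on $v$ of the form $\sum_{j\le k}\bigl(\int_0^R|v^{(j)}|^p r^{N-1}\,\mathrm dr\bigr)^{1/p}$. To justify this, I would approximate by smooth radial functions: mollification commutes with rotations, so smooth radial functions are dense in $W^{k,p}_{\mathbb R,\mathrm{rad}}$. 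I would then prove each estimate for smooth $v$ and pass to the limit.

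\textbf{Part (1).} On any annulus $\varepsilon<|x|<R$ the weight $r^{N-1}$ is bounded above and below. Hence $v\in W^{k,p}(\varepsilon,R)$ in one dimension, so $v$ has a $C^{k-1}$ representative on $[\varepsilon,R]$ whose $(k-1)$-st derivative is absolutely continuous. Setting $U(x)=v(|x|)$ gives the claim, and the order-$k$ derivatives exist a.e.\ by the chain rule, since $|x|$ is smooth away from $0$.

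\textbf{Parts (2)--(4).} The key tool is a radial lemma. For $j=k-1,\dots,0$, I would write $v^{(j)}(r)=v^{(j)}(R)-\int_r^R v^{(j+1)}(s)\,\mathrm ds$, where the boundary values $|v^{(j)}(R)|$ are controlled by the norm via the trace on $[R/2,R]$. Applying H\"older with the weight $s^{N-1}$ gives
\[
\int_r^R |w|\,\mathrm ds\le\Bigl(\int_r^R|w|^ps^{N-1}\,\mathrm ds\Bigr)^{1/p}\Bigl(\int_r^R s^{-(N-1)/(p-1)}\,\mathrm ds\Bigr)^{(p-1)/p}.
\]
This yields $|v^{(k-1)}(r)|\lesssim\|u\|\,r^{(p-N)/p}$ when $N>p$. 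Iterating down to $j=0$, I expect the pointwise bound $|v(r)|\lesssim\|u\|\,r^{-(N-kp)/p}$ for $N>kp$, and $|v(r)|\lesssim\|u\|\,(1+|\log r|)^{(p-1)/p}$ in the case $N=kp$, $p>1$. For $p=1$, $N=k$, every integral closes without loss, since $\int |v^{(k)}|s^{N-1}$ iterated $N$ times is dominated by $\int|v^{(N)}|$ times the harmless factor $s^{N-1}\le R^{N-1}$. This gives uniform boundedness and continuity at $0$, which is (4); continuity of the embedding follows from the same estimate applied to differences.

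The bare pointwise bound does not reach the endpoint in (2): it only gives $q<(\theta+N)p/(N-kp)$, since $|v|^q r^{\theta+N-1}\lesssim r^{\theta+N-1-q(N-kp)/p}$ is integrable exactly for $q<(\theta+N)p/(N-kp)$. For the endpoint $q^*=(\theta+N)p/(N-kp)$ I would interpolate. I would write $|v|^{q^*}=|v|^{q^*-p^*}|v|^{p^*}$ with $p^*=Np/(N-kp)$, bound $|v|^{q^*-p^*}$ by the radial lemma, and note that $q^*-p^*=\theta p/(N-kp)$, so that $r^{\theta}|v|^{q^*-p^*}\lesssim\|u\|^{q^*-p^*}$. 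The remaining factor is handled by the classical (unweighted) Sobolev embedding $W^{k,p}(B_R)\hookrightarrow L^{p^*}$. This gives the sharp range; the range $q<q^*$ then follows by H\"older on the bounded ball.

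\textbf{Compactness in (3).} Take a bounded sequence. By Rellich on annuli, some subsequence converges in $L^q$ of each annulus $\varepsilon<|x|<R$. Near the origin, the logarithmic radial bound gives $\int_0^\varepsilon|v|^q r^{\theta+N-1}\,\mathrm dr\lesssim\|u\|^q\varepsilon^{\theta+N}|\log\varepsilon|^{q(p-1)/p}\to0$ uniformly along the sequence, so the tails are uniformly small and a diagonal argument gives convergence in $L^q_{|x|^\theta}$.

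I expect the main obstacle to be the iteration in the radial lemma. Each step must pick up exactly the power $r^{p}$ without producing divergent integrals at intermediate orders. For $N>kp$, some intermediate exponents $N-jp$ may be $\le 0$, so one must switch between integrating from $r$ to $R$ and integrating from $0$ to $r$, and track the borderline logarithmic cases.
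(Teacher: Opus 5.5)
The paper does not reprove this theorem; it quotes it from de Figueiredo et al. Its proof of the generalization, Theorem \ref{theo11}, follows your route for (1)--(3). That route is: reduce to $v$, get the radial lemma by integrating inward from $R$ with H\"older against $s^{N-1}$, reach the endpoint of (2) by writing $|u|^{q^*}=|u|^{q^*-p^*}|u|^{p^*}$ and using the unweighted Sobolev embedding, and use the logarithmic bound for (3). Your Rellich-on-annuli plus uniform-tail argument actually supplies the compactness that the paper's proof of Theorem \ref{theo11}(3) only asserts. Three corrections, none fatal there:
\begin{enumerate}
\item[(a)] You only need the one-sided bound $|v^{(j)}(|x|)|\le|\nabla^j u(x)|$ (Theorem \ref{theouv}). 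The two-sided norm equivalence you state is false in general: for $N\le(k-1)p$, $u(x)=|x|$ is a counterexample (Proposition \ref{propc}).
\item[(b)] For $N>kp$, every exponent $N-jp$ with $j\le k$ is positive. The iteration therefore never needs to switch endpoints, and the obstacle you anticipate does not arise.
\item[(c)] For $N=kp$ and $k\ge2$, pointwise iteration gives only $|v(r)|\lesssim\|u\|(1+\log\frac Rr)$, not the power $\frac{p-1}{p}$. The sharp power needs a Hardy inequality (Proposition \ref{prophardy}) giving $\int_0^R|v'|^p s^{p-1}\,\mathrm ds\lesssim\|u\|^p$, followed by one H\"older step. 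Either bound suffices for (3).
\end{enumerate}

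The genuine gap is in (4). Run your pointwise scheme with $p=1$, $N=k$: you get $|v^{(k-j)}(r)|\lesssim\|u\|\,r^{j-N}$, so $|v'(r)|\lesssim\|u\|/r$. The last integration then yields only $|v(r)|\lesssim\|u\|(1+\log\frac Rr)$, which is not a uniform bound. The justification you offer instead compares with $\int|v^{(N)}|$ using $s^{N-1}\le R^{N-1}$. That runs the wrong way: you know $\int_0^R|v^{(N)}|s^{N-1}\,\mathrm ds<\infty$, not $\int_0^R|v^{(N)}|\,\mathrm ds<\infty$.

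The missing idea is to keep the $N$ integrations nested, via Taylor's formula at $R$ with integral remainder (valid by part (1)):
\[
v(r)=\sum_{j=0}^{N-1}\frac{v^{(j)}(R)}{j!}(r-R)^j+\frac{(-1)^N}{(N-1)!}\int_r^R(s-r)^{N-1}v^{(N)}(s)\,\mathrm ds,
\]
and then use $0\le s-r\le s$ on $[r,R]$. This gives $\sup_{(0,R]}|v|\le C\|u\|_{W^{N,1}}$, with the boundary terms controlled on $[R/2,R]$ as you planned. Moreover, the integrand is dominated by $s^{N-1}|v^{(N)}(s)|\in L^1(0,R)$, so dominated convergence shows that $\lim_{r\to0}v(r)$ exists. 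That limit is exactly what continuity of $U$ at the origin requires; a sup bound alone would not give it.
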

To prove Theorem \ref{theoDSM}, the following radial lemmas were crucial to the argument. If $N>kp$, then there exists $C>0$ such that for all $u\in W^{k,p}_{\mathbb R,\mathrm{rad}}(B_R^{\mathbb R})$,
\begin{equation*}
|u(x)|\leq C\dfrac{\|u\|_{W^{k,p}(B_R^{\mathbb R})}}{|x|^{\frac{N-kp}p}},\quad\mbox{a.e. } x\in \overline{B_R^{\mathbb R}}\backslash\{0\}.
\end{equation*}
If $N=kp$ and $p>1$, then there exists $C>0$ such that for all $u\in W^{k,p}_{\mathbb R,\mathrm{rad}}(B_R^{\mathbb R})$,
\begin{equation*}
|u(x)|\leq C\|u\|_{W^{k,p}(B_R^{\mathbb R})}\left[\left(\log\frac{R}{|x|}\right)^{\frac{p-1}p}+1\right],\quad\mbox{a.e. } x\in \overline{B_R^{\mathbb R}}\backslash\{0\}.
\end{equation*}

We also mention that the first radial lemmas were established by Ni \cite[Equation (4)]{MR674869} for functions in $H^1_{\mathbb R,\mathrm{rad},0}(B^{\mathbb R}_1)$, and were later extended by Gazzini and Serra \cite[Lemma 2.1]{MR2396523} for functions in $H^1_{\mathbb R,\mathrm{rad}}(B^{\mathbb R}_1)$. Additionally, Dalmasso \cite[Lemme 3.1]{MR1056158} obtained a radial lemma for functions in $H^2_{\mathbb R,\mathrm{rad},0}(B_1^{\mathbb R})$. For the non-Hilbert Sobolev space $W^{1,p}_{\mathbb R,\mathrm{rad}}(B_R^{\mathbb R})$, the corresponding radial lemma was established in \cite[Corollary 2.2.]{zbMATH05796526}.

In the context of the domain being the entire Euclidean space $\mathbb R^N$, our previous work, notably \cite[Theorem 1.1]{MR4819618}, together with the item (1) of Theorem \ref{theoDSM}, yields the following result.
\begin{theorem}\label{theoB}
Assume $p\geq1$ real and $k \geq 1$ integer.
\begin{flushleft}
    $\mathrm{(1)}$ \justifying{Every function $u\in W^{k,p}_{\mathbb R,\mathrm{rad}}(\mathbb R^N)$ is almost everywhere equal to a function $U$ in $C^{k-1}(\mathbb R^N\backslash\{0\})$. Moreover, all partial derivatives of $U$ of order $k$ (in the classical sense) exist almost everywhere for $|x|\in(0,\infty)$.}
    
    \noindent$\mathrm{(2)}$ \justifying{If $N>kp$, then $W^{k,p}_{\mathbb R,\mathrm{rad}}(\mathbb R^N)$ is continuously embedded in $L^q_{|x|^\theta}(\mathbb R^N)$ for every $\theta\geq0$ and $p\leq q\leq \frac{(\theta+N)p}{N-kp}$.}
    
    \noindent$\mathrm{(3)}$ \justifying{If $N=kp$, then $W^{k,p}_{\mathbb R,\mathrm{rad}}(\mathbb R^N)$ is compactly embedded in $L^q_{|x|^\theta}(\mathbb R^N)$ for all $\theta\geq0$ and $p\leq q<\infty$.}
\end{flushleft}
\end{theorem}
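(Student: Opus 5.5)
Item (1) is local and follows from item (1) of Theorem \ref{theoDSM}. Fix $R>0$. The restriction of $u\in W^{k,p}_{\mathbb R,\mathrm{rad}}(\mathbb R^N)$ to $B_R^{\mathbb R}$ lies in $W^{k,p}_{\mathbb R,\mathrm{rad}}(B_R^{\mathbb R})$, so it agrees a.e. with some $U_R\in C^{k-1}(\overline{B_R^{\mathbb R}}\setminus\{0\})$ whose classical $k$-th partial derivatives exist a.e. Two continuous functions that agree a.e. on an open set agree everywhere there. Hence $U_R=U_{R'}$ on $B_{\min(R,R')}^{\mathbb R}\setminus\{0\}$, and the $U_R$ glue to a single $U\in C^{k-1}(\mathbb R^N\setminus\{0\})$. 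Applying this along $R=1,2,\dots$ shows that the $k$-th derivatives exist a.e. in $|x|\in(0,\infty)$.

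For items (2) and (3) I would split $\mathbb R^N=B_1^{\mathbb R}\cup(\mathbb R^N\setminus B_1^{\mathbb R})$ and estimate
\[
\int_{\mathbb R^N}|u|^q|x|^\theta\,dx=\int_{B_1}|u|^q|x|^\theta\,dx+\int_{|x|\ge1}|u|^q|x|^\theta\,dx .
\]
The inner integral is bounded by $C\|u\|_{W^{k,p}(B_1)}^q\le C\|u\|^q_{W^{k,p}(\mathbb R^N)}$ by Theorem \ref{theoDSM}(2),(3), since $q\ge p\ge1$ lies in the admissible range. The outer integral is the content of the cited \cite[Theorem 1.1]{MR4819618}. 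The mechanism there is a Strauss-type radial lemma at infinity, $|u(x)|\le C|x|^{-(N-1)/p}\|u\|_{W^{1,p}}$ for $|x|\ge1$. It comes from writing $u(r)=-\int_r^\infty u'(s)\,ds$ and then using Hölder with the weight $s^{N-1}$. The lemma would need an approximation/density argument to justify the vanishing at infinity. Combined with higher-order information, it yields a pointwise decay $|u(x)|\le C|x|^{-\gamma}\|u\|$ with $\gamma$ large enough (and increasing with $k$) for $|x|^{\theta}|u|^{q-p}$ to stay bounded. The outer integral is then controlled by $\int|u|^p$. The higher-order information would come from iterating one-dimensional Hardy-type inequalities on the derivatives $u^{(j)}(r)$. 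That requires $q\ge p$, which explains the lower bound on $q$ that replaces $q\ge1$ in the unbounded case.

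For compactness in (3), I would take a bounded sequence, extract a weakly convergent subsequence, and use compactness on $B_R^{\mathbb R}$ from Theorem \ref{theoDSM}(3) for each $R$. A diagonal argument then gives strong convergence in $L^q_{|x|^\theta}(B_R^{\mathbb R})$ for every $R$. The tails $\int_{|x|\ge R}|u_n|^q|x|^\theta$ are uniformly small in $n$ as $R\to\infty$, by the decay bound above, provided $q>p$ or the decay exponent strictly exceeds what $\theta$ requires. The endpoint $q=p$ would need separate care: with $\theta>0$, interpolate using the uniform smallness of $|x|^{\theta}$-weighted tails coming from the higher-order decay.

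The main obstacle is the outer region with large $\theta$. A simple $W^{1,p}$ Strauss decay cannot absorb an arbitrary $|x|^\theta$, so the argument must extract decay rates growing with the order of derivatives. Carrying that out is precisely what \cite[Theorem 1.1]{MR4819618} does, so here I would invoke that result rather than reprove it. The remaining work is the gluing and patching described above.
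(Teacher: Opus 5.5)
Your handling of item (1) is correct and matches the paper. The paper obtains Theorem~\ref{theoB} simply by combining item (1) of Theorem~\ref{theoDSM} with \cite[Theorem 1.1]{MR4819618}, and your gluing over $R=1,2,\dots$ is the right way to pass from balls to $\mathbb R^N\setminus\{0\}$.

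The gap is in the outer region. The mechanism you describe, a pointwise decay $|u(x)|\le C|x|^{-\gamma}\|u\|_{W^{k,p}}$ with $\gamma$ increasing in $k$, does not exist. Take $N\ge2$, $\eta\in C_c^\infty(-1,1)$ with $\eta\not\equiv0$, and for $\rho\ge2$ set $u_\rho(x)=v_\rho(|x|)$ with $v_\rho(r)=\rho^{-\frac{N-1}{p}}\eta(r-\rho)$. Every Cartesian derivative $\partial^\alpha u_\rho$ with $|\alpha|=j$ is bounded by $C\sum_{i\le j}|v_\rho^{(i)}(r)|\,r^{i-j}$, and $r\ge1$ on the support. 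Hence $\|u_\rho\|_{W^{k,p}(\mathbb R^N)}\le C$ uniformly in $\rho$, for every $k$, while $\sup|u_\rho|=\|\eta\|_\infty\,\rho^{-\frac{N-1}{p}}$. So Strauss' rate $|x|^{-\frac{N-1}{p}}$ is sharp for all $k$, and the best your splitting can give is
\[
\int_{|x|\ge1}|u|^q|x|^\theta\,dx\le C\|u\|_{W^{1,p}}^{q-p}\int_{|x|\ge 1}|u|^p|x|^{\theta-\frac{(N-1)(q-p)}{p}}\,dx .
\]
This is controlled by $\|u\|_{L^p}^p$ only when $\theta\le\frac{(N-1)(q-p)}{p}$, i.e.\ $q\ge p\bigl(1+\frac{\theta}{N-1}\bigr)$. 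At $q=p$ the factor $|x|^\theta|u|^{q-p}$ is just $|x|^\theta$, so neither a decay estimate nor the interpolation you suggest can help at that endpoint.

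This cannot be repaired, because the same family shows the statement fails there. Indeed,
\[
\int_{\mathbb R^N}|u_\rho|^q|x|^\theta\,dx\ge c\,\rho^{\,N-1+\theta-\frac{(N-1)q}{p}}\longrightarrow\infty\qquad\text{whenever } p\le q<p\Bigl(1+\frac{\theta}{N-1}\Bigr).
\]
For every $\theta>0$ this is a nonempty part of the ranges claimed in (2) and (3), so the continuous embeddings are false there. At $q=p(1+\frac{\theta}{N-1})$, in particular at $\theta=0$, $q=p$, one has $u_\rho\to0$ uniformly but $\int|u_\rho|^q|x|^\theta\,dx\ge c>0$, so the compactness in (3) also fails. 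Deferring to \cite[Theorem 1.1]{MR4819618}, as both you and the paper do, cannot cover these parameters, whatever that result's exact hypotheses are.

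What your decomposition actually proves is the following:
\begin{itemize}
\item (2) holds for $p(1+\frac{\theta}{N-1})\le q\le\frac{(\theta+N)p}{N-kp}$.
\item (3) holds continuously for $q\ge p(1+\frac{\theta}{N-1})$.
\item (3) holds compactly for $q>p(1+\frac{\theta}{N-1})$, with uniformly small tails coming from the strict inequality.
\end{itemize}
The lower bound $q\ge p$ in the statement is correct only for $\theta=0$, and then only for continuity.

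A minor point: for $N=kp$ with $p=1$, the inner estimate needs Theorem~\ref{theoDSM}(4), not (3). Local compactness then follows from Rellich in $L^1$ together with the uniform $L^\infty$ bound and interpolation.
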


In Theorem \ref{theoB}, the estimate $\|u\|_{L^q_{|x|^\theta}(\mathbb R^N)} \leq C\|u\|_ {W^{k,p}(\mathbb{R}^N)}$ was obtained, utilizing the full norm of the Sobolev space $W^{k,p}(\mathbb{R}^N)$. For the estimate $\|u\|_{L^q_{|x|^\theta}(\mathbb R^N)} \leq C \|\nabla^k u\|_{L^p_{|x|^\alpha}(\mathbb R^N)}$, which involves only the norm in $L^p_{|x|^\alpha}(\mathbb{R}^N)$ of the $k$-th gradient, we refer to \cite{zbMATH06376695}. Under the additional assumption of radial symmetry, these two estimates become equivalent, since the norms $\|u\|_{W^{k,p}(\mathbb R^N)}$ and $\|\nabla^ku\|^p_{L^p_{|x|^\alpha}(\mathbb R^N)}$ are equivalent on the radial Sobolev space $W^{k,p}_{\mathbb R,\mathrm{rad}}(\mathbb R^N)$; see \cite[Theorem 1.1]{MR4721763}.

To establish Theorem \ref{theoB}, we relied on \cite[Lemme II.1]{zbMATH03789294} (see also Strauss \cite[Lemma 1]{MR454365}), which provides the following decay property for functions in $W^{1,p}_{\mathbb R,\mathrm{rad}}(\mathbb R^N)$ ($N\geq2$),
\begin{equation}\label{dlrn}
|u(x)|\leq \left(\dfrac{p}{\omega_{N-1}}\right)^{\frac1p}\|u\|_{L^p(\mathbb R^N)}^{\frac{p-1}p}\|\nabla u\|^{\frac1p}_{L^p(\mathbb R^N)}|x|^{-\frac{N-1}p},\quad\mbox{a.e. } x\in\mathbb R^N\backslash\{0\},
\end{equation}
where $\omega_{N-1}$ denotes the $(N-1)$-dimensional volume of the unit sphere $\mathbb S^{N-1}$.

\subsection{Weighted Sobolev Embeddings in the Hyperbolic Setting}

In our recent work \cite{MR4959099}, we studied the Sobolev space of radial functions $W^{k,p}_{\mathbb H,\mathrm{rad}}(B_R^\mathbb H)$, defined on an open ball $B_R^{\mathbb H}=\{x\in\mathbb H^N\colon d(x)<R\}$, where $d(x)$ is the distance between $x$ and $o$ and the functions are radial with respect to the origin $o$. In this context, we allowed $R\in(0,\infty]$, including both finite and infinite radius for the ball. There, we established the Sobolev embedding of this space into $L^q_{\sinh^\theta}(B_R^\mathbb H)$ for $R\in(0,\infty]$, where $L^q_{\sinh^\theta}(B^{\mathbb H}_R)$ is the space of measurable functions $u\colon B_R^{\mathbb H}\to\mathbb R$ satisfying $\int_{B_R^{\mathbb H}}|u|^q\sinh^\theta d(x)\mathrm dx<\infty$. The case $R\in(0,\infty)$ is stated in \cite[Theorem 1.3]{MR4959099} and in the following theorem.
\begin{theorem}\label{theoC}
Assume $R\in(0,\infty)$, $p\geq1$ real, and $k\geq1$ integer.
\begin{flushleft}
    $\mathrm{(1)}$ \justifying{Every function $u\in W^{k,p}_{\mathbb H,\mathrm{rad}}(B_R^\mathbb H)$ is almost everywhere equal to a function $U$ in $C^{k-1}(\overline{B_R^\mathbb H}\backslash\{0\})$. Additionally, all partial derivatives of $U$ of order $k$ (in the classical sense) exist almost everywhere for $d(x)\in(0,R)$.}
    
    \noindent$\mathrm{(2)}$ \justifying{If $N>kp$, then $W^{k,p}_{\mathbb H,\mathrm{rad}}(B_R^{\mathbb H})$ is continuously embedded in $L^q_{\sinh^\theta}(B_R^{\mathbb H})$ for every $\theta\geq0$ and $1\leq q\leq \frac{(\theta+N)p}{N-kp}$.}
    
     \noindent$\mathrm{(3)}$ \justifying{If $N=kp$ and $p>1$, then $W^{k,p}_{\mathbb H,\mathrm{rad}}(B_R^{\mathbb H})$ is compactly embedded in $L^q_{\sinh^\theta}(B_R^{\mathbb H})$ for all $\theta\geq0$ and $1\leq q<\infty$.}
     
     \noindent$\mathrm{(4)}$ \justifying{If $N=kp$ and $p=1$, then $W^{N,1}_{\mathbb H,\mathrm{rad}}(B_R^{\mathbb H})$ is continuously embedded in $C(\overline{B_R^{\mathbb H}})$.}
\end{flushleft}
\end{theorem}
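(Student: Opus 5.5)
The plan is to reduce Theorem \ref{theoC} to the Euclidean Theorem \ref{theoDSM} through a one-dimensional comparison of weights on the bounded interval $(0,R)$. Write a radial $u$ as $u(x)=v(d(x))$. In geodesic polar coordinates on $\mathbb H^N$ the volume element is $\sinh^{N-1}r\,\mathrm dr\,\mathrm d\sigma$, while in $\mathbb R^N$ it is $r^{N-1}\mathrm dr\,\mathrm d\sigma$. Since $R<\infty$, we have
\[
r\le \sinh r\le \frac{\sinh R}{R}\,r \quad\text{on }[0,R],
\]
so the two volume weights are comparable up to constants depending only on $N$ and $R$. Likewise $\sinh^\theta d(x)\asymp |x|^\theta$. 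Hence the weighted Lebesgue norms $L^q_{\sinh^\theta}(B_R^{\mathbb H})$ and $L^q_{|x|^\theta}(B_R^{\mathbb R})$ of the corresponding radial profiles are equivalent. The main task is to show that the map $u\mapsto \tilde u$, with $\tilde u(y)=v(|y|)$, is a bounded isomorphism between $W^{k,p}_{\mathbb H,\mathrm{rad}}(B_R^{\mathbb H})$ and $W^{k,p}_{\mathbb R,\mathrm{rad}}(B_R^{\mathbb R})$. Once this holds, items (2)--(4) follow at once from the corresponding items of Theorem \ref{theoDSM} and the weight comparison. For item (3), compactness transfers because a bounded isomorphism composed with a compact embedding, followed by an equivalence of norms, is still compact.

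For item (1), the regularity away from the origin is local. On the annulus $\{0<\varepsilon<d(x)<R\}$ the profile $v$ lies in $W^{k,p}(\varepsilon,R)$ with the weight $\sinh^{N-1}r$, which is bounded above and below there. So $v$ is in the unweighted $W^{k,p}(\varepsilon,R)$, hence $v\in C^{k-1}([\varepsilon,R])$ with $v^{(k-1)}$ absolutely continuous. This gives $U\in C^{k-1}(\overline{B_R^{\mathbb H}}\setminus\{0\})$, and the $k$-th derivatives exist almost everywhere. The a.e. identification of $u$ with the smooth radial extension follows from standard one-dimensional Sobolev theory.

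The key identity for the norm equivalence expresses the Riemannian derivatives of a radial function in terms of the derivatives of $v$. The covariant Hessian of $v(d)$ has radial eigenvalue $v''$ and tangential eigenvalues $v'\coth r$. In the Euclidean case these are $v''$ and $v'/r$. Iterating, $|\nabla^j_g u|$ is controlled by sums of the form $\sum_{i\le j}|v^{(i)}|\,c_{ij}(r)$. Here each $c_{ij}$ is a combination of products of $\coth r$ and its derivatives, which agree with the Euclidean coefficients built from $1/r$ up to bounded smooth corrections. This is because $\coth r-1/r$ is smooth and bounded on $[0,R]$, and so are all its derivatives. The hyperbolic and Euclidean norms then differ by terms of the form $\int |v^{(i)}|^p r^{-(j-i)p}r^{N-1}\,\mathrm dr$ multiplied by bounded factors.

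The main obstacle is controlling these singular lower-order cross terms near $r=0$ in both directions, uniformly in $p\ge1$. They are already present in the Euclidean norm, but the corrections must be absorbed without circularity. My first approach is to write $\coth r=1/r+h(r)$ with $h$ smooth and odd. Expanding the iterated covariant derivatives then gives
\[
\nabla^j_g u=\nabla^j_{\mathrm{Euc}}\tilde u+\sum_{i<j}(\text{bounded})\cdot r^{-(j-1-i)}v^{(i)},
\]
so each correction is one order less singular than the Euclidean terms. Near the origin, the Hardy inequality on $(0,R)$ combined with an induction on $j$ should bound these by $\|\tilde u\|_{W^{j-1,p}}$, and symmetrically in the other direction. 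If a direct Hardy argument fails at the borderline $p=1$, I would instead use a density argument. Radial $C^\infty$ functions up to the boundary are dense in both spaces, so it suffices to prove the norm equivalence for smooth radial functions, where the expansions are exact and the Taylor parity of $v$ at $0$ removes the apparent singularities.
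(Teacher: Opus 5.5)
Your proposal is correct, but it takes a different route from the paper.

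\textbf{The paper's route.} In the paper, Theorem~\ref{theoC} is the case $\phi=\sinh$ of Theorem~\ref{theo11}, and that proof never compares the hyperbolic and Euclidean radial Sobolev spaces.
\begin{itemize}
\item It uses only the one-sided inequality $|\nabla^j u|_g\ge |v^{(j)}|$. This is nearly free in geodesic polar coordinates, because $(\nabla^j u)_{1\cdots 1}=v^{(j)}$.
\item It then passes to $W^{k,p}((0,R),\phi^{N-1})\equiv W^{k,p}((0,R),t^{N-1})$ and imports the one-dimensional radial lemmas.
\item It obtains the endpoint in (2) by interpolating the pointwise bound with the unweighted embedding $W^{k,p}(M)\hookrightarrow L^{Np/(N-kp)}(M)$ on the compact manifold with boundary.
\end{itemize}
No reverse inequality is ever needed. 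This matters, because the reverse embedding from the one-dimensional space fails for $N\le (k-1)p$ (Proposition~\ref{propc}).

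\textbf{Your route.} You transfer all of Theorem~\ref{theoDSM} at once through $u\mapsto\tilde u$. Note that (2)--(4) only use the direction $\|\tilde u\|_{W^{k,p}_{\mathbb R}}\le C\|u\|_{W^{k,p}_{\mathbb H}}$.

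Your Hardy step does close where it is needed. The cross terms are $\lesssim\sum_{1\le i<j}|v^{(i)}|r^{-(j-1-i)}$, so Hardy requires $N>(k-2)p$. This holds whenever $N\ge kp$, including $p=1$.

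\textbf{A simpler way to get the isomorphism.} The singular cross terms are only an artifact of the polar frame. Since $\tilde u=u\circ\exp_o$, in Cartesian normal coordinates
\[
g_{ij}(x)=\frac{\sinh^2|x|}{|x|^2}\,\delta_{ij}+\left(1-\frac{\sinh^2|x|}{|x|^2}\right)\frac{x_ix_j}{|x|^2}
\]
is smooth on $\overline{B_R}$ and uniformly equivalent to $\delta_{ij}$. Two consequences follow.
\begin{itemize}
\item $\nabla^j_g u-\partial^j u$ is a combination of the $\partial^i u$, $i<j$, with bounded smooth coefficients.
\item $dV_g=(\sinh|x|/|x|)^{N-1}\mathrm dx$.
\end{itemize}
So the two $W^{k,p}$ norms are equivalent for all functions and all $N,k,p$, with no Hardy inequality at all.

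Use this argument rather than your fallback. Density plus Taylor parity shows that each individual smooth function is fine, but it gives no uniform constant.

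\textbf{Trade-off.} Done this way, your argument is shorter and gets the endpoint and compactness directly from the Euclidean theorem. The paper's argument instead isolates the one-dimensional mechanism and is phrased intrinsically for general warping functions. It also produces the radial lemmas \eqref{rls} and \eqref{rlsc} in terms of $\phi$ as by-products.
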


The following radial lemmas were established to prove the previous theorem. If $N>kp$, then there exists $C>0$ such that for all $u\in W^{k,p}_{\mathbb H,\mathrm{rad}}(B_R^\mathbb H)$,

\begin{equation*}
|u(x)|\leq C\dfrac{\|u\|_{W^{k,p}_{\mathbb H}(B_R^{\mathbb H})}}{\sinh^{\frac{N-kp}{p}}d(x)},\quad\mbox{a.e. } x\in\overline{B_R^{\mathbb H}}\backslash\{0\}.
\end{equation*}
If $N=kp$ and $p>1$, then there exists $C>0$ such that for all $u\in W^{k,p}_{\mathbb H,\mathrm{rad}}(B_R^{\mathbb H})$,
\begin{equation*}
|u(x)|\leq C\|u\|_{W^{k,p}_{\mathbb H}(B_R^{\mathbb H})}\left[\left(\log\frac{\tanh(\frac{R}{2})}{\tanh(\frac{d(x)}{2})}\right)^{\frac{p-1}p}+1\right],\quad\mbox{a.e. } x\in \overline{B_R^{\mathbb H}}\backslash\{0\}.
\end{equation*}

In the case where $B_R^{\mathbb H}$ is the entire hyperbolic space $\mathbb H^N$ (that is, $R=\infty$), we also established a Sobolev embedding result, as presented in \cite[Theorem 1.4]{MR4959099}. The theorem is stated as follows.

\begin{theorem}\label{theoD}
Assume $\theta\geq0$, $p\geq1$ , and $k\geq1$ an integer.
\begin{flushleft}
$\mathrm{(1)}$ \justifying{Every function $u\in W^{k,p}_{\mathbb H,\mathrm{rad}}(\mathbb H^N)$ is almost everywhere equal to a function $U$ in $C^{k-1}(\mathbb H^N\backslash\{0\})$. In addition, all partial derivatives of $U$ of order $k$ (in the classical sense) exist almost everywhere for $d(x)\in(0,\infty)$.}

\noindent$\mathrm{(2)}$ If $N>kp$, then the following continuous embedding holds:
\begin{equation*}
W^{k,p}_{\mathbb H,\mathrm{rad}}(\mathbb H^N)\hookrightarrow L^q_{\sinh^\theta}(\mathbb H^N) \quad \text{if} \quad p\leq q\leq \dfrac{(\theta+N)p}{N-kp}=:p^*.
\end{equation*}
Moreover, it is a compact embedding if one of the following two conditions is fulfilled:
\begin{flushleft}
$\mathrm{(i)}$ $\theta=N-1$ and $p<q<p^*$;

\noindent$\mathrm{(ii)}$ $\theta<N-1$ and $p\leq q<p^*$.
\end{flushleft}
\noindent$\mathrm{(3)}$ \justifying{If $N=kp$, then the following continuous embedding holds:}
\begin{equation*}
W^{k,p}_{\mathbb H,\mathrm{rad}}(\mathbb H^N)\hookrightarrow L^q_{\sinh^\theta}(\mathbb H^N)\quad \text{if}\quad p\leq q<\infty.
\end{equation*}
Moreover, it is compact embedding if one of the following two conditions is fulfilled:
\begin{flushleft}
\noindent$\mathrm{(i)}$ $\theta=N-1$ and $q>p$;

\noindent$\mathrm{(ii)}$ $\theta<N-1$ and $q\geq p$.
\end{flushleft}
\end{flushleft}
\end{theorem}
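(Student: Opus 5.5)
We reduce Theorem \ref{theoD} to a one-dimensional weighted problem in geodesic polar coordinates. For a radial $u(x)=v(d(x))$ on $\mathbb H^N$ we have $\mathrm dx=\omega_{N-1}\sinh^{N-1}r\,\mathrm dr\,\mathrm d\sigma$. We first express the covariant derivatives $\nabla^j u$, $j\le k$, in terms of $v,v',\dots,v^{(j)}$ with coefficients built from $\coth r$ and its derivatives. We then show that $\|u\|_{W^{k,p}_{\mathbb H}(\mathbb H^N)}$ is equivalent to $\sum_{j\le k}\big(\int_0^\infty |v^{(j)}|^p\sinh^{N-1}r\,\mathrm dr\big)^{1/p}$. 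For $r\ge1$ this is easy, since $\coth r$ and its derivatives are bounded. Near $r=0$ we use Theorem \ref{theoC} with $R=1$, which already controls the behaviour on $B_1^{\mathbb H}$.

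Item (1) then follows from item (1) of Theorem \ref{theoC} applied on every ball $B_R^{\mathbb H}$. The $C^{k-1}$ representative is unique a.e., so the representatives on nested balls patch together.

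\textbf{Continuous embeddings.} Split $\mathbb H^N=B_1^{\mathbb H}\cup(\mathbb H^N\setminus B_1^{\mathbb H})$.
\begin{itemize}
\item On $B_1^{\mathbb H}$, items (2)--(3) of Theorem \ref{theoC} give $\|u\|_{L^q_{\sinh^\theta}(B_1)}\le C\|u\|_{W^{k,p}}$ for $q\le p^*$ when $N>kp$, and for all $q<\infty$ when $N=kp$.
\item At infinity we prove a Strauss-type decay in the spirit of \eqref{dlrn}. For $r\ge1$,
\[
|v(r)|^p\le p\int_r^\infty |v|^{p-1}|v'|\,\mathrm ds\le \frac{p}{\sinh^{N-1}r}\int_r^\infty|v|^{p-1}|v'|\sinh^{N-1}s\,\mathrm ds,
\]
using that $\sinh$ is increasing. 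Hölder then gives $|u(x)|\le C\|u\|_{W^{1,p}}\sinh^{-(N-1)/p}d(x)$. Here $v(r)\to0$ along a sequence because $\int|v|^p\sinh^{N-1}<\infty$.
\item Consequently, for $q\ge p$,
\[
\int_{d>1}|u|^q\sinh^\theta d\,\mathrm dx\le C\|u\|^{q-p}_{W^{1,p}}\int_1^\infty|v|^p\sinh^{\theta+N-1-(q-p)(N-1)/p}r\,\mathrm dr.
\]
This is bounded by $C\|u\|^q$ provided the exponent $\theta-(q-p)(N-1)/p\le0$.
\end{itemize}
This last condition is the main obstacle, since the theorem allows any $\theta\ge0$ with $q\ge p$. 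I expect it forces a better decay than $\sinh^{-(N-1)/p}$. For radial $W^{1,p}$ functions on $\mathbb H^N$ the natural rate is exponential, $e^{-(N-1)r/p}$. Since $\sinh^\theta r\approx e^{\theta r}$, the integral $\int|u|^q\sinh^\theta$ would still diverge for large $\theta$ with $q=p$. So I would re-check the precise weight convention, in particular whether the weight is the one used in \cite{MR4959099} and effectively bounded at infinity. I would then match the decay exponent to the admissible range and use the exponential decay of $v$ together with the Poincaré inequality on $\mathbb H^N$, which controls $\|v\|_p$ by $\|v'\|_p$ via the gap $\lambda_1=((N-1)/p)^p$. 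If needed, I would iterate the radial lemma on higher derivatives to gain decay from $k>1$.

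\textbf{Compactness.} Take a bounded sequence $u_n\rightharpoonup u$. On $B_R^{\mathbb H}$ we get strong convergence in $L^q_{\sinh^\theta}$ for $q<p^*$ from Theorem \ref{theoC} (items (2)--(3), with the subcritical compactness obtained by interpolation). The tail outside $B_R^{\mathbb H}$ is bounded by
\[
C\sup_n\|u_n\|^{q}\,\sup_{r>R}\sinh^{\theta-(q-p)(N-1)/p}r,
\]
which tends to $0$ as $R\to\infty$ exactly when $\theta<(q-p)(N-1)/p$. A diagonal argument in $R$ then yields strong convergence on all of $\mathbb H^N$.
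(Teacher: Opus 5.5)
Theorem \ref{theoD} is only quoted in this paper. The paper's own argument is the proof of Theorem \ref{theo12} with $\phi=\sinh$, $C_\phi=1$, and your skeleton is the same: Strauss-type decay, a ball handled by the bounded-radius theorem, and a tail controlled by the decay. The gap is the one you flag yourself, and it cannot be closed.

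Your tail bound gives continuity only for $\theta\le (N-1)(q-p)/p$ and compactness only for $\theta<(N-1)(q-p)/p$. The statement claims more in three places:
\begin{itemize}
\item continuity for every $\theta\ge0$ already at $q=p$ (you get only $\theta=0$);
\item compactness under (i) for all $q\in(p,p^*)$ (you need $q>2p$);
\item compactness under (ii) at $q=p$ (you would need $\theta<0$).
\end{itemize}

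Neither the Poincar\'e inequality nor higher derivatives can supply extra decay, because your bound is sharp for the measure $\sinh^\theta d(x)\,\mathrm dV_g$. Take $v$ smooth, constant near $0$, with $v(r)=e^{-ar}$ for $r\ge1$ and $\frac{N-1}{p}<a\le\frac{\theta+N-1}{q}$; such an $a$ exists exactly when $\theta>(N-1)(q-p)/p$. All covariant derivatives of $d$ are bounded on $\{d\ge1\}$, so $u=v\circ d$ lies in $W^{k,p}_{\mathbb H,\mathrm{rad}}(\mathbb H^N)$ for every $k$. Yet $\int|u|^q\sinh^\theta d(x)\,\mathrm dV_g\ge c\int_1^\infty e^{(\theta+N-1-aq)r}\,\mathrm dr=\infty$.

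Likewise, let $u_n=c_n\psi(d(x)-n)$ with $\psi\in C_c^\infty(-1,1)$ and $c_n$ chosen so that $\|u_n\|_{L^p(\mathbb H^N)}=1$. This sequence is bounded in $W^{k,p}$ and vanishes on any fixed ball for large $n$. Hence it has no $L^p$-convergent subsequence, which contradicts (ii) for $\theta=0$, $q=p$.

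The paper obtains the stated range only through \eqref{rews}. Its tail exponent is $\frac{(N-1)(q-p)}{p}+N-1-\theta$ instead of your $\frac{(N-1)(q-p)}{p}-\theta$, which yields the criterion $p\theta<q(N-1)$ containing (i)--(ii). The extra $N-1$ amounts to dropping the volume density from the weighted integral, and two steps fail:
\begin{itemize}
\item The last integral in \eqref{rews} equals $\omega_{N-1}\int_R^\infty|v|^p\phi^{2(N-1)}\,\mathrm dr$, which $\|u\|_{W^{1,p}(M)}$ does not control.
\item The comparison $\phi^\theta(r)\lesssim\phi^{\theta-(N-1)}(R)\phi^{N-1}(r)$ for $r\ge R$ requires $\theta\le N-1$.
\end{itemize}
The estimate is valid, when $\theta\le N-1$, for the one-dimensional integrals $\int_R^\infty|v|^q\phi^\theta\,\mathrm dt$ of Theorem \ref{theo122}. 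It is not valid for $\int|u|^q\phi^\theta\,\mathrm dV_g$.

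So your bookkeeping is the correct one, and the right conclusion is a corrected statement rather than a hunt for extra decay. With $\mathrm dV_g$, your argument proves continuity for $p\le q\le p^*$ with $\theta\le(N-1)(q-p)/p$. It also proves compactness when $q<p^*$ and $\theta<(N-1)(q-p)/p$. Both ranges are sharp: use the first example, and the same bumps at $\theta=(N-1)(q-p)/p$.

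Conditions (i)--(ii) instead correspond to measuring the tail against $\sinh^\theta(r)\,\mathrm dr\,\mathrm d\sigma$. Even then, continuity holds only for $p\theta\le q(N-1)$, not for every $\theta\ge0$.
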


To prove this embedding, it was necessary to develop the following decay lemma. For any $u\in W^{1,p}_{\mathbb H,\mathrm{rad}}(\mathbb H^N)$, the following inequality holds:
\begin{equation}\label{dlhn}
|u(x)|\leq \left(\frac{p}{\omega_{N-1}}\right)^{\frac1p}\|u\|_{L^p(\mathbb H^N)}^{\frac{p-1}p}\|\nabla u\|_{L^p(\mathbb H^N)}^{\frac1p}\sinh^{\frac{1-N}p} d(x),\quad\mbox{a.e. } x\in\mathbb H^N\backslash\{0\}.
\end{equation}

Without the radial symmetry and with $\theta=0$, the borderline case $N=kp$ allows an improvement of the power-type embeddings in item (3) of Theorems \ref{theoDSM}, \ref{theoB}, \ref{theoC}, and \ref{theoD}, where polynomial growth can be replaced by exponential growth. This is given by the Trudinger-Moser type inequalities, which correspond to the critical counterpart of Sobolev embeddings when the limiting exponent becomes infinite. In the Euclidean framework, these inequalities are classical; see, for instance, \cite{Cao,zbMATH03261965, CLZ-CBM, CLZ-JGEA, CLZ-CVPDE, doO, zbMATH03337983,MR2988207, LamLu-JDE, LamLu, LamLuZhang-Revista, MR1646323,MR2400264,zbMATH04099653, RufSani}.

Analogous improvements hold in non-Euclidean geometries. For $N=2$, Mancini and Sandeep \cite{zbMATH05839583} established a sharp Moser-Trudinger inequality on conformal discs.  Mancini, Sandeep, and Tintarev \cite{zbMATH06214509} proved a sharp Moser-Trudinger inequality on hyperbolic space for all $N\geq2$. Lu and Tang \cite{zbMATH06256981} derived both critical and subcritical Trudinger-Moser inequalities on bounded domains and on the whole hyperbolic space, including singular versions. They also obtained sharp inequalities of exact growth in hyperbolic space \cite{zbMATH06579881}. Further sharp Hardy-Trudinger-Moser inequalities on hyperbolic space were obtained by Wang and Ye \cite{zbMATH06029076}, Nguyen \cite{MR3759470, Nguyen-TAMS} and by Liang et al. \cite{zbMATH07247135}.

For higher-order operators, Adams-type inequalities in hyperbolic space were established by Karmakar and Sandeep \cite{zbMATH06536855}, and by Ngô and Nguyen \cite{zbMATH07318489}. Sharp Hardy-Adams inequalities on real hyperbolic spaces were esablished by Li, Lu, and Yang \cite{zbMATH06776234, zbMATH06898920, zbMATH07189073} using the ideas of applying Helgason-Fourier analysis on symmetruc spaces to prove sharp geometric inequalities developed by Lu and Yang \cite{LuYang-AJM}.
Extensions to other rank-one symmetric spaces were later obtained by Lu and Yang in complex hyperbolic space \cite{zbMATH07557162}, and by Flynn, Lu, and Yang in quaternionic and octonionic hyperbolic spaces \cite{zbMATH07835927}. Finally, optimal Trudinger-Moser and Adams type inequalities on more general Riemannian manifolds can also be found, for example, in \cite{zbMATH00490628, ChenW, LiLu-arXiv, LiLuZhu-ANS, YXLi1, YXLi2, YXLi3, YXLi4, Yang-JFA,  zbMATH00447008, YangQH, zbMATH07384223}, just to name a few.

\subsection{Spherically Symmetric Riemannian Manifolds} The Spherically symmetric Riemannian manifolds were crucial in the geometric analysis associated with the positive (ADM) mass theorem in general relativity. Early proofs of the theorem in the spherically symmetric setting were obtained by Jang \cite{MR403545}, Leibovitz \cite{MR1552569}, and Misner \cite{misner1964relativistic}, where the notion of mass is given by the ADM energy-momentum introduced by Arnowitt, Deser, and Misner \cite{MR127946}. The theorem was later established in full generality in three dimensions through independent works of Schoen and Yau \cite{MR526976} and Witten \cite{MR626707}; see also \cite{MR849427,MR994021} for related developments. More recently, Lee and Sormani \cite{MR3176604} investigated the stability of the positive mass theorem for a class of $N$-dimensional spherically symmetric Riemannian manifolds. Their framework includes several physically relevant examples, involving Schwarzschild manifolds and classical spherically symmetric gravitational wells. We also remark that their analysis is restricted to asymptotically Euclidean geometries. However, in \cite{MR3691759}, Sakovich and Sormani considered the analogous problem in the setting of asymptotically hyperbolic spaces.

We begin by motivating the concept of spherically symmetric Riemannian manifolds (see Definition \ref{defissm}). Our focus is on studying radial functions on an $N$-dimensional ($N\geq2$) Riemannian manifold $(M,g)$. Let $o\in M$ (referred to as the origin) and denote $d(x)$ as the distance between $x$ and $o$. A function $u\colon M\to\mathbb R$ is called \textit{radial} if depends only on $d(x)$, i.e.,
\begin{equation*}
d(x)=d(y)\Longrightarrow u(x)=u(y).
\end{equation*}
This implies the existence of a function $v\colon[0,\sup_{x\in M}d(x))\to\mathbb R$ such that $u(x)=v(d(x))$. Assuming $u$ is integrable, we aim to make use of an identity analogous to the following one, which holds in the Euclidean space:
\begin{equation}\label{layercakeRn}
\int_{B_R(o)}u(x)\mathrm dx=\omega_{N-1}\int_0^Rv(t)t^{N-1}\mathrm dt.
\end{equation}

For a general Riemannian manifold, we can derive a similar expression, at least locally. Let $B_R(o)$ be a normal ball of radius $R\in(0,\infty]$. Then, the exponential map

\begin{equation*}
\exp_o\colon B_R(o)\subset T_oM\to M
\end{equation*}
is a diffeomorphism onto its image. Under the induced metric $\exp_o^*g$ on $B_R(o)$, the map $\exp_o$ becomes an isometry. By Gauss' Lemma (see \cite[2.93 Gauss Lemma]{MR2088027}), the metric $g$ can be expressed in polar coordinates as:
\begin{equation*}
g=\mathrm dr^2+h_{ij}(r,\theta)\mathrm d\theta^i\mathrm d\theta^j,
\end{equation*}
where $\mathrm dr$ is the 1-form associated with the radial vector field and $h_{ij}(r,\theta)\mathrm d\theta^i\mathrm d\theta^j$ represents the angular components of the metric. Equivalently, in matrix form:
\begin{equation*}
g=\begin{bmatrix}
1 & 0\quad\cdots\quad0 \\
\begin{matrix}
0\\
\vdots\\
0
\end{matrix} & \boxed{
\begin{matrix}
 &  &  \\
 & h_{ij}(r,\theta) &  \\
 &  & 
\end{matrix}}
\end{bmatrix}
\end{equation*}

The volume element associated with $g$ is given by 
\begin{equation*}
\mathrm dV_g=\sqrt{\det(h_{ij}(r,\theta))}\mathrm dr\mathrm d\theta,
\end{equation*}
where $\mathrm d\theta=\mathrm d\theta^1\wedge\cdots\wedge\mathrm d\theta^{N-1}$ is the volume element on the unit sphere $\mathbb S^{N-1}$. Using this, we obtain the following generalization of \eqref{layercakeRn} for any Riemannian manifold $(M,g)$:
\begin{equation}\label{layercakehard}
\int_{B_R(o)}u(x)\mathrm dV_g=\int_0^Rv(t)\int_{\mathbb S^{N-1}}\sqrt{\det(h_{ij}(t,\theta))}\mathrm d\theta\mathrm dt.
\end{equation}
However, the integral over $\mathbb S^{N-1}$ in \eqref{layercakehard} lacks the desired properties needed for our arguments. To address this, we assume that $h_{ij}(r,\theta)=\phi^2(r)\delta_{ij}$, where $\delta_{ij}$ is the Kronecker delta. This assumption implies that the angular components are independent of $\theta$. Geometrically, this corresponds to assuming that the metric is ``radial" around $o$. With this simplification, we get
\begin{equation}\label{layercake}
\int_{B_R(o)}u(x)\mathrm dV_g=\omega_{N-1}\int_0^Rv(t)\phi^{N-1}(t)\mathrm dt.
\end{equation}
With these considerations, we now adopt the notion of \textit{model manifold} as given in \cite[Definition 3.21]{MR2569498}. Throughout this work, however, we will refer to such manifolds as \textit{spherically symmetric Riemannian manifolds}. We emphasize that, in contrast with several standard definitions in the literature, our framework does not require the manifold to possess a pole. In particular, the exponential map at the origin need not be defined on the entire tangent space, allowing the manifold to be bounded in this sense.
\begin{defi}\label{defissm}
An $N$-dimensional Riemannian manifold ($M,g)$ is called a spherically symmetric Riemannian manifold if the following two conditions are satisfied:
\begin{enumerate}
\item There is a chart on $M$ that covers all $M$, and the image of this chart in $\mathbb{R}^N$ is a ball
\begin{equation*}
B_{R}=\left\{x \in \mathbb{R}^N:|x|<R\right\}
\end{equation*}
of radius $R\in(0,+\infty]$. In particular, if $R=\infty$, then $B_{R}=\mathbb{R}^N$.
\item The metric $g$ in the polar coordinates $(r, \theta)$ with respect to the origin $o$ in the above chart has the form
\begin{equation}\label{gSSRM}
g=\mathrm d r^2+\phi^2(r)\widetilde g,
\end{equation}
where $\widetilde g$ is the standard metric in $\mathbb S^{N-1}$ and $\phi$ is a nonnegative smooth function on $[0,R)$, positive on $(0,R)$, satisfying
\begin{equation}\label{fsmooth}
\phi'(0)=1\mbox{ and }\phi^{(i)}(0)=0\mbox{ for all }i\geq0\mbox{ even}.
\end{equation}
\end{enumerate}
\end{defi}
Throughout the paper, $M$ always denotes a spherically symmetric Riemannian manifold. Moreover, we denote by $o$ the \textit{origin}, which corresponds to $0\in\mathbb R^N$ via the chart. In particular, $M\backslash\{o\}$ is the warped product $(0,R)\times_\phi\mathbb S^{N-1}$. For a background on warped products, we refer to \cite[Section 3]{MR3699316}. For notation convenience, we shall simply write
\begin{equation*}
M\backslash\{o\}=(0,R)\times\mathbb S^{N-1}\mbox{ and }M=B_R,
\end{equation*}
with the understanding that these identifications are made via the chart and polar coordinates. Consequently, any point $x\in M\backslash\{o\}$ is uniquely represented as $(r,\theta)\in(0,R)\times\mathbb S^{N-1}$.

The condition \eqref{fsmooth} is necessary and sufficient to extend the metric \eqref{gSSRM} smoothly to the origin. For simplicity, we assume smoothness throughout the paper. However, to work in $W^{k,p}(M)$, it is sufficient that the manifold $M$ is of class $C^k$. In this case, the optimal regularity assumption on $\phi$ is that the functions
\begin{equation*}
x\mapsto\dfrac{\phi^2(|x|)}{|x|^2}\mbox{ and }x\mapsto\dfrac{|x|^2-\phi^2(|x|)}{|x|^4},
\end{equation*}
defined on $B_R\backslash\{0\}$, extend as $C^k$ functions at the origin. For a detailed explanation of this equivalence, we refer \cite[Section 1.4.4]{MR3469435}. In some results, we additionally assume $R<\infty$ and that the limit $\lim_{r\to R}\phi^{(j)}(r)$ exists for every $j\geq0$. These assumptions are required to extend the metric smoothly up to $r=R$ and imply that $M$ can be identified with the interior of a compact manifold with boundary.

For further details on this type of space, we recommend consulting \cite{MR521983}, \cite[Section 3.10]{MR2569498}, \cite[Sections 4.2 and 4.3]{MR3469435}, and \cite[Sections 11–14 in Chapter VII]{MR1013365}. By solving the ODE presented in \cite[Equation (7.25)']{MR1013365}, we obtain the following examples of spherically symmetric Riemannian manifolds:
\begin{itemize}
\item $M=\mathbb R^N$ with $R=\infty$ and $\phi(r)=r$;
\item $M=\mathbb S^N\backslash\{-o\}$ with $R=\pi$ and $\phi(r)=\sin r$;
\item $M=\mathbb H^N$ with $R=\infty$ and $\phi(r)=\sinh r$.
\end{itemize}
An additional example of a spherically symmetric Riemannian manifold is a surface of revolution, as discussed in \cite[Example 1.4.4]{MR3469435}.

\subsection{Main Results}

We say that $u\colon M\to\mathbb R$ is \textit{radial} if there exists $v\colon(0,R)\to\mathbb R$ such that $u(x)=v(r)$, for all $x=(r,\theta)\in M\backslash\{o\}=(0,R)\times\mathbb S^{N-1}$. Define $W^{k,p}((0,R),\phi^{N-1})$ the radial weighted Sobolev space given by
\begin{equation*}
\left\{v\colon(0,R)\to\mathbb R\mbox{ has }k\mbox{ weak derivatives}\colon \int_0^R|v^{(j)}(t)|^p\phi^{N-1}(t)\mathrm dt<\infty,\ \forall j=0,\ldots,k\right\}.
\end{equation*}
Equipped with the norm

\begin{equation*}
\|v\|_{W^{k,p}_{\phi^{N-1}}}=\left(\int_0^R|v^{(j)}(t)|^p\phi^{N-1}(t)\mathrm dt\right)^{\frac1p},
\end{equation*}
the space $W^{k,p}((0,R),\phi^{N-1})$ is a Banach space.

Our first main theorem is a technical yet crucial result for establishing the desired embeddings. The statement itself is quite simple. However, the proof is not straightforward in the Euclidean and hyperbolic settings. In fact, the Euclidean proof required approximately two pages (see \cite[Proof of Theorem 2.2]{MR2838041}), while the hyperbolic case used about seven pages (see \cite[Proof of Theorem 1.1]{MR4959099}). By contrast, our new argument is contained in roughly one page. The main simplification arises from our interpretation of the spherically symmetric Riemannian manifold via geodesic polar coordinates, rather than the Cartesian coordinates employed in previous works. This approach is both simpler and more general.

\begin{theo}\label{theouv}
Let $u\in W^{k,p}_{\mathrm{rad}}(M)$, then $v\in W^{k,p}((0,R),\phi^{N-1})$. Moreover, for a.e. $x=(r,\theta)\in M$
\begin{equation}\label{d456}
|\nabla^j u(x)|_g\geq |v^{(j)}(r)|,\quad \forall j=0,\ldots,k.
\end{equation}
\end{theo}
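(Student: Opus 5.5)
Work on $M\setminus\{o\}=(0,R)\times\mathbb S^{N-1}$ in geodesic polar coordinates, where $g=\mathrm dr^2+\phi^2(r)\widetilde g$. Since $\{o\}$ has measure zero, all integrals over $M$ may be computed there, and by \eqref{layercake}
\begin{equation*}
\int_M|w(x)|\,\mathrm dV_g=\omega_{N-1}\int_0^R|f(t)|\phi^{N-1}(t)\,\mathrm dt
\end{equation*}
for any radial $w=f(r)$. Given $u\in W^{k,p}_{\mathrm{rad}}(M)$, the first step is to identify the weak derivatives of $v$. For $\psi\in C_c^\infty(0,R)$, the function $\Psi(x)=\psi(r)$ is smooth with compact support in $M\setminus\{o\}$. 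Testing the weak derivative of $u$ against the vector field $\Psi\,\partial_r$ and using $\operatorname{div}_g(\psi\partial_r)=\phi^{1-N}(\psi\phi^{N-1})'$ gives the following. The function $\langle\nabla u,\partial_r\rangle_g$ is radial, say equal to $w_1(r)$ (average over $\theta$ if needed; radial symmetry of $u$ and rotation invariance of $g$ make it genuinely radial). Then for every $\eta\in C_c^\infty(0,R)$,
\begin{equation*}
-\int_0^R v\,\eta'\,\mathrm dt=\int_0^R w_1\,\eta\,\mathrm dt,
\end{equation*}
after writing $\eta=\psi\phi^{N-1}$. This is legitimate because $\phi>0$ on $(0,R)$, so $\phi^{1-N}$ is smooth there. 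Hence $v'=w_1$ weakly on $(0,R)$, and $v\in W^{1,1}_{\mathrm{loc}}(0,R)$.

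Next, iterate using the structure of the Hessian. In polar coordinates $\nabla_{\partial_r}\partial_r=0$, since radial lines are unit-speed geodesics by Gauss' lemma. It follows that
\begin{equation*}
\nabla^j u(\partial_r,\dots,\partial_r)=\partial_r^j u
\end{equation*}
holds for smooth $u$. Before iterating, approximate $u$ by smooth radial functions in $W^{k,p}(M)$: average a standard approximating sequence over the isometric $O(N)$-action fixing $o$, which preserves $W^{k,p}$ norms and commutes with $\nabla$. Passing to the limit, $\nabla^ju(\partial_r,\dots,\partial_r)$ is radial and equals the weak derivative $v^{(j)}(r)$. The limit passage uses the one-dimensional identity above, applied inductively with $v$ replaced by the lower radial derivatives.

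For the pointwise estimate \eqref{d456}, $\partial_r$ is a unit vector. Therefore
\begin{equation*}
|v^{(j)}(r)|=|\nabla^ju(x)(\partial_r,\dots,\partial_r)|\le|\nabla^ju(x)|_g
\end{equation*}
for a.e.\ $x$, since the norm of a $j$-tensor dominates its value on unit vectors. Integrating $|v^{(j)}|^p\phi^{N-1}$ and using \eqref{layercake} gives
\begin{equation*}
\|v\|_{W^{k,p}_{\phi^{N-1}}}\le\omega_{N-1}^{-1/p}\|u\|_{W^{k,p}(M)},
\end{equation*}
so $v\in W^{k,p}((0,R),\phi^{N-1})$.

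The main obstacle is justifying the radial symmetrization and the identification $\nabla^ju(\partial_r,\dots,\partial_r)=v^{(j)}$ at the level of weak derivatives, i.e.\ the smooth radial approximation. I would handle it by working locally on compact subsets of $(0,R)$, where $\phi$ is bounded below. There the weighted and unweighted one-dimensional norms are equivalent, and $L^p$ convergence on $M$ implies $L^p_{\mathrm{loc}}$ convergence of the radial profiles by \eqref{layercake}. This suffices to pass to the limit in the integration-by-parts identities.
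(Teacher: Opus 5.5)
Your proof is correct, and its two core ingredients are the same as the paper's:
\begin{itemize}
\item the smooth identity $\nabla^ju(\partial_r,\dots,\partial_r)=\partial_r^ju$, which comes from $\nabla_{\partial_r}\partial_r=0$ (the paper's $\Gamma^\alpha_{11}=0$);
\item the pointwise domination, which you obtain coordinate-free from an orthonormal frame containing $\partial_r$, where the paper uses that $g^{ij}$ is diagonal with $g^{11}=1$.
\end{itemize}

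The genuine difference is how you pass from smooth to Sobolev $u$. The paper builds no radial approximants. It proves the weighted integration-by-parts identity \eqref{eqdivtheo} from the divergence of $w\,\phi^{1-N}\partial_r$, applies it $k$ times to move $\partial_r^k$ from $\psi(x)=\varphi(r)$ onto $u$, and then reduces to $(0,R)$ by \eqref{layercake}. Your $k=1$ step is the same divergence computation, with the vector field written as $\psi\partial_r$ and $\psi=\eta\phi^{1-N}$.

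You instead average a $\mathfrak C^{k,p}(M)$ approximating sequence over the isometries $(r,\theta)\mapsto(r,A\theta)$, $A\in O(N)$, and pass to the limit on compact subintervals of $(0,R)$. Once these radial approximants exist, your $k=1$ divergence step is actually redundant. The identity $\int_0^Rv_n\eta^{(j)}\,\mathrm dt=(-1)^j\int_0^Rv_n^{(j)}\eta\,\mathrm dt$ passes to the limit directly for every $j\le k$.

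The paper's route is shorter and uses nothing beyond the definition of $W^{k,p}(M)$ as a completion. However, its final step evaluates $\partial_r^ku$ at a fixed $\theta_0$, which tacitly assumes that $(\nabla^ku)_{1\cdots1}$ is radial. Without that, one only identifies $v^{(k)}$ with the spherical average of $(\nabla^ku)_{1\cdots1}$. That suffices for the norm bound, via Jensen, but not for the a.e.\ inequality \eqref{d456}. Your approach gives radiality for free, since the limit is an $L^p$ limit of radial functions.

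The one estimate you should write out is $\|\widetilde u_n-u\|_{W^{k,p}(M)}\le\|u_n-u\|_{W^{k,p}(M)}$. It follows from Minkowski's integral inequality together with $A^*\nabla^ju=\nabla^ju$, which is a consequence of $u\circ A=u$. Note that each rotation preserves the $W^{k,p}$ norm, but the averaging operator itself is only a contraction.
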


This theorem is significant only in the context of higher-order derivatives. Indeed, for a general Riemannian manifold $M$ (not necessarily spherically symmetric), it holds that $|\nabla d(x)|_g=1$ for all $x\in M\backslash(\{o\}\cup\mathrm{Cut}(o))$, where $\mathrm{Cut}(o)$ denotes the cut locus of $o$. Consequently,
\begin{equation*}
|\nabla u(x)|_g=|v'(d(x))|,\quad\mbox{a.e. }x\in M\backslash(\{o\}\cup\mathrm{Cut}(o)).
\end{equation*}
Therefore, in the case $k=1$, the statement of the theorem is immediate and does not provide any additional information. Remarks concerning the higher-order derivative setting on a general $N$-dimensional Riemannian manifold are provided in the final section, see Section \ref{sec7}.

Motivated by Theorem \ref{theouv}, a natural question arises: if $u$ belongs to $W^{k,p}_{\mathrm{rad}}(M)$, does $v$ belong to $W^{k,p}((0,R),\phi^{N-1})$ and vice-versa? Our next main theorem addresses this question.
\begin{theo}\label{theo10}
For each $R\in(0,\infty]$, $p\geq1$, and $k\geq1$ integer, we have
\begin{flushleft}
    $\mathrm{(1)}$ $W^{k,p}_{\mathrm{rad}}(M)\hookrightarrow W^{k,p}((0,R),\phi^{N-1})$;
    
    \noindent$\mathrm{(2)}$ $W^{1,p}_{\mathrm{rad}}(M)\equiv W^{1,p}((0,R),\phi^{N-1})$;
    
    \noindent$\mathrm{(3)}$ Assume that $R\in(0,\infty)$ and $k\geq2$. Suppose that
    \begin{equation*}
    0<\liminf_{r\to R}\phi(r)\leq\limsup_{r\to R}\phi(r)<\infty,
    \end{equation*}
    and that $|\phi'(r)|$ as well as $|\frac{\mathrm d^i}{\mathrm dr^i}(\phi''(r)\phi(r))|$ are bounded on $r\in(0,R)$ for all $i=0,\ldots,k-3$. Then the following equivalence holds:
    \begin{equation*}
        W^{k,p}_{\mathrm{rad}}(M)\equiv W^{k,p}((0,R),\phi^{N-1})\Leftrightarrow N>(k-1)p.
    \end{equation*}
\end{flushleft}
\end{theo}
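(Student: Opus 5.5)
Items (1) and (2) come first; item (3) takes most of the work.

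\textbf{Items (1) and (2).} Item (1) follows from Theorem \ref{theouv}. By \eqref{d456} and the polar-coordinates identity \eqref{layercake},
\begin{equation*}
\int_0^R|v^{(j)}(t)|^p\phi^{N-1}(t)\,\mathrm dt=\frac1{\omega_{N-1}}\int_M|v^{(j)}(d(x))|^p\,\mathrm dV_g\le\frac1{\omega_{N-1}}\int_M|\nabla^ju|_g^p\,\mathrm dV_g
\end{equation*}
for every $j\le k$. Summing over $j$ gives $\|v\|_{W^{k,p}_{\phi^{N-1}}}\le\omega_{N-1}^{-1/p}\|u\|_{W^{k,p}(M)}$.

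For item (2) we also need the reverse direction. Take $v\in W^{1,p}((0,R),\phi^{N-1})$. Then $v$ is locally absolutely continuous on $(0,R)$, so $u=v\circ r$ is weakly differentiable on $M\setminus\{o\}$ with $\nabla u=v'(r)\partial_r$ and $|\nabla u|_g=|v'(r)|$. Integrating with \eqref{layercake} gives equality of the norms. It remains to show that the weak gradient has no singular part at $o$. For this I use a cutoff $\eta_\varepsilon$ supported near $o$ with $|\nabla\eta_\varepsilon|\lesssim1/\varepsilon$ and $\mathrm{vol}(B_\varepsilon)\sim\varepsilon^N$. For $N\ge2$ the point has zero $p$-capacity, or at least $\int|u||\nabla\eta_\varepsilon|\to0$ because $u\in L^1_{\rm loc}$ via Hölder and $\varepsilon^{N-1}$-area spheres. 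When $p\ge N$ the capacity argument is replaced by a logarithmic cutoff. This gives the isometric identification.

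\textbf{Item (3), the equivalence.} I compute $\nabla^ju$ in polar coordinates. Using the Christoffel symbols of $g=\mathrm dr^2+\phi^2\widetilde g$, the Hessian is
\begin{equation*}
\nabla^2u=v''\,\mathrm dr^2+v'\phi\phi'\,\widetilde g .
\end{equation*}
Inductively, $\nabla^ju$ has components of the form $\sum_{i=1}^{j}c_{i}(r)\,v^{(i)}(r)$. The coefficients $c_i$ are built from $\phi'$, $\phi''\phi$ and their derivatives, divided by powers of $\phi$; these are measured in the $g$-norm, where each angular slot contributes $\phi^{-1}$. So the most singular terms near $0$ behave like $v^{(i)}/r^{j-i}$, since $\phi(r)\sim r$.

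\emph{Near $R$.} The hypotheses bound $\phi$ above and below away from $0$ near $R$, and bound $\phi'$ and $(\phi''\phi)^{(i)}$. Hence all coefficients are bounded there, and $|\nabla^ju|\lesssim\sum_{i\le j}|v^{(i)}|$ on $(R/2,R)$.

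\emph{Near $0$.} The question reduces to a weighted Hardy inequality,
\begin{equation*}
\int_0^{R/2}\frac{|v^{(i)}|^p}{t^{(j-i)p}}\,t^{N-1}\,\mathrm dt\lesssim\|v\|^p_{W^{k,p}_{\phi^{N-1}}},
\end{equation*}
in particular for $i=1$, $j=k$. When $N>(k-1)p$ we have $N-1-(k-1)p>-1$, and I would obtain this by iterating the one-dimensional Hardy inequality $\int|w|^pt^{\alpha-p}\lesssim\int|w'|^pt^{\alpha}$, together with a boundary term at $R/2$. The Hardy inequality needs the right endpoint behaviour: for $\alpha-p+1>0$ one integrates from $0$ outward. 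I also expect a cancellation: the cumulative derivative combinations (like $v''-v'/r$ in Euclidean space) are the quantities that actually need controlling, and the condition $N>(k-1)p$ is exactly what makes the iterated Hardy steps with weight $t^{N-1}$ admissible. The regularity at $o$ of $u$ defined from $v$ then follows as in item (2).

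\emph{Necessity.} If $N\le(k-1)p$, I would construct $v$ with $v'(t)=t^{a}$ near $0$ (times a cutoff), with $a$ chosen so that $v^{(k)}\in L^p(t^{N-1})$ but $v'/t^{k-1}\notin L^p(t^{N-1})$. Since the pure-angular component of $\nabla^ku$ contains $v'\phi'^{\,k-1}/\phi^{k-1}\sim v'/t^{k-1}$, this gives $u\notin W^{k,p}$. Candidates for $v'$ are a logarithmic-type function, or $t^{a}$ with $a$ in a window; note $v'=t$ makes $v$ smooth in $x$, so non-polynomial powers or $\log$ are needed.

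\textbf{Main obstacle.} The main obstacle is the precise structure of the coefficients in $\nabla^ku$ near $0$ and the resulting Hardy chain. In particular, I need to verify that the combinations really reduce to the single condition $N>(k-1)p$ rather than something stricter. For necessity, the coefficient of $v'$ in the top angular component must not cancel with the other terms. I would first handle this by explicit computation in the Euclidean model $\phi=r$, then transfer the result using $\phi(r)=r+O(r^3)$.
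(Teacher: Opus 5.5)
Your items (1), (2) and the sufficiency half of (3) follow the paper's route: Theorem~\ref{theouv} with \eqref{layercake}, the identity $|\nabla u|_g=|v'|$, and the structural bound $|\nabla^ku|_g\lesssim\sum_{j=1}^k|v^{(j)}|/\phi^{k-j}$ followed by the iterated Hardy inequality of Proposition~\ref{prophardy}. No cancellation is needed there. The term with $v^{(j)}$ only requires $N>(k-j)p$, and $j=1$ is the worst case.

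The genuine gap is the necessity half of (3): the counterexamples you propose cannot work. If $v'(t)=t^a$ with $a\notin\{0,1,\dots,k-2\}$, then $v^{(k)}=a(a-1)\cdots(a-k+2)\,t^{a-k+1}$ has exactly the same power as $v'/t^{k-1}=t^{a-k+1}$. Hence $v^{(k)}\in L^p(t^{N-1})$ if and only if $v'/t^{k-1}\in L^p(t^{N-1})$, and there is no window. With $v'=\log(1/t)$ you get $v^{(k)}=(-1)^{k-1}(k-2)!\,t^{1-k}$, so $v\notin W^{k,p}((0,R),\phi^{N-1})$ once $N\le(k-1)p$. The only exponents in your family that leave a window are $a\in\{0,\dots,k-2\}$, where $v^{(k)}\equiv0$. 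That is, $v'$ is a polynomial, which is exactly the case you ruled out. Your reason for ruling it out applies to $v'=t$ (then $u=d(x)^2/2$ is smooth) but not to $v'=1$. The obstruction comes from the cone singularity of $d(x)$ itself, not from a singular $v'$. The paper takes $v(t)=t$, i.e.\ $u=d(x)$. Then $v^{(j)}\equiv0$ for $j\ge2$, so $v\in W^{k,p}((0,R),\phi^{N-1})$ trivially, and an induction on $k$ gives
\[
(\nabla^ku)_{1\cdots1ij}=(-1)^k(k-2)!\,\frac{(\phi')^{k-1}}{\phi^{k-3}}\,\widetilde g_{ij}+o(\phi^{3-k}),\qquad 2\le i,j\le N .
\]
Hence $|\nabla^ku|_g\gtrsim\phi^{1-k}$ near $o$, and $\int_M|\nabla^ku|_g^p\,\mathrm dV_g\gtrsim\int_0^{R_0}\phi^{N-1-(k-1)p}\,\mathrm dr=\infty$.

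Two smaller points. First, the non-cancellation has to be checked on this mixed component with two angular indices, not on the purely angular one. For $k=3$ the purely angular component equals $\phi\phi'v'\,(\widetilde\nabla\widetilde g)_{abc}=0$ for every radial $u$. Second, in item (2), a point has positive $p$-capacity when $p>N$, so no cutoff, logarithmic or otherwise, makes $\int|\nabla\eta_\varepsilon|^p$ small there. Capacity is not what you need, though. It suffices that
\[
\varepsilon^{-1}\int_{B_{2\varepsilon}\setminus B_\varepsilon}|u|\,\mathrm dV_g\lesssim\int_\varepsilon^{2\varepsilon}|v|t^{N-2}\,\mathrm dt\to0 ,
\]
and this holds for every $p\ge1$ when $N\ge2$, because
\[
\int_0^a|v|t^{N-2}\,\mathrm dt\le\frac{a^{N-1}}{N-1}|v(a)|+\frac1{N-1}\int_0^a|v'|t^{N-1}\,\mathrm dt<\infty .
\]
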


Generalizing Theorem \ref{theoDSM} for bounded balls in $\mathbb R^N$ and Theorem \ref{theoC} for bounded ball in $\mathbb H^N$, we establish the embedding for $R<\infty$ of $W^{k,p}_{\mathrm{rad}}(M)$ into $L^q_{\phi^\theta}(M)$ with our next main result.
\begin{theo}\label{theo11}
Let $R\in(0,\infty)$, $\theta\geq0$, $p\geq1$, and $k\geq1$. Assume that the limit $\lim_{r\to R}\phi^{(j)}(r)\in(0,\infty)$ exists for every $j\geq0$.
\begin{flushleft}
    $\mathrm{(1)}$ \justifying{Every function $u\in W^{k,p}_{\mathrm{rad}}(M)$ is almost everywhere equal to a function $U$ in $C^{k-1}((0,R]\times \mathbb S^{N-1})$. Additionally, all partial derivatives of $U$ of order $k$ (in the classical sense) exist almost everywhere for $x\in M$.}
    
    \noindent$\mathrm{(2)}$ \justifying{If $N>kp$, then the following continous embedding holds:
    \begin{equation*}
    W^{k,p}_{\mathrm{rad}}(M)\hookrightarrow L^q_{\phi^\theta}(M)\quad\mbox{if}\quad1\leq q\leq p^*_\theta:=\frac{(\theta+N)p}{N-kp}
    \end{equation*}}
    
     \noindent$\mathrm{(3)}$ \justifying{If $N=kp$ and $p>1$, then the following compact embedding holds:
     \begin{equation*}
     W^{k,p}_{\mathrm{rad}}(M)\hookrightarrow L^q_{\phi^\theta}(M)\quad\mbox{if}\quad 1\leq q<\infty.
     \end{equation*}}
     
     \noindent$\mathrm{(4)}$ \justifying{If $N=kp$ and $p=1$, then $W^{N,1}_{\mathrm{rad}}(M)$ is continuously embedded in $C(\overline{B_R})$.}
\end{flushleft}
\end{theo}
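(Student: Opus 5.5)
The plan is to reduce everything to the one-dimensional weighted space via Theorem~\ref{theo10}(1). If $u\in W^{k,p}_{\mathrm{rad}}(M)$, then $v\in W^{k,p}((0,R),\phi^{N-1})$ with $\|v\|_{W^{k,p}_{\phi^{N-1}}}\le C\|u\|_{W^{k,p}(M)}$. By \eqref{layercake},
\begin{equation*}
\int_M|u|^q\phi^\theta(d(x))\,\mathrm dV_g=\omega_{N-1}\int_0^R|v(t)|^q\phi^{\theta+N-1}(t)\,\mathrm dt .
\end{equation*}
The next step is to compare $\phi$ with the Euclidean weight. Since $\phi$ is smooth on $[0,R)$, $\phi(0)=0$, $\phi'(0)=1$, $\phi>0$ on $(0,R)$, and $\lim_{r\to R}\phi(r)\in(0,\infty)$, there are constants $c_1,c_2>0$ with
\begin{equation*}
c_1 t\le\phi(t)\le c_2 t\quad\text{on }(0,R).
\end{equation*}
Near $0$ this follows from $\phi(t)/t\to1$, and on $[\delta,R)$ from continuity together with the positive limit at $R$. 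Hence
\begin{equation*}
W^{k,p}((0,R),\phi^{N-1})=W^{k,p}((0,R),t^{N-1})
\end{equation*}
with equivalent norms, and likewise $L^q_{\phi^\theta}(M)$ corresponds to $L^q((0,R),t^{\theta+N-1})$.

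Everything is thereby transferred to the Euclidean radial setting, and the intention is to apply the one-dimensional results underlying Theorem~\ref{theoDSM}. In de Figueiredo et al.\ these results are precisely statements about $W^{k,p}((0,R),t^{N-1})$. If needed, I will instead reprove them directly in this one-dimensional form. For item (1), $v\in W^{k,p}_{\mathrm{loc}}(0,R]$ with weight bounded below on $[\varepsilon,R)$, so $v\in C^{k-1}((0,R])$ and $v^{(k-1)}$ is absolutely continuous, hence differentiable a.e. Then $U(r,\theta)=v(r)$ is $C^{k-1}$ on $(0,R]\times\mathbb S^{N-1}$ in polar coordinates, and is differentiable $k$ times a.e. For items (2)--(4), I will prove the radial lemmas for $v$:
\begin{align*}
|v(r)|&\le C\|v\|\,r^{-(N-kp)/p}&&\text{if }N>kp,\\
|v(r)|&\le C\|v\|\bigl[(\log\tfrac Rr)^{(p-1)/p}+1\bigr]&&\text{if }N=kp,\ p>1,\\
|v|&\text{ bounded}&&\text{if }N=k,\ p=1.
\end{align*}
Each is obtained by iterating the estimate $|w(r)|\le|w(R)|+\int_r^R|w'|$ over $w=v^{(k-1)},\dots,v$, using Hölder with the weight $t^{N-1}$ and bounding the value at $R$ by the norm, since on $[R/2,R]$ the space is an unweighted $W^{1,p}$.

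Inserting the pointwise bound into $\int_0^R|v|^q t^{\theta+N-1}\,\mathrm dt$ proves items (2) and (3). For the endpoint $q=p^*_\theta$ in (2), I would use the interpolation $|v|^{q}=|v|^{q-p}|v|^p$ with the radial bound on the first factor, giving $\int r^{-(N-kp)(q-p)/p+\theta}|v|^pr^{N-1}$. This is not enough at the endpoint unless $\theta$ is large, so I will follow de Figueiredo et al.\ and use a weighted Hardy-type inequality instead. Compactness in (3) follows from the uniform smallness of $\int_0^\varepsilon$ given by the logarithmic bound, combined with Arzelà--Ascoli (or Rellich) on $[\varepsilon,R]$. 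For item (4), $v\in C([0,R])$: with $N=k$ and $p=1$, the function $v^{(k-1)}$ has integrable derivative against $t^{k-1}$, and repeated integration yields a bounded, continuous extension at $0$.

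The main obstacle is the critical endpoint $q=p^*_\theta$ in (2). The pointwise decay gives only $q<p^*_\theta$ for small $\theta$, so the sharp weighted one-dimensional Sobolev inequality is required. Given the weight comparison, I will invoke it directly from the Euclidean theorem, Theorem~\ref{theoDSM}(2), applied to the radial function $V(x)=v(|x|)$ on $B_R^{\mathbb R}$. This requires knowing that $V\in W^{k,p}_{\mathbb R,\mathrm{rad}}(B_R^{\mathbb R})$ whenever $v\in W^{k,p}((0,R),t^{N-1})$, i.e.\ the Euclidean version of the converse reduction. That converse holds when $N>(k-1)p$, by Theorem~\ref{theo10}(3) with $\phi(r)=r$, which is true in both cases (2) and (3)--(4). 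So the converse direction supplies the full result.
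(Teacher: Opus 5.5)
Your proposal is correct. For item (1) and the radial-lemma parts it follows the paper: you reduce to $v$ via Theorem~\ref{theo10}(1), compare $\phi$ with $t$, and use one-dimensional estimates. For the decisive step, the endpoint $q=p^*_\theta$ in (2), you take a genuinely different route.

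The paper stays on $M$. It uses the hypothesis on all limits $\lim_{r\to R}\phi^{(j)}(r)$ to extend $g$ smoothly to $\overline{B_R}$. It then applies the non-radial Sobolev embedding $W^{k,p}(M)\hookrightarrow L^{\frac{Np}{N-kp}}(M)$ for compact manifolds with boundary. Finally it absorbs the weight with the radial lemma \eqref{rls}, via the splitting $|u|^{p^*_\theta}\phi^\theta=|u|^{\frac{Np}{N-kp}}\bigl(|u|\phi^{\frac{N-kp}{p}}\bigr)^{\frac{\theta p}{N-kp}}$.

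You instead make the round trip $W^{k,p}_{\mathrm{rad}}(M)\hookrightarrow W^{k,p}((0,R),\phi^{N-1})\equiv W^{k,p}((0,R),t^{N-1})\equiv W^{k,p}_{\mathbb R,\mathrm{rad}}(B_R^{\mathbb R})$. The last identification is Theorem~\ref{theo10}(3) with $\phi(r)=r$: its hypotheses are trivial there, $N>(k-1)p$ holds in every case of the statement, and for $k=1$ Theorem~\ref{theo10}(2) suffices. You then pull back Theorem~\ref{theoDSM}, using that $\phi^{\theta+N-1}$ and $t^{\theta+N-1}$ are comparable.

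This is valid and yields (2)--(4) at once. It includes the compactness in (3), which you inherit directly from Theorem~\ref{theoDSM}(3); the paper's written argument for (3) only records the integrability bound. Your route needs weaker hypotheses: only \eqref{phi4} is used, so neither the smooth extension of the metric up to $r=R$ nor the Sobolev embedding on a compact manifold with boundary is needed. Its cost is reliance on the converse embedding of Proposition~\ref{propb} (Hardy inequality plus the structure of $\nabla^k u$), which the paper's proof of this theorem avoids.

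Two remarks on your optional self-contained arguments.

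First, the splitting $|v|^q=|v|^{q-p}|v|^p$ fails at the endpoint for every $\theta\geq0$, not only for small $\theta$. It reaches at most $q=p+\frac{\theta p}{N-kp}<p^*_\theta$. The split that works is $|v|^{q-s}|v|^{s}$ with $s=\frac{Np}{N-kp}$, which is exactly the paper's device.

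Second, for (4), iterating the pointwise bound $|w(r)|\le|w(R)|+\int_r^R|w'|$ over $w=v^{(k-1)},\dots,v$ only gives $|v(r)|\lesssim1+\log\frac Rr$. The same happens in the critical case $N=kp$, $p>1$, where you get $1+\log\frac Rr$ instead of the sharp power $\frac{p-1}{p}$; that is still enough for (3). For (4), boundedness and continuity at $0$ instead come from Cauchy's formula $v(r)=P(r)+\frac{(-1)^k}{(k-1)!}\int_r^R(t-r)^{k-1}v^{(k)}(t)\,\mathrm dt$, with $P$ the Taylor polynomial of $v$ at $R$. Combine it with $0\le(t-r)^{k-1}\le t^{k-1}$ and dominated convergence.
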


\begin{remark}\label{remarkct}
    Assuming no weights in the Lebesgue space ($\theta=0$), the previous result implies that $W^{k,p}_{\mathrm{rad}}(M) \hookrightarrow L^q(M)$ for every $1 \leq q \leq \frac{Np}{N-kp}$. This follows from the classical Sobolev embedding theorem without the need for radial symmetry (see \cite[Theorem 10.1]{MR1688256}).
\end{remark}

\begin{remark}
For an embbeding version from $W^{k,p}((0,R),\phi^{N-1})$ to $L^q_{\phi^{\theta}}(0,R)$, we refer the reader to Theorem \ref{theoweighted}.
\end{remark}

We establish the following radial lemmas (see Proposition \ref{proprls} and \ref{propworst}) to prove Theorem \ref{theo11}, assuming some suitable conditions on $\phi$. If $N>kp$, then there exists $C>0$ such that for all $u\in W^{k,p}_{\mathrm{rad}}(M)$,
\begin{equation}\label{rls}
|u(x)|\leq C\dfrac{\|u\|_{W^{k,p}(M)}}{\phi(r)^\frac{N-kp}p},\quad\mbox{a.e. }x=(r,\theta)\in (0,R]\times \mathbb S^{N-1}.
\end{equation}
Equivalently, it also holds that
\begin{equation}\label{asfkj111}
|u(x)|\leq C\dfrac{\|u\|_{W^{k,p}(M)}}{r^{\frac{N-kp}{p}}},\quad\mbox{a.e. }x=(r,\theta)\in(0,R]\times \mathbb S^{N-1}.
\end{equation}
If $N=kp$ and $p>1$, then there exists $C>0$ such that for all $u\in W^{k,p}_{\mathrm{rad}}(M)$,
\begin{equation}
|u(x)|\leq C\|u\|_{W^{k,p}(M)}\left[\left(\log\frac{R}{r}\right)^{\frac{p-1}p}+1\right],\quad\mbox{a.e. }x=(r,\theta)\in (0,R]\times \mathbb S^{N-1}.\label{rlsc}
\end{equation}
These radial lemmas \eqref{asfkj111} and \eqref{rlsc} were expected because in the Euclidean case, it was proved the same growth type (see \cite[Equations (1.1) and (1.2)]{MR2838041}). The reason is that the vital part of these radial lemmas is the estimate near the origin, where any Riemannian manifold is similar to the Euclidean space.

\begin{cor}\label{corcompact}
Assume the hypotheses of Theorem \ref{theo10}, and suppose that $N>kp$ and $\theta\geq0$. If $1\leq q<p^*_\theta$, then $W^{k,p}_{\mathrm{rad}}(M)\hookrightarrow L^q_{\phi^{\theta}}(M)$ is compact.
\end{cor}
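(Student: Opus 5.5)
The plan is to combine the continuous embedding of Theorem \ref{theo11}(2) at the critical exponent with a compactness argument away from the origin, plus uniform control of the $L^q_{\phi^\theta}$-mass near the origin via interpolation. Since $M$ is bounded ($R<\infty$ under the hypotheses of Theorem \ref{theo10}(3), and we also use the boundary assumptions of Theorem \ref{theo11}), take a bounded sequence $(u_n)\subset W^{k,p}_{\mathrm{rad}}(M)$. By Theorem \ref{theo10}(1) the radial profiles $v_n$ are bounded in $W^{k,p}((0,R),\phi^{N-1})$. It suffices to show that a subsequence is Cauchy in $L^q_{\phi^\theta}(M)$, i.e.\ in $L^q((0,R),\phi^{\theta+N-1})$ by the layer-cake identity \eqref{layercake}.

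The first step is compactness on annuli. Fix $\varepsilon\in(0,R)$. On $[\varepsilon,R)$ the weight $\phi^{N-1}$ is bounded above and below by positive constants, since $\phi>0$ on $(0,R)$ and $\lim_{r\to R}\phi(r)\in(0,\infty)$. Hence $(v_n)$ is bounded in the unweighted space $W^{1,p}(\varepsilon,R)$. By the one-dimensional Rellich–Kondrachov theorem (Arzelà–Ascoli, since $W^{1,p}(\varepsilon,R)\hookrightarrow C^{0,1-1/p}$ for $p>1$, and compactly into $L^q$ also for $p=1$), a subsequence converges uniformly, or at least in $L^q(\varepsilon,R)$, for every finite $q$. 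A diagonal argument over $\varepsilon=1/m$ produces a single subsequence converging in $L^q((\varepsilon,R),\phi^{\theta+N-1})$ for every $\varepsilon>0$.

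The second step, which I expect to be the main obstacle, is uniform smallness near the origin. I will use Hölder interpolation between $q$ and $p^*_\theta$: for $q<p^*_\theta$,
\begin{equation*}
\int_0^\varepsilon|v_n|^q\phi^{\theta+N-1}\,\mathrm dt\le\Big(\int_0^\varepsilon|v_n|^{p^*_\theta}\phi^{\theta+N-1}\,\mathrm dt\Big)^{q/p^*_\theta}\Big(\int_0^\varepsilon\phi^{\theta+N-1}\,\mathrm dt\Big)^{1-q/p^*_\theta}.
\end{equation*}
The first factor is bounded uniformly in $n$ by Theorem \ref{theo11}(2). The second factor tends to $0$ as $\varepsilon\to0$, because $\phi(t)\sim t$ near $0$ by \eqref{fsmooth}. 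Alternatively, I can apply the radial lemma \eqref{rls} directly: $|v_n(t)|^q\phi^{\theta+N-1}\le C\phi^{\theta+N-1-q(N-kp)/p}$, and this bound is integrable near $0$ with small integral exactly when $q<p^*_\theta$. This is where strictness of $q<p^*_\theta$ enters. The only care needed is that these estimates are uniform in $n$, which holds because they depend only on $\sup_n\|u_n\|_{W^{k,p}(M)}$.

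Finally, I combine the two steps with a standard $\varepsilon/3$ argument. First choose $\varepsilon$ so that the tails on $(0,\varepsilon)$ are below $\delta$ for all $n$. Then use the convergence on $(\varepsilon,R)$ to see that $\|u_n-u_m\|_{L^q_{\phi^\theta}(M)}<3\delta$ for large $n,m$. This shows the subsequence is Cauchy, which gives compactness of the embedding.
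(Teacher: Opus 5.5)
Your argument is correct, but it is organized differently from the paper's.

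The paper does not localize. It regards $M$ as the compact manifold with boundary $(\overline{B_R},g)$ and applies the Rellich--Kondrachov theorem there, which gives a subsequence that is Cauchy in the unweighted space $L^1(M)$. It then interpolates between $L^1(M)$ and the critical space $L^{p^*_\theta}_{\phi^\theta}(M)$ of Theorem \ref{theo11}(2). The weight is moved between exponents using $(\phi/\|\phi\|_{L^\infty})^{\theta/q}\le(\phi/\|\phi\|_{L^\infty})^{\theta/p^*_\theta}\le1$.

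You instead split $(0,R)$ into $(0,\varepsilon)$ and $[\varepsilon,R)$. On $[\varepsilon,R)$ the weight is nondegenerate, so you only need the one-dimensional Rellich theorem. Near the origin you get uniform smallness either from H\"older against the critical embedding or from the radial lemma \eqref{rls}.

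The comparison is as follows:
\begin{itemize}
\item The paper's proof is shorter once the $N$-dimensional compactness theorem is taken off the shelf.
\item Yours is more elementary. It also makes clear that concentration at the origin is the only possible loss of compactness, which is exactly what $q<p^*_\theta$ rules out.
\item Your radial-lemma variant needs weaker hypotheses. It uses only Theorem \ref{theo10}(1), Proposition \ref{proprls}, and $\phi(t)\sim t$ at $0$, so it needs just $R<\infty$ and \eqref{phi4}.
\item By contrast, the paper's argument and your H\"older variant (through Theorem \ref{theo11}(2)) both rely on the smooth extension of $g$ up to $r=R$.
\end{itemize}

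You were right to read the statement in the setting of Theorem \ref{theo11}, in particular with $R<\infty$; the paper's proof silently uses that setting too.
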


We now turn to the case where $R=\infty$. As in previous works (see \eqref{dlrn} and \eqref{dlhn}), given $u\in W^{1,p}_{\mathrm{rad}}(M)$, it is necessary to obtain an asymptotic decay estimate as $d(x)\to\infty$. Here, we restrict our analysis to those manifolds which the infimum
\begin{equation}\label{Cphi}
C_{\phi}:=\inf_{0<r\leq t}\dfrac{\phi(t)}{\phi(r)}
\end{equation}
is strictly positive. When $\phi$ is nondecreasing, as in the cases of $M=\mathbb R^N$ and $M=\mathbb H^N$, we have $C_\phi=1$. Under this assumption, we establish in Lemma \ref{decaylemma} the following decay estimate
\begin{equation}\label{aksfn}
|u(x)|\leq \left(\frac{p}{C^{N-1}_\phi\omega_{N-1}}\right)^{\frac1p}\|u\|_{L^p(M)}^{\frac{p-1}p}\|\nabla u\|_{L^p(M)}^{\frac1p}\phi(r)^{\frac{1-N}p},\quad\mbox{a.e. }x=(r,\theta)\in (0,\infty)\times\mathbb S^{N-1}.
\end{equation}
We emphasize that, in contrast with the case $R<\infty$ (see \eqref{asfkj111}), the right-hand side of \eqref{aksfn} explicitly depends on $\phi$. This indicates the fact that the volume growth of the manifold $M$ is governed by the warping function $\phi$. In particular, faster expasion of $M$ (corresponding to faster growth of $\phi$) yields stronger decay as $r=d(x)\to\infty$, while slower expansion leads to weaker decay. Consequently, with this decay lemma we establish the Sobolev embedding theorem on the entire manifold $M\backslash\{o\}=(0,\infty)\times\mathbb S^{N-1}$. We state this result next.

\begin{theo}\label{theo12}
Let $R=\infty$, $\theta\geq0$, $p\geq1$, and $k\geq1$. Assume that $C_\phi>0$.
\begin{flushleft}
$\mathrm{(1)}$ \justifying{Every function $u\in W^{k,p}_{\mathrm{rad}}(M)$ is almost everywhere equal to a function $U$ in $C^{k-1}(M\backslash\{o\})$. In addition, all partial derivatives of $U$ of order $k$ (in the classical sense) exist almost everywhere for $x\in M$.}

\noindent$\mathrm{(2)}$ If $N>kp$, then the following continuous embedding holds:
\begin{equation*}
W^{k,p}_{\mathrm{rad}}(M)\hookrightarrow L^q_{\phi^\theta}(M) \quad \text{if} \quad p\leq q\leq p^*_\theta.
\end{equation*}
Moreover, the embedding is compact if $\lim_{r\to\infty}\phi(r)=\infty$, $q<p_\theta^*$, and $p\theta<q(N-1)$.

\noindent$\mathrm{(3)}$ \justifying{If $N=kp$, then the following continuous embedding holds:}
\begin{equation*}
W^{k,p}_{\mathrm{rad}}(M)\hookrightarrow L^q_{\phi^\theta}(M)\quad \text{if}\quad p\leq q<\infty.
\end{equation*}
Moreover, the embedding is compact if $\lim_{r\to\infty}\phi(r)=\infty$ and $p\theta<q(N-1)$.
\end{flushleft}
\end{theo}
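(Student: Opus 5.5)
Item (1) follows by localization. On any annulus $\{a<r<b\}$ with $0<a<b<\infty$, Theorem~\ref{theouv} gives $v\in W^{k,p}((a,b))$, because $\phi$ is bounded above and below by positive constants on $[a,b]$. So $v$ agrees a.e. with a $C^{k-1}$ function on $(a,b)$ whose $(k-1)$-th derivative is absolutely continuous. We set $U(r,\theta)=v(r)$; it is smooth in $\theta$ and $C^{k-1}$ in $(r,\theta)$ away from $o$, and its order-$k$ partial derivatives exist a.e. since $v^{(k-1)}$ is differentiable a.e. Exhausting $(0,\infty)$ by such intervals gives (1).

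For (2) and (3) we split $M$ into $B_1=\{r<1\}$ and $M\setminus B_1$. \emph{Near the origin}, the restriction $u|_{B_1}$ lies in $W^{k,p}_{\mathrm{rad}}(B_1)$, where $B_1$ is a bounded spherically symmetric manifold with $R=1$. The limits of $\phi^{(j)}$ at $1$ exist and $\phi(1)>0$, so Theorem~\ref{theo11} applies and gives
\[
\|u\|_{L^q_{\phi^\theta}(B_1)}\le C\|u\|_{W^{k,p}(B_1)}\le C\|u\|_{W^{k,p}(M)}
\]
for $q\le p^*_\theta$ (resp.\ every $q<\infty$ if $N=kp$). The same theorem yields compactness on $B_1$ when $q<p^*_\theta$: in the subcritical case we use Corollary~\ref{corcompact}, or the compactness part of Theorem~\ref{theo11}(3) when $N=kp$.

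\emph{Away from the origin}, we use the decay estimate \eqref{aksfn}, which gives $|u(x)|\le C\|u\|_{W^{1,p}(M)}\phi(r)^{-(N-1)/p}$ for $r\ge1$. Then for $q\ge p$,
\[
\int_{r\ge1}|u|^q\phi^\theta\,dV_g\le \sup_{r\ge1}\big(|u|^{q-p}\phi^{\theta}\big)\int_{r\ge 1}|u|^p\,dV_g .
\]
Here $|u|^{q-p}\phi^\theta\lesssim\|u\|^{q-p}\phi(r)^{\theta-(q-p)(N-1)/p}$. The difficulty is that this exponent may be positive, and $\phi$ may be unbounded. To overcome this I would not pull out a sup; instead I would use that $\phi$ is bounded below on $[1,\infty)$ (by $C_\phi\phi(1)>0$) and write $\phi^\theta=\phi^{N-1}\phi^{\theta-N+1}$. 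The integrand is then $\omega_{N-1}|v|^q\phi^{\theta}$ in $dt$, and we estimate $|v|^{q-p}\phi^{\theta-(N-1)}$ pointwise.

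That is the main obstacle: for $\theta>N-1$ with $q$ near $p$, the decay of $|v|^{q-p}$ does not compensate $\phi^{\theta-N+1}$. Here I expect that the authors' hypothesis $p\le q$ is sharp only when combined with a better decay. I would first try to improve the decay using the higher-order estimate from Theorem~\ref{theouv}: apply \eqref{aksfn}-type arguments to $v^{(j)}$, which gives $|v(t)|^p\phi^{N-1}(t)\le p\int_t^\infty|v|^{p-1}|v'|\phi^{N-1}$ with the tail tending to zero. If this fails for large $\theta$, I would fall back to an interpolation between the $L^p$ bound and the critical radial bound.

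Compactness on $M\setminus B_1$ follows from the tail estimate. When $\phi\to\infty$ and $p\theta<q(N-1)$, the exponent $\theta-q(N-1)/p$ is negative. Hence
\[
\int_{r>\rho}|u|^q\phi^\theta\le C\|u\|^q_{W^{1,p}}\sup_{r>\rho}\phi^{\theta-q(N-1)/p}\int_{r>\rho}\phi^{N-1}\cdots,
\]
and splitting off a small power of $|u|^p$ makes this uniformly small on bounded sets as $\rho\to\infty$. On the annuli $\{1\le r\le\rho\}$, Rellich compactness in one dimension applies. A diagonal argument combining these pieces with compactness on $B_1$ gives (2) and (3).
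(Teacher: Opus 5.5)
Your decomposition matches the paper's: Theorem~\ref{theo11} on a ball, Lemma~\ref{decaylemma} outside it, and uniform tail control plus local compactness for the compact part. Item (1) and the near-origin step are fine. Using Theorem~\ref{theo11} for general $k$ on $B_1$ together with the $W^{1,p}$ decay outside even spares you the paper's induction through $W^{1,\overline p}_{\mathrm{rad}}(M)$.

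\textbf{The exterior estimate.} This is where the gap is, and your first computation located it correctly:
$\int_{r\ge1}|u|^q\phi^\theta\,dV_g\le\sup_{r\ge1}(|u|^{q-p}\phi^\theta)\,\|u\|^p_{L^p(M)}$, with $|u|^{q-p}\phi^\theta\lesssim\|u\|_{W^{1,p}(M)}^{q-p}\phi^{\theta-(N-1)(q-p)/p}$.
Your proposed way around it rests on a bookkeeping slip. In the radial variable, $\int_{r\ge1}|u|^q\phi^\theta\,dV_g=\omega_{N-1}\int_1^\infty|v|^q\phi^{\theta+N-1}\,dt$, not $\omega_{N-1}\int_1^\infty|v|^q\phi^{\theta}\,dt$. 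The factor $\phi^{N-1}$ is already used up by $dV_g$ and cannot be split off from $\phi^\theta$.

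With the correct weight, and using $\phi\ge C_\phi\phi(1)>0$ on $[1,\infty)$, the exterior bound holds precisely when $p\theta\le(N-1)(q-p)$. The tail-smallness step of your compactness argument needs $p\theta<(N-1)(q-p)$ and $\phi\to\infty$. The exponent that must be negative is $\theta-(N-1)(q-p)/p$, not $\theta-q(N-1)/p$. So the bad range is not ``$\theta>N-1$ with $q$ near $p$'' but every $\theta>0$ with $q<p+p\theta/(N-1)$, including $q=p$.

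\textbf{The claim fails in that range.} Neither remedy you suggest (better decay from higher derivatives, or interpolation with the critical radial bound) can close this, because the inequality is false there. Take $M=\mathbb R^N$ ($\phi(r)=r$, $C_\phi=1$), $0\ne\psi\in C_c^\infty((0,1))$, and $u_n(x)=\psi(|x|-n)$ for $n\ge1$.
\begin{itemize}
\item All derivatives of $|x|$ are bounded on $\{|x|\ge1\}$, so $\|u_n\|_{W^{k,p}(M)}^p\le Cn^{N-1}$.
\item On the other hand, $\int|u_n|^q|x|^\theta\,dx\ge c\,n^{N-1+\theta}$.
\item Hence $\|u_n\|_{L^q_{\phi^\theta}(M)}/\|u_n\|_{W^{k,p}(M)}\ge c\,n^{(N-1+\theta)/q-(N-1)/p}$. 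This is unbounded whenever $p\theta>(N-1)(q-p)$, e.g.\ $q=p$ and $\theta>0$, in item (2) as well as item (3).
\item If $p\theta=(N-1)(q-p)$, the normalized bumps stay bounded away from $0$ in $L^q_{\phi^\theta}(M)$ while their supports escape to infinity. So no subsequence converges, and compactness fails even though $p\theta<q(N-1)$. A concrete case is $N=3$, $p=2$, $k=1$, $\theta=1$, $q=3<p^*_\theta=8$.
\end{itemize}
In particular, the rate $\phi^{(1-N)/p}$ of Lemma~\ref{decaylemma} is sharp, also for $W^{k,p}$.

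\textbf{The paper's proof has the same flaw.} In \eqref{rews} it replaces $\phi^\theta$ by $\phi(R)^{\theta-N+1}\phi^{N-1}$ inside $dV_g$. This is the same double counting, and it is also invalid for $\theta>N-1$ when $\phi$ is unbounded. The resulting $\int_{M\setminus B_R}|u|^p\phi^{N-1}\,dV_g$ is not controlled by $\|u\|_{W^{1,p}(M)}$.

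\textbf{What your route does prove.} It establishes the corrected statement. Continuity holds for $q\le p^*_\theta$ (resp.\ $q<\infty$) under $p\theta\le(N-1)(q-p)$. Compactness holds when in addition $q<p^*_\theta$, $p\theta<(N-1)(q-p)$ and $\phi\to\infty$.
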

\begin{remark}
Similarly to Remark \ref{remarkct}, for $\theta=0$, the previous result implies that $W^{k,p}_{\mathrm{rad}}(M) \hookrightarrow L^q(M)$ for every $p \leq q \leq \frac{Np}{N-kp}$. This follows from the classical Sobolev embedding theorem without the need for radial symmetry (see \cite[Theorem 3.2]{MR1688256}).
\end{remark}

\begin{remark}

For an embedding version from $W^{k,p}((0,\infty),\phi^{N-1})$ to $L^q_{\phi^\theta}(0,\infty)$, we refer to Theorem \ref{theo122}.
\end{remark}

\subsection{Outline of the paper}

The rest of the paper is divided as follows. In Section \ref{sec2}, we collect some preliminary background on $k$-covariant derivatives and Sobolev spaces on Riemannian manifolds. Section \ref{sec3} is devoted to the proof of Theorem \ref{theouv}. Section \ref{sec4} focuses on characterizing when the condition $u\in W^{k,p}_{\mathrm{rad}}(M)$ is equivalent to $v\in W^{k,p}((0,R),\phi^{N-1})$ and proving Theorem \ref{theo10}. Section \ref{sec5} contains the Sobolev embedding results on bounded spherically symmetric Riemannian manifolds, that is, when $R<\infty$. In this section, we establish suitable radial lemmas and use them to prove Theorem \ref{theo11}. Finally, Section \ref{sec6} addresses the unbounded case $R=\infty$, where we prove Theorem \ref{theo12} by proving the corresponding Sobolev embeddings together with appropriate decay lemmas.

\section{Preliminaries}\label{sec2}

Let us assume $M$ being a $N$-dimensional spherically symmetric Riemannian manifold. Denote $\theta_2,\ldots,\theta_{N}$ the spherical coordinates on $\mathbb S^{N-1}$. Denoting $\theta_1=r$ the radial coordinate of $M\backslash\{o\}$, we find that $\theta_1,\ldots,\theta_N$ are the coordinates of $M\backslash\{o\}$. If $\widetilde g$ denotes the metric of $\mathbb S^{N-1}$ and $\widetilde \Gamma_{ij}^k$ denotes the Christoffel symbols of $\mathbb S^{N-1}$ under the spherical coordinates $\theta_2,\ldots,\theta_N$, then
\begin{equation*}
g_{ij}=\left\{\begin{array}{ll}
\phi^2(r)\widetilde g_{ij},&\mbox{if }2\leq i,j\leq N,\\
\delta_{ij},&\mbox{otherwise},
\end{array}\right.\mbox{ and }g^{ij}=\left\{\begin{array}{ll}
\phi^{-2}(r)\widetilde g^{ij},&\mbox{if }2\leq i,j\leq N,\\
\delta^{ij},&\mbox{otherwise}.
\end{array}\right.
\end{equation*}
The explicit form of the spherical coordinates is not relevant for our purposes; however, we will need the fact that $\widetilde g_{ij}=\widetilde g^{ij}=0$ for $i\neq j$. Let $2\leq i,j,k\leq N$. Doing the calculations,
\begin{equation}\label{christoffel}
\Gamma_{ij}^k=\widetilde \Gamma_{ij}^k,\ \Gamma_{i1}^k=\Gamma_{1i}^k=\delta_{ik}\frac{\phi'(r)}{\phi(r)},\ \Gamma_{ij}^1=-\phi(r)\phi'(r)\widetilde g_{ij}\mbox{ and }\Gamma_{11}^k=\Gamma_{i1}^1=\Gamma_{11}^1=0.
\end{equation}

Let us introduce some definitions contained in \cite{MR1688256} to define the Sobolev spaces on Riemannian manifolds. We define the space $T_k(T_xM)$ of all $k$-covariant tensor
\begin{equation*}
\eta\colon \underbrace{T_xM\times\cdots\times T_xM}_\text{$k$-times}\rightarrow\mathbb R.
\end{equation*}
Given a chart of $M$ at $x$, the set
\begin{equation*}
    \left\{dx_{i_1}\otimes\cdots\otimes dx_{i_k}\right\}_{i_1\ldots i_k}
\end{equation*}
is a base of $T_k(T_xM)$, where $\otimes$ is the tensor product. We denote the components of a $k$-covariant tensor $\eta$ by $\eta_{i_1\ldots i_k}$. Then
\begin{equation*}
\eta=\sum_{i_1\ldots i_k=1}^N\eta_{i_1\ldots i_k}dx_{i_1}\otimes\cdots\otimes dx_{i_k}.
\end{equation*}
A map $\eta\colon M\to \amalg_{x\in M}T_k(T_xM)$ is said to be a $k$-covariant tensor field if $\eta(x)\in T_k(T_xM)$ for all $x\in M$. We said that $\eta$ is of class $C^{j}$ if it is of class $C^j$ from the manifold $M$ to the manifold $\amalg_{x\in M}T_k(T_xM)$ (equivalently, $\eta_{i_1\ldots i_k}\colon M\to\mathbb R$ is of class $C^j$ for all $i_1,\ldots,i_k$). The metric $g$ induces a metric in the space of the $k$-covariant tensor field in the following way:
\begin{equation*}
\langle \eta,\nu\rangle_g:=\sum_{i_1\ldots i_kj_1,\ldots j_k}g^{i_1j_1}\cdots g^{i_kj_k}\eta_{i_1\ldots i_k}\nu_{j_1\ldots j_k},
\end{equation*}
where $\eta$ and $\nu$ are $k$-covariant tensor fields.

Given $\eta$ a $k$-covariant tensor field of class $C^{j+1}$, we define the covariant derivative $\nabla\eta$ as a $(k+1)$-covariant tensor field of class $C^j$ whose components are given by
\begin{equation*}
\left(\nabla \eta\right)(x)_{i_1\ldots i_{k+1}}=\dfrac{\partial \eta_{i_2\ldots i_{k+1}}}{\partial x_{i_1}}(x)-\sum_{\ell=2}^{k+1}\sum_{\alpha=1}^N\Gamma^\alpha_{i_1i_\ell}\eta(x)_{i_2\ldots i_{\ell-1}\alpha i_{\ell+1}\ldots i_{k+1}}.
\end{equation*}

For a function $u\colon M\to\mathbb R$ smooth, we denote by $\nabla^ku$ the $k$-covariant derivative of $u$, and $|\nabla^ku|_g$ the norm of $\nabla^ku$ defined by
\begin{equation}\label{normkcov}
|\nabla^ku|_g^2=\sum_{i_1\ldots i_kj_1\ldots j_k=1}^Ng^{i_1j_1}\cdots g^{i_kj_k}\left(\nabla^ku\right)_{i_1\ldots i_k}\left(\nabla^ku\right)_{j_1\ldots j_k}.
\end{equation}

Given $k$ a positive integer and $p\geq1$ real number, we define
\begin{equation*}
\mathfrak C^{k,p}(M)=\left\{u\in C^\infty(M)\colon\int_M|\nabla^ju|_g^p\mathrm dV_g<\infty,\quad\forall j=0,\ldots,k\right\}
\end{equation*}
and $W^{k,p}(M)$ the Sobolev space on a Riemannian manifold by the completion of $\mathfrak C^{k,p}(M)$ under the norm $\|\cdot\|_{W^{k,p}}$, where
\begin{equation*}
    \|u\|_{W^{k,p}(M)}=\sum_{j=0}^k\left(\int_M|\nabla^ju|_g^p\mathrm dV_g\right)^{\frac1p},\quad\mbox{for }u\in \mathfrak C^{k,p}(M).
\end{equation*}

The space $W^{k,p}(M)$ can be interpreted as the space of all functions $u\in L^p(M)$ such that there exists $(u_n)$ Cauchy sequence in $\mathfrak C^{k,p}(M)$ with $u_n\to u$ in $L^p(M)$. Therefore, we define $\nabla^ju$ as the $L^p$-limit of the sequence $(\nabla^ju_n)$ and the norm $\|\cdot\|_{W^{k,p}}$ is defined with the same expression as above.

\section{Proof of Theorem \ref{theouv}}\label{sec3}

In this section, we present the proof of our first main theorem. We emphasize that, in the middle of the proof, we obtain the simple relation
\begin{equation*}
v^{(k)}(r)=\left(\nabla^k u\right)_{1\cdots 1}(x),\quad\mbox{a.e. } x=(r,\theta)\in M\backslash\{o\}
\end{equation*}
as stated in \eqref{eqc456}. For comparison, in the Euclidean setting, the corresponding expression (see \cite[Equation (2.3)]{MR2838041}) takes the form
\begin{equation*}
v^{(k)}\left(d(x)\right)=\sum_{i_1\ldots i_j=1}^N(\nabla^ju)_{i_1\cdots i_k}(x)\dfrac{x_{i_1}\cdots x_{i_j}}{|x|^j},\quad\mbox{a.e. }x\in B_R^{\mathbb R}.
\end{equation*}
On the other hand, in the hyperbolic setting, it was shown in \cite[Lemma 3.2]{MR4959099} that
\begin{equation*}
v^{(j)}(d(x))=\left(\frac{1-|x|^2}2\right)^j\sum_{i_1\ldots i_j=1}^N(\nabla^ju)_{i_1\cdots i_k}(x)\dfrac{x_{i_1}\cdots x_{i_j}}{|x|^j}\quad\mbox{a.e. }x\in B_R^{\mathbb H}.
\end{equation*}
As these formulas illustrate, the additional complexity in the Euclidean and hyperbolic cases arises from the use of Cartesian coordinates. In contrast, in the present work, we apply geodesic polar coordinates, which are more suitable for radial functions and lead to a simpler formula for higher-order derivatives, which holds on a more general class of Riemannian manifolds.

\begin{proof}[Proof of Theorem \ref{theouv}]
We claim that $v$ has $k$ weak derivatives and
\begin{equation}\label{eqc456}
    v^{(k)}(r)=\left(\nabla^k u\right)_{1\cdots 1}(x),\quad\mbox{a.e. } x=(r,\theta)\in M\backslash\{o\}.
\end{equation}
First of all, let us show \eqref{eqc456} for $u$ smooth. Using $\Gamma_{11}^k=\Gamma_{11}^1=0$ and the recursive definition of $k$-covariant derivative, we have, for any $x=(r,\theta)\in M$,
\begin{align}
\left(\nabla^{k}u\right)_{1\cdots 1}(x)&=\left(\nabla_1\nabla^{k-1}u\right)_{1\cdots 1}(x)\nonumber\\
&=\dfrac{\partial(\nabla^{k-1}u)_{1\cdots 1}}{\partial r}(x)-\sum_{\ell=1}^{k-1}\sum_{\alpha=1}^N\Gamma_{11}^\alpha(x)\left(\nabla^{k-1}u\right)_{1\cdots 1\alpha 1\cdots 1}(x)\nonumber\\ 
&=\dfrac{\partial\left(\nabla^{k-1}u\right)_{1\cdots 1}}{\partial r}(x)=\cdots=\dfrac{\partial^ku}{\partial r^k}(x)=v^{(k)}(r),\label{eq456}
\end{align}
where in the term $\left(\nabla^{k-1}u\right)_{1\cdots 1\alpha 1\cdots 1}$ has $\alpha$ on the position $\ell$. This concludes \eqref{eqc456} for smooth function. 

Before we show \eqref{eqc456} for $u\in W^{k,p}_{\mathrm{rad}}(M)$, we claim that given $w\in C^\infty_0(M)$ with $\mathrm{supp}(w)\subset M\backslash\{o\}$, we have
\begin{equation}\label{eqdivtheo}
    \int_{M}u(x)\dfrac{\partial w}{\partial r}(x)\dfrac{1}{\phi^{N-1}(r)}\mathrm dV_g=-\int_{M}\dfrac{\partial u}{\partial r}(x)w(x)\dfrac{1}{\phi^{N-1}(r)}\mathrm dV_g.
\end{equation}
Indeed, define the vector field $W=w(x)\partial_r/\phi^{N-1}(r)$ and note that
\begin{equation*}
    \mathrm{div}(W)=\dfrac{1}{\sqrt{|g|}}\dfrac{\partial}{\partial r}\left(\dfrac{w(x)}{\phi^{N-1}(r)}\sqrt{|g|}\right)=\dfrac{\partial w}{\partial r}(x)\dfrac{1}{\phi^{N-1}(r)},
\end{equation*}
where we used that $|g|=\phi^{2(N-1)}(r)|g_{\mathbb S^{N-1}}|$. By the divergence theorem, we have
\begin{align*}
\int_{M}u\dfrac{\partial w}{\partial r}\dfrac{1}{\phi^{N-1}(r)}\mathrm dV_g&=\int_{M}u\mathrm{div}(W)\mathrm dV_g=-\int_{M}g\left(\nabla u,W\right)\mathrm dV_g\\
&=-\int_{M}\dfrac{\partial u}{\partial r}w\dfrac{1}{\phi^{N-1}(r)}\mathrm dV_g,
\end{align*}
which implies \eqref{eqdivtheo}.

Now let us fix $u\in W^{k,p}_{\mathrm{rad}}(M)$. Given $\varphi\in C^\infty_0(0,R)$ we define $\psi(x)=\varphi(r)$ for all $x=(r,\theta)\in M$. Then, by \eqref{layercake}, \eqref{eq456} and \eqref{eqdivtheo},
\begin{align*}
\int_0^Rw\varphi^{(k)}\mathrm dr&=\dfrac{1}{\omega_{N-1}}\int_{M}u\dfrac{\partial^k\psi}{\partial r^k}\dfrac{1}{\phi^{N-1}(r)}\mathrm dV_g=\dfrac{(-1)^k}{\omega_{N-1}}\int_{M}\dfrac{\partial^ku}{\partial r^k}\psi\dfrac{1}{\phi^{N-1}(r)}\mathrm dV_g\\
&=(-1)^k\int_0^R\dfrac{\partial^ku}{\partial r^k}(r,\theta_0)\varphi\ \mathrm dr,
\end{align*}
with $\theta_0\in \mathbb S^{N-1}$ fixed. Then \eqref{eqc456} follows since $(\nabla^ku)_{1\cdots 1}=\partial^ku/\partial r^k$.

Note that \eqref{d456} is a direct consequence of $g^{ij}=0$ for $i\neq j$ in the following way
\begin{align*}
|\nabla^ku|_g^2&=\sum_{i_1,j_1,\ldots,i_k,j_k=1}^Ng^{i_1j_1}\cdots g^{i_kj_k}\left(\nabla^ku\right)_{i_1\cdots i_k}\left(\nabla^ku\right)_{j_1\cdots j_k}\\
&=\sum_{i_1,\ldots,i_k=1}^Ng^{i_1i_1}\cdots g^{i_ki_k}\left[\left(\nabla^ku\right)_{i_1\cdots i_k}\right]^2\\
&\geq\left[\left(\nabla^ku\right)_{1\cdots 1}\right]^2=|v^{(k)}|^2.
\end{align*}

Therefore, from \eqref{layercake}, \eqref{eqc456}, and \eqref{d456} we obtain that $v\in W^{k,p}((0,R),\phi^{N-1})$ with
\begin{equation*}
\|v\|_{W^{k,p}_{\phi^{N-1}}}^p=\sum_{j=0}^k\int_0^R|v^{(j)}|^p\phi^{N-1}(t)\mathrm dt\leq \dfrac{1}{\omega_{N-1}}\sum_{j=0}^k\int_{M}|\nabla^ju|^p\mathrm dV_g=\dfrac1{\omega_{N-1}}\|u\|^p_{W^{k,p}(M)}.
\end{equation*}
This concludes the proof.
\end{proof}

\section{Relations between \texorpdfstring{$W^{k,p}_{\mathrm{rad}}(M)$}{} and \texorpdfstring{$W^{k,p}((0,R),\phi^{N-1})$}{}}\label{sec4}

The main goal of this section is to prove Theorem~\ref{theo10}, which characterizes precisely when the condition $u\in W^{k,p}_{\mathrm{rad}}(M)$ is equivalent to $v\in W^{k,p}((0,R),\phi^{N-1})$, and conversely. The proof is divided into three steps, formulated as Propositions~\ref{propa}, \ref{propb}, and~\ref{propc}.

\begin{prop}\label{propa}
For each $p\geq1$ and $k\geq1$ integer, we have
\begin{enumerate}
    \item[(a)] $W^{k,p}_{\mathrm{rad}}(M)\hookrightarrow W^{k,p}((0,R),\phi^{N-1})$;
    \item[(b)] $W^{1,p}_{\mathrm{rad}}(M)\equiv W^{1,p}((0,R),\phi^{N-1})$.
\end{enumerate}
\end{prop}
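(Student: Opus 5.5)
Part (a) follows from Theorem \ref{theouv}. For $u\in W^{k,p}_{\mathrm{rad}}(M)$ with $u(x)=v(r)$, that theorem gives $v\in W^{k,p}((0,R),\phi^{N-1})$. Its final estimate is
\begin{equation*}
\|v\|^p_{W^{k,p}_{\phi^{N-1}}}\le \omega_{N-1}^{-1}\|u\|^p_{W^{k,p}(M)} .
\end{equation*}
The map $u\mapsto v$ is linear and injective, since $u$ is determined a.e. by $v$ through \eqref{layercake}. Together with the estimate, this gives a continuous embedding and proves (a).

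For (b), (a) with $k=1$ already gives one inclusion. It remains to show that every $v\in W^{1,p}((0,R),\phi^{N-1})$ yields $u(x):=v(r)\in W^{1,p}(M)$ with a comparable norm. Formally, $|\nabla u|_g=|v'(r)|$ since $g^{11}=1$ and $g^{1i}=0$. Then \eqref{layercake} gives
\begin{equation*}
\int_M|u|^p\,\mathrm dV_g=\omega_{N-1}\int_0^R|v|^p\phi^{N-1}\,\mathrm dt,\qquad \int_M|\nabla u|_g^p\,\mathrm dV_g=\omega_{N-1}\int_0^R|v'|^p\phi^{N-1}\,\mathrm dt ,
\end{equation*}
so the norms are equivalent. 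The work is to justify that $u$ really lies in $W^{1,p}(M)$, which is defined as a completion of smooth functions.

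I would do this by approximation in the one-dimensional weighted space. The first step is to approximate $v$ in $W^{1,p}((0,R),\phi^{N-1})$ by functions $v_n$ that are smooth on $[0,R)$ and constant near $0$. Then $u_n(x)=v_n(r)$ is smooth on $M$, including at $o$, and the identities above show that $(u_n)$ is Cauchy in $\mathfrak C^{1,p}(M)$ and converges to $u$ in $L^p(M)$. To build $v_n$, I would truncate at infinity when $R=\infty$, or use a dilation when $R<\infty$. I would then mollify on compact subintervals of $(0,R)$, where the weight $\phi^{N-1}$ is bounded above and below. Finally, I would modify $v_n$ near $0$ to make it constant there.

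The main obstacle is this last step, the behavior near the origin. Making $v_n$ constant near $0$ needs a cutoff. Take $\eta_\varepsilon(t)=\eta(t/\varepsilon)$ and replace $v$ by $\eta_\varepsilon v(\varepsilon)+(1-\eta_\varepsilon)v$ near $0$. Since $\phi(t)\sim t$ as $t\to0$, the added derivative term is controlled by
\begin{equation*}
\varepsilon^{-p}\int_0^{2\varepsilon}|v(t)-v(\varepsilon)|^p\,t^{N-1}\,\mathrm dt .
\end{equation*}
A one-dimensional Hardy-type inequality bounds this by $C\int_0^{2\varepsilon}|v'|^p t^{N-1}\,\mathrm dt\to0$. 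This works because $v(t)-v(\varepsilon)=\int_\varepsilon^t v'$ and $t\le2\varepsilon$, and it is valid for all $N\ge1$. If this estimate turns out to be delicate, I would fall back on a capacity argument: a point has zero $W^{1,p}$ capacity for $p\le N$, and for $p>N$ the function $v$ is continuous at $0$, so the constant-value modification near $0$ still converges.
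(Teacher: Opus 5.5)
Your proposal is correct. Part (a) is the paper's argument. Incidentally, the paper's displayed chain for (a) writes $\int_M|\nabla^ju|_g^p\,\mathrm dV_g\le\omega_{N-1}\int_0^R|v^{(j)}|^p\phi^{N-1}\,\mathrm dt$, which is the reverse of what Theorem \ref{theouv} gives. Your version quotes the final estimate of that theorem and has the right direction.

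For (b), the paper performs only your ``formal'' step: it computes $(\nabla u)_i=\delta_{i1}v'(r)$, hence $|\nabla u|_g=|v'(r)|$, and appeals to \eqref{layercake}. It never checks that $x\mapsto v(r)$ actually lies in $W^{1,p}(M)$, which is defined as a completion of $\mathfrak C^{1,p}(M)$. Your approximation by smooth profiles that are constant near $0$ supplies exactly this missing step. Your key estimate is sound for every $N,p\ge1$. On $[\varepsilon,2\varepsilon]$, where $\eta_\varepsilon'$ lives, one has $|v(t)-v(\varepsilon)|^p\le C\varepsilon^{p-N}\int_\varepsilon^{2\varepsilon}|v'|^ps^{N-1}\,\mathrm ds$. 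The zeroth-order piece $|v(\varepsilon)|^p\varepsilon^N$ is controlled by the same annulus estimate. Hence the total error is at most $C\int_0^{2\varepsilon}(|v|^p+|v'|^p)t^{N-1}\,\mathrm dt\to0$, with $C$ independent of $\varepsilon$. This is essentially the construction the paper uses later in Lemma \ref{lemmadensity}. That lemma invokes (b) rather than proving it, so your ordering is the logically cleaner one. The capacity fallback is not needed.

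One correction: drop the dilation for $R<\infty$. Proposition \ref{propa} assumes nothing about $\phi$ near $R$; for instance, $\phi(r)=\tan r$ with $R=\pi/2$ is admissible. In that case $t\mapsto v(\lambda t)$ with $\lambda<1$ generally leaves $W^{1,p}((0,R),\phi^{N-1})$. Near $t=R$ it tends to $v(\lambda R)$, which is typically nonzero, while $\int^R\phi^{N-1}\,\mathrm dt=\infty$. The dilation is also unnecessary. Elements of $\mathfrak C^{1,p}(M)$ only need to be smooth on the open manifold. On $(0,R)$ the weight $\phi^{N-1}$ is locally bounded above and below, so a Meyers--Serrin mollification with a locally finite partition of unity already gives $C^\infty(0,R)$ approximants. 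Your cutoff at $0$ then finishes the argument.
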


\begin{proof}
\textit{(a)} Using Theorem \ref{theouv} and \eqref{layercake}, we have
\begin{align*}
\|\nabla^ju\|^p_{L^p(B_R)}&=\int_{B_R}|\nabla^ju(x)|_g^p\mathrm dV_g\leq \omega_{N-1}\int_0^R|v^{(j)}(t)|^p\phi^{N-1}(t)\mathrm dt\\
&=\omega_{N-1}\|v^{(j)}\|^p_{L^p_{\phi^{N-1}}},\quad\forall j=0,\ldots,k.
\end{align*}

\noindent \textit{(b)} It similarly follows as the item \textit{(a)}, but using that in Theorem \ref{theouv} the equality holds for $k=0$ and $k=1$. The equality $|\nabla^0u(x)|_g=|v^{(0)}(r)|$ is trivial and the case $k=1$ follows from
\begin{equation*}
\left(\nabla u\right)_i=\dfrac{\partial u}{\partial\theta_i}=\delta_{i1}v'(r)
\end{equation*}
and
\begin{equation*}
\left|\nabla u\right|_g^2=\sum_{i,j=1}^Ng^{ij}\left(\nabla u\right)_i\left(\nabla u\right)_j=|v'(r)|^2.
\end{equation*}
\end{proof}

In the next proposition, we impose some technical assumptions on $\phi$ and make use of a Hardy-type inequality (Proposition \ref{prophardy}) that will be established in the following section.

\begin{prop}\label{propb}
Let $R\in(0,\infty)$, $k\geq2$, and $p\geq1$. Assume that $|\phi'(r)|$ and $|\frac{\mathrm d^i}{\mathrm dr^i}(\phi''(r)\phi(r))|$ are bounded on $r\in(0,R)$ for every $i=0,\ldots,k-3$. Moreover, suppose that
\begin{equation*}
0<\liminf_{r\to R}\phi(r)\leq\limsup_{r\to R}\phi(r)<\infty.   
\end{equation*}
If $N>(k-1)p$, then $W^{k,p}_{\mathrm{rad}}(B_R)\equiv W^{k,p}((0,R),\phi^{N-1})$.
\end{prop}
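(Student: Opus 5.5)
The plan is to prove the reverse of the continuous embedding in Proposition~\ref{propa}(a). For a radial $u(x)=v(r)$ with $v\in W^{k,p}((0,R),\phi^{N-1})$, we show $u\in W^{k,p}(M)$ and
\[
\|u\|_{W^{k,p}(M)}\le C\|v\|_{W^{k,p}_{\phi^{N-1}}}.
\]
The argument has three steps: a pointwise bound on $|\nabla^j u|_g$ in terms of $v',\dots,v^{(j)}$ with negative powers of $\phi$, a one-dimensional Hardy inequality to absorb those powers, and a density argument to handle the completion definition of $W^{k,p}(M)$.

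\emph{Step 1 (structure of $\nabla^j u$).} Take $v$ smooth on $(0,R)$ and use the Christoffel symbols \eqref{christoffel}. By induction on $j$ we show that every component $(\nabla^j u)_{i_1\cdots i_j}$ is a finite sum of terms
\[
c(r)\,v^{(i)}(r)\,T(\theta)\,\phi(r)^{m},
\]
where $T$ is built from $\widetilde g$ and $\widetilde\Gamma$. The angular indices are what produce the weights. Each differentiation either hits $v$, or brings in $\Gamma_{i1}^k=\delta_{ik}\phi'/\phi$ or $\Gamma^1_{ij}=-\phi\phi'\widetilde g_{ij}$. Derivatives of $\phi'/\phi$ produce $(\phi\phi''-\phi'^2)/\phi^2$.

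After contracting with $g^{i i}=\phi^{-2}\widetilde g^{ii}$ on the angular slots, the powers of $\phi$ combine into the bound
\[
|\nabla^j u|_g\le C\sum_{i=1}^{j}|v^{(i)}(r)|\,\phi(r)^{-(j-i)},\qquad j=1,\dots,k.
\]
The constant $C$ depends only on $\sup|\phi'|$ and $\sup|\tfrac{\mathrm d^{i}}{\mathrm dr^{i}}(\phi''\phi)|$ for $i\le k-3$, which is exactly the hypothesis. One index count to record: $v^{(i)}$ enters $\nabla^j u$ with $j-i$ factors of $1/\phi$, so at most $j-2$ derivatives ever fall on $\phi\phi''$. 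The same computation should also show that $v^{(i)}$ genuinely appears with $\phi^{i-j}$, e.g.\ the trace term $v'\phi'/\phi$ for $j=2$. That is the source of the necessity direction in Theorem~\ref{theo10}(3), although it is not needed here.

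\emph{Step 2 (Hardy reduction).} By \eqref{layercake} it suffices to bound
\[
\int_0^R|v^{(i)}|^p\phi^{N-1-(j-i)p}\,\mathrm dt \le C\|v\|^p_{W^{k,p}_{\phi^{N-1}}}\quad\text{for } 1\le i<j\le k.
\]
Near $R$, the hypothesis $0<\liminf\phi\le\limsup\phi<\infty$ together with continuity makes $\phi$ comparable to a positive constant on $[R/2,R)$, so there is nothing to prove. Near $0$, $\phi(r)\sim r$ by \eqref{fsmooth}. We apply the Hardy-type inequality (Proposition~\ref{prophardy}) in the form
\[
\int_0^{a}|w|^p r^{N-1-mp}\,\mathrm dr\le C\Big(\int_0^{a}|w'|^p r^{N-1-(m-1)p}\,\mathrm dr+\int_{a/2}^{a}|w|^p\,\mathrm dr\Big),
\]
which is valid without boundary conditions at $0$ whenever $N-mp>0$. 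We iterate it with $w=v^{(i)}$, $m=j-i$, lowering the exponent one derivative at a time until the weight becomes $r^{N-1}$. The worst case is $i=1$, $j=k$, so $m=k-1$, which is exactly where $N>(k-1)p$ is used.

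\emph{Step 3 (density).} This is the main obstacle, because $W^{k,p}(M)$ is defined as a completion of $\mathfrak C^{k,p}(M)$ and $u=v(r)$ need not be smooth at $o$. The first attempt will be the cut-off $v_n=\eta_n v_\varepsilon$. Here $v_\varepsilon$ is a mollification of $v$ on $(0,R)$ in $r$, and $\eta_n(r)=\eta(nr)$ vanishes near $0$, so $u_n(x)=v_n(r)$ is smooth on $M$ and lies in $\mathfrak C^{k,p}(M)$. The resulting error terms are
\[
\eta_n^{(m)}v^{(i)}\phi^{-(j-i-m)}\quad\text{with}\quad|\eta_n^{(m)}|\lesssim n^{m}\mathbf 1_{[1/n,2/n]}.
\]
These are controlled by the same weighted Hardy integrals restricted to $(0,2/n)$, which tend to $0$ by absolute continuity once Step~2 shows they are finite. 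This again uses $N>(k-1)p$; in the top-order case $m=k-1$, the term $n^{(k-1)p}\int_{1/n}^{2/n}|v'|^p r^{N-1}$ is dominated by $\int_0^{2/n}|v'|^p r^{N-1-(k-1)p}$. Then $(u_n)$ is Cauchy in $W^{k,p}$ by Steps~1–2 and converges to $u$ in $L^p$, so $u\in W^{k,p}(M)$ with the stated bound. Combined with Proposition~\ref{propa}(a), this gives the equivalence of norms.
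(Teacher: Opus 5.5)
Your Steps 1 and 2 follow the paper's argument. That is, the inductive description of the components of $\nabla^k u$ through \eqref{christoffel}, the pointwise bound $|\nabla^k u|_g\le C\sum_{j=1}^k|v^{(j)}|\phi^{-(k-j)}$, and the Hardy inequality of Proposition~\ref{prophardy}, whose worst case $v'/\phi^{k-1}$ is exactly where $N>(k-1)p$ enters. There is one small slip in Step 1: a factor $(\phi\phi'')^{(a)}$ comes with $\phi^{-(a+2)}$, and only $j-1$ inverse powers are available, so at most $j-3$ (not $j-2$) derivatives fall on $\phi\phi''$. That is why $i\le k-3$ suffices.

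The paper's proof does not address the completion issue, so Step 3 is your own addition. As written, it fails on part of the admissible range. With $v_n=\eta_n v_\varepsilon$, the Leibniz expansion of $v_n^{(l)}$ contains $\eta_n^{(l)}v_\varepsilon$, with no derivative left on $v$. In $\nabla^k u_n$ these give terms $\eta_n^{(m)}v\,\phi^{-(k-m)}$ of size $|v|\,r^{-k}$ on $[1/n,2/n]$. Controlling them would require $\int_0^{2/n}|v|^p r^{N-1-kp}\,\mathrm dr\to0$, a Hardy integral of order $k$ that is available only when $N>kp$. Your Step 2 treats only $v^{(i)}$ with $i\ge1$, and your ``top-order case $m=k-1$'' overlooks $m=k$.

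For instance, take $v\equiv1$. Then $u_n=\eta_n(r)$, and by \eqref{d456}, \eqref{layercake} and $\phi(r)\ge C_1r$,
\[
\|\nabla^k u_n\|_{L^p(M)}^p\ge\omega_{N-1}\int_{1/n}^{2/n}n^{kp}|\eta^{(k)}(nr)|^p\phi^{N-1}(r)\,\mathrm dr\ge c\,n^{kp-N}.
\]
This does not tend to $0$ when $(k-1)p<N\le kp$, although $\nabla^k u=0$. For $kp>N$ no better cutoff can help. Near $o$, $W^{k,p}(M)$ embeds into continuous functions, so a $W^{k,p}$-limit of functions vanishing near $o$ must vanish at $o$.

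The repair is to approximate by functions that are constant, rather than zero, near $o$; that is, cut off $v'$ instead of $v$. Fix $r_0\in(0,R)$ and, for $2/n<r_0$, set
\[
v_n(r)=v_\varepsilon(r_0)-\int_r^{r_0}\eta_n(s)v_\varepsilon'(s)\,\mathrm ds.
\]
Then $v_n=v_\varepsilon$ on $[2/n,R)$ and $v_n$ is constant on $(0,1/n]$, so $u_n(x)=v_n(r)$ is smooth on $M$. Moreover $v_n^{(l)}=(\eta_n v_\varepsilon')^{(l-1)}$ for $l\ge1$. Hence every error term in $\nabla^j(u_n-u_\varepsilon)$ has the form $\eta_n^{(m)}v_\varepsilon^{(i)}\phi^{-(j-i-m)}$ with $i\ge1$. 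Each is dominated on $(0,2/n)$ by $|v_\varepsilon^{(i)}|\,r^{-(j-i)}$ with $j-i\le k-1$.

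These are precisely the Hardy integrals of your Step 2, which are finite under $N>(k-1)p$ and therefore vanish in the limit by absolute continuity. For the zeroth-order term, $|v_n-v_\varepsilon|\le\int_r^{2/n}|v_\varepsilon'(s)|\,\mathrm ds$ on $(0,2/n)$, which tends to $0$ in $L^p_{\phi^{N-1}}$ by the Hardy inequality. With this change your argument is complete, and Step 3 then supplies a justification that the paper's proof leaves implicit.
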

\begin{proof}
By item \textit{(a)} of Proposition \ref{propa}, one of the desired continuous embeddings is already established. It remains to prove the converse embedding,
\begin{equation*}
W^{k,p}((0,R),\phi^{N-1})\hookrightarrow W^{k,p}_{\mathrm{rad}}(M).
\end{equation*}
Fix $v\in W^{k,p}((0,R),\phi^{N-1})$ and define $u(x)=v(r)$. We will prove, by induction on $k$, that $(\nabla^ku)_{i_1\ldots i_k}$ can be expressed as a linear combination of terms of the form
\begin{equation}\label{termsnablau}
\dfrac{\left(\phi'(r)\right)^{\ell_k}\Phi_{\gamma_k}(r)PD^{m_k}(\widetilde g)v^{(j_k)}(r)}{\phi^{n_k}(r)},
\end{equation}
where the indices satisfy
\begin{equation*}
1\leq j_k\leq k,\  0\leq\ell_k\leq k-1,\ 0\leq m_k\leq k-2,\  1-k\leq n_k\leq k-j_k-m_k,
\end{equation*}
and where:
\begin{itemize}
\item $\Phi_{\gamma_k}(r)$ is a product of powers of derivatives of $\phi''(r)\phi(r)$;
\item $PD^{m_k}(\widetilde g)$ is a product of derivatives of $\widetilde g$ up to order $m_k$.
\end{itemize}
More explicitly,
\begin{equation*}
\Phi_{\gamma_k}(r)=\left(\phi''(r)\phi(r)\right)^{\gamma_{k0}}\left[\left(\phi''(r)\phi(r)\right)'\right]^{\gamma_{k1}}\cdots\left[\left(\phi''(r)\phi(r)\right)^{(k-3)}\right]^{\gamma_{kk-3}},
\end{equation*}
where $\gamma_k=(\gamma_{k0},\ldots,\gamma_{kk-3})\in(\mathbb N\cup\{0\})^{k-2}$ is a multi-index such that $|\gamma_k|:=\gamma_{k0}+\cdots+\gamma_{kk-3}\leq k-2$. Moreover,
\begin{equation}\label{PDmk}
PD^{m_k}(\widetilde g)=D^{\beta_1}\left(\widetilde g_{a_1b_1}\right)\cdots D^{\beta_n}\left(\widetilde g_{a_nb_n}\right),
\end{equation}
where $\beta_1,\ldots,\beta_n$ are multi-indices whose entries, as well as $a_1,b_1,\ldots,a_n,b_n$, all belong to $\{i_1,\ldots,i_k\}\cap\{2,\ldots,N\}$, and $m_k:=|\beta_1|+\cdots+|\beta_n|$ is the total number of partial derivatives on $PD^{m_k}(\widetilde g)$.

We proceed by induction on $k$. For $k=2$, the claim is immediate since
\begin{equation*}
(\nabla^2 u)_{i_1i_2}=\delta_{1i_1}\delta_{1i_2}v''(r)+(1-\delta_{1i_1})(1-\delta_{1i_2})\phi(r)\phi'(r)\widetilde g_{i_1i_2}v'(r).
\end{equation*}
Assume the statement holds for some $k$; we prove it for $k+1$. Recall the recursive formula for the covariant derivative:
\begin{equation*}
    (\nabla^{k+1}u)_{i_1\ldots i_{k+1}}=\dfrac{\partial (\nabla^{k}u)_{i_2\ldots i_{k+1}}}{\partial \theta_{i_1}}-\sum_{\ell=2}^{k+1}\sum_{\alpha=1}^N\Gamma^\alpha_{i_1i_\ell}\left(\nabla^k u\right)_{i_2\cdots i_{\ell-1}\alpha i_{\ell+1}\cdots i_{k+1}.}
\end{equation*}
By the induction hypothesis, $(\nabla^ku)_{i_1\ldots i_{k}}$ is a linear combination of terms of the type \eqref{termsnablau}. Thus, the recursive formula yields contributions in $(\nabla^{k+1}u)_{i_1\ldots i_{k+1}}$ of the form
\begin{equation}\label{dtn}
\dfrac{\partial}{\partial \theta_{i_1}}\left[\dfrac{\left(\phi'(r)\right)^{\ell_k}\Phi_{\gamma_k}(r)PD^{m_k}(\widetilde g)v^{(j_k)}(r)}{\phi^{n_k}(r)}\right]
\end{equation}
and
\begin{equation}\label{ctn}
\Gamma_{i_1i_\ell}^\alpha\dfrac{\left(\phi'(r)\right)^{\ell_k}\Phi_{\gamma_k}(r)PD^{m_k}(\widetilde g)v^{(j_k)}(r)}{\phi^{n_k}(r)}.
\end{equation}
Using \eqref{ctn} together with the explicit formulas for the Christoffel symbols \eqref{christoffel}, we obtain, up to a constant, terms of the following three different forms
\begin{equation*}
\dfrac{\left(\phi'(r)\right)^{\ell_k}\Phi_{\gamma_k}(r)\left[\frac{\partial}{\partial\theta_a}\widetilde g_{bc}PD^{m_k}(\widetilde g)\right]v^{(j_k)}(r)}{\phi^{n_k}(r)},
\end{equation*}
where we used above that $\widetilde\Gamma_{ij}^k$ can be written as a sum of terms of the form $\frac{\partial}{\partial\theta_a}\widetilde g_{bc}$,
\begin{equation*}
\dfrac{\left(\phi'(r)\right)^{\ell_k+1}\Phi_{\gamma_k}(r)PD^{m_k}(\widetilde g)v^{(j_k)}(r)}{\phi^{n_k+1}(r)},
\end{equation*}
and
\begin{equation*}
\dfrac{\left(\phi'(r)\right)^{\ell_k+1}\Phi_{\gamma_k}(r)\left[\widetilde g_{ab}PD^{m_k}(\widetilde g)\right]v^{(j_k)}(r)}{\phi^{n_k-1}(r)}
\end{equation*}
appear on the expression of $(\nabla^{k+1}u)_{i_1\cdots i_{k+1}}$. Hence, to complete the induction step, it remains to check that the derivative term \eqref{dtn} has the desired structure given by \eqref{termsnablau}.

\smallskip

\noindent\textbf{Case 1:} $i_1\geq2$.\\
Note that, in this case,
\begin{equation*}
\eqref{dtn}=\dfrac{\left(\phi'(r)\right)^{\ell_k}\Phi_{\gamma_k}(r)\frac{\partial}{\partial\theta_{i_1}}\left(PD^{m_k}(\widetilde g)\right)v^{(j_k)}(r)}{\phi^{n_k}(r)}.
\end{equation*}
By the product rule, this is a linear combination of terms of the form
\begin{equation*}
\dfrac{\left(\phi'(r)\right)^{\ell_k}\Phi_{\gamma_k}(r)PD^{m_k+1}(\widetilde g)v^{(j_k)}(r)}{\phi^{n_k}(r)}
\end{equation*}
with $n_k\leq k-j_k-m_k=(k+1)-j_k-(m_k+1)$. Thus, the claim holds when $i_1\geq2$.

\smallskip

\noindent\textbf{Case 2:} $i_1=1$.\\
In this case,
\begin{align*}
\eqref{dtn}&=\ell_k\dfrac{\left(\phi'(r)\right)^{\ell_k-1}\left[\phi''(r)\phi(r)\Phi_{\gamma_k}(r)\right]PD^{m_k}(\widetilde g)v^{(j_k)}(r)}{\phi^{n_k+1}(r)}\\
&\quad+\dfrac{\left(\phi'(r)\right)^{\ell_k}\frac{\mathrm d}{\mathrm dr}\left(\Phi_{\gamma_k}(r)\right)PD^{m_k}(\widetilde g)v^{(j_k)}(r)}{\phi^{n_k}(r)}\\
&\quad+\dfrac{\left(\phi'(r)\right)^{\ell_k}\Phi_{\gamma_k}(r)PD^{m_k}(\widetilde g)v^{(j_k+1)}(r)}{\phi^{n_k}(r)}\\
&\quad-n_k\dfrac{\left(\phi'(r)\right)^{\ell_k+1}\Phi_{\gamma_k}(r)PD^{m_k}(\widetilde g)v^{(j_k)}(r)}{\phi^{n_k+1}(r)}
\end{align*}
The first, third, and last terms already have the desired form. Lastly, the second term can again be expanded by the product rule into a linear combination of factors of the desired type. Hence, the claim also holds for $i_1=1$. This completes the induction and proves that $(\nabla^ku)_{i_1\ldots i_k}$ can be written, up to constants, as a sum of terms of the form \eqref{termsnablau}.

By our assumptions,
\begin{equation*}
\left|\left(\phi'(r)\right)^{\ell_k}\Phi_{\gamma_k}(r)PD^{m_k}(\widetilde g)\right|\leq C,\quad\forall r\in(0,R).
\end{equation*}
Combined with $n_k\leq k-j_k-m_k$, \eqref{termsnablau}, and $\phi\in L^\infty(0,R)$, this implies
\begin{equation}\label{osrs}
\left|(\nabla^ku)_{i_1\ldots i_k}\right|\leq C\sum_{j=1}^k\sum_{m_k}\dfrac{|v^{(j)}(r)|}{\phi^{k-j-m_k}(r)}.
\end{equation}
Since $m_k$ represents the number of partial derivatives of $PD^{m_k}(\widetilde g)$ acting on $\widetilde g$, the worst estimate happens when $m_k$ is maximal, i.e. $m_k=I$, where 
\begin{equation*}
I=I(i_1,\ldots,i_k)=\#\{i_\ell\colon i_\ell\geq2\}
\end{equation*}
Therefore, from \eqref{osrs},
\begin{equation*}
\left|(\nabla^ku)_{i_1\ldots i_k}\right|\leq C\sum_{j=1}^k\dfrac{|v^{(j)}(r)|}{\phi^{k-j-I}(r)}.
\end{equation*}
Finally, using the formula \eqref{normkcov}, we obtain
\begin{align*}
|\nabla^ku|^2_g&=\sum_{i_1\ldots i_kj_1\ldots j_k=1}^Ng^{i_1j_1}\cdots g^{i_kj_k}(\nabla^ku)_{i_1\ldots i_k}(\nabla^ku)_{j_1\ldots j_k}=\sum_{i_1\cdots i_k=1}^Ng^{i_1i_1}\cdots g^{i_ki_k}(\nabla^ku)^2_{i_1\ldots i_k}\\
&\leq C\sum_{i_1\cdots i_k=1}^N\phi^{-2I}(r)\left(\sum_{j=1}^k\dfrac{|v^{(j)}(r)|}{\phi^{k-j-I}(r)}\right)^2=C\left(\sum_{j=1}^k\dfrac{|v^{(j)}(r)|}{\phi^{k-j}(r)}\right)^2.
\end{align*}
Applying the Hardy inequality from Proposition \ref{prophardy} (valid since $N>(k-1)p$), we conclude that
\begin{align*}
\int_M|\nabla^ku|_g^p\mathrm dV_g\leq C\sum_{j=1}^k\int_0^R\left|\dfrac{v^{(j)}(r)}{\phi^{k-j}(r)}\right|^p\phi^{N-1}(r)\mathrm dr\leq C\|v\|^p_{W^{k,p}_{\phi^{N-1}}}.
\end{align*}
This completes the proof of the proposition.
\end{proof}

The following proposition, under the assumption $N\leq(k-1)p$, exhibits an example of a radial function on $M$ that does not belong to the Sobolev space $W^{k,p}_{\mathrm{rad}}(M)$, while its radial representative does belong to the weighted Sobolev space $W^{k,p}((0,R),\phi^{N-1})$.

\begin{prop}\label{propc}
Assume that $R\in(0,\infty)$, $k\geq2$, $\phi\in L^{N-1}(0,R)$, and $N\leq (k-1)p$. Define the function $u\colon M\to\mathbb R$ by
\begin{equation*}
u(x)=r,\quad\forall x=(r,\theta)\in [0,R)\times\mathbb S^{N-1}.
\end{equation*}
Then $u\notin W^{k,p}_{\mathrm{rad}}(M)$, while $v\in W^{k,p}((0,R),\phi^{N-1})$. In particular,
\begin{equation*}
W^{k,p}_{\mathrm{rad}}(M)\not\equiv W^{k,p}((0,R),\phi^{N-1})\mbox{ if }N\leq(k-1)p.
\end{equation*}
\end{prop}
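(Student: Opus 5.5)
We treat the two assertions separately. The claim $v\in W^{k,p}((0,R),\phi^{N-1})$ is immediate: here $v(t)=t$, so $v'=1$ and $v^{(j)}=0$ for $j\geq2$. Hence
\begin{equation*}
\|v\|^p_{W^{k,p}_{\phi^{N-1}}}\leq C\int_0^R(t^p+1)\phi^{N-1}(t)\,\mathrm dt<\infty,
\end{equation*}
because $R<\infty$ and $\phi\in L^{N-1}(0,R)$.

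The substantive part is $u\notin W^{k,p}_{\mathrm{rad}}(M)$. We will show that already $\nabla^2u$ fails to be $L^p$-integrable to the right power of $r$ near $o$. By contradiction, it then suffices to show $\int_M|\nabla^k u|_g^p\,\mathrm dV_g=\infty$ on a neighbourhood of the origin. On $M\backslash\{o\}$ the function $u$ is smooth, so the covariant derivatives are classical there. If $u\in W^{k,p}(M)$, its weak covariant derivatives would coincide a.e. with these classical ones, since the $L^p$-limits of $\nabla^j u_n$ agree with the classical derivatives away from $o$ by local uniqueness of distributional derivatives. Thus it suffices to estimate the classical $\nabla^ku$ on $(0,\varepsilon)\times\mathbb S^{N-1}$.

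Next we compute $\nabla^ku$ on $(0,\varepsilon)\times\mathbb S^{N-1}$. From the formula in the proof of Proposition \ref{propb},
\begin{equation*}
(\nabla^2u)_{ij}=\phi\phi'\,\widetilde g_{ij}\quad\mbox{for } i,j\geq2,
\end{equation*}
with all other components zero; this is the Hessian $\frac{\phi'}{\phi}\,g|_{\mathbb S}$. The key step is to identify a single component of $\nabla^k u$ that behaves like $\phi^{2-k}\sim r^{2-k}$ in the sense that $|\nabla^ku|_g\geq c\,r^{1-k}$ near $o$. Since $\phi(r)=r+O(r^3)$ and $\phi'(r)=1+O(r^2)$, the geometry near $o$ is Euclidean to leading order. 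In Euclidean space $\nabla^k|x|$ is homogeneous of degree $1-k$ and nonvanishing, so $|\nabla^k|x||\geq c|x|^{1-k}$. I would make this rigorous by following the recursion: apply $\nabla_1$ to the angular components of $\nabla^2u$, and alternate with angular Christoffel contractions $\Gamma^1_{ij}=-\phi\phi'\widetilde g_{ij}$ and $\Gamma^k_{i1}=\delta_{ik}\phi'/\phi$. Tracking the leading coefficient, a suitable component (for instance $(\nabla^ku)_{1\cdots1\,ii}$ with $i\geq2$) equals $c_k\,\phi'^{\,?}\phi^{3-k}\widetilde g_{ii}+O(r^{4-k})$ with $c_k\neq0$; in Euclidean space this is $\partial_r^{k-2}(1/r)\cdot r^2$-type, with $c_k=(-1)^{k}(k-2)!$. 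Contracting with $g^{ii}g^{ii}=\phi^{-4}\widetilde g^{ii}\widetilde g^{ii}$ then gives
\begin{equation*}
|\nabla^k u|_g\geq c\,r^{1-k}\quad\mbox{for } 0<r<\varepsilon.
\end{equation*}
A cleaner alternative is to argue that the smooth chart identifies a neighbourhood of $o$ with a Euclidean ball carrying a metric $g=\delta+O(|x|^2)$. Then $\nabla^k u-D^k|x|=O(|x|^{3-k})$ by induction, since the Christoffel symbols are $O(|x|)$ and $D^j|x|=O(|x|^{1-j})$. The Euclidean lower bound then transfers. I expect this leading-order bookkeeping to be the main obstacle, and I would try the chart/perturbation argument first.

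Finally, by \eqref{layercake} and $\phi(t)\geq t/2$ near $0$,
\begin{equation*}
\int_{B_\varepsilon}|\nabla^ku|_g^p\,\mathrm dV_g\geq c\int_0^\varepsilon t^{(1-k)p}\,t^{N-1}\,\mathrm dt=\infty,
\end{equation*}
because $(1-k)p+N-1\leq-1$ precisely when $N\leq(k-1)p$. This contradicts $u\in W^{k,p}(M)$. The non-equivalence of the two spaces follows, which is consistent with the sharpness of the Hardy step in Proposition \ref{propb}.
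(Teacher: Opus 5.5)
Your proof is correct. The polar-coordinate route you sketch first is the paper's own proof: it isolates $(\nabla^ku)_{1\cdots1ij}$, shows by induction that it equals $(-1)^k(k-2)!\,(\phi')^{k-1}\phi^{3-k}\widetilde g_{ij}+o(\phi^{3-k})$, contracts with $g^{22}g^{22}$ to get $|\nabla^ku|_g\geq c\,\phi^{1-k}$ near $o$, and then integrates.

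The bookkeeping you worry about is actually trivial, because for this component the recursion closes. Since $\Gamma^\alpha_{11}=\Gamma^1_{1i}=0$ and $\Gamma^\alpha_{1i}=\delta_{i\alpha}\phi'/\phi$ for $i,\alpha\geq2$,
\[
(\nabla^{k+1}u)_{1\cdots1ij}=\partial_r(\nabla^ku)_{1\cdots1ij}-2\,\frac{\phi'}{\phi}\,(\nabla^ku)_{1\cdots1ij}.
\]
Hence $(\nabla^ku)_{1\cdots1ij}=\phi^2\,(\phi'/\phi)^{(k-2)}\,\widetilde g_{ij}$ exactly. It follows that $|\nabla^ku|_g\geq|(\phi'/\phi)^{(k-2)}|=(k-2)!\,r^{1-k}+O(1)$, since $\phi'/\phi-1/r$ is smooth on $[0,R)$. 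This bound is uniform in $\theta$ and cleaner than the paper's $o(\phi^{3-k})$ argument.

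Your preferred Cartesian perturbation route is genuinely different, and it also works. Each term of $\nabla^ku-D^k|x|$ is a contraction of $\partial^{a_1}\Gamma\cdots\partial^{a_m}\Gamma\,\partial^bu$ with $m\geq1$ and $\sum_i(a_i+1)+b=k$. Since $|\partial^a\Gamma|\leq C|x|^{\max(1-a,0)}$ and $(a+1)+\max(1-a,0)\geq2$, each such term is $O(|x|^{3-k})$. The route does need the Euclidean fact $|D^k|x||=c_k|x|^{1-k}$ with $c_k>0$, which you should justify. The norm is rotation invariant and homogeneous of degree $1-k$, and $c_k=0$ would make $|x|$ a polynomial on the connected set $\mathbb R^N\setminus\{0\}$. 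This route avoids spherical coordinates on $\mathbb S^{N-1}$ and would carry over to normal coordinates on a general manifold. The polar route instead gives an explicit formula with no Euclidean input.

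Two small remarks:
\begin{enumerate}
\item Your opening sentence, that already $\nabla^2u$ fails to be integrable, is false in general and is never used. In fact $|\nabla^2u|_g=\sqrt{N-1}\,\phi'/\phi$ lies in $L^p$ near $o$ whenever $N>p$, which is compatible with $N\leq(k-1)p$ once $k\geq3$.
\item Your observation that weak and classical covariant derivatives agree away from $o$ is needed, because $W^{k,p}(M)$ is defined as a completion. It fills a point the paper passes over silently.
\end{enumerate}
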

\begin{proof}
It suffices to show that $u\notin W^{k,p}_{\mathrm{rad}}(M)$, because $v\in W^{k,p}((0,R),\phi^{N-1})$ follows immediately from $v(t)=t$ and the assumption $\phi\in L^{N-1}(0,R)$.

Arguing as in the proof of the previous proposition,one can show, by induction on $k$, that each component $(\nabla^ku)_{i_1\ldots i_k}$ can be written as a linear combination of terms of the form
\begin{equation}\label{tnu}
\dfrac{\left(\phi'(r)\right)^{\ell_k}\Psi_{\gamma_k}(r)PD^{m_k}(\widetilde g)}{\phi^{n_k}(r)},
\end{equation}
where the indices satisfy
\begin{equation*}
0\leq\ell_k\leq k-1,\ 0\leq m_k\leq k-2,\ 1-k\leq n_k\leq k-3,
\end{equation*}
Here $PD^{m_k}(\widetilde g)$ is defined as in \eqref{PDmk} and
\begin{equation*}
\Psi_{\gamma_k}(r)=\left(\phi''(r)\right)^{\gamma_{k2}}\cdots\left(\phi^{(k-1)}(r)\right)^{\gamma_{kk-1}},
\end{equation*}
where $\gamma_k=(\gamma_{k2},\ldots,\gamma_{kk-1})\in\left(\mathbb N\cup\{0\}\right)^{k-2}$ is a multi-index such that $|\gamma_k|:=|\gamma_{k2}|+\cdots+|\gamma_{kk-1}|\leq k-2$. We omit the proof of this claim, as it is simpler and analogous to the proof of \eqref{termsnablau}. We only note that each term of the form \eqref{tnu} appearing in $(\nabla^ku)_{i_1\ldots i_k}$ gives rise, in $(\nabla^{k+1}u)_{i_1\ldots i_{k+1}}$, to the following four types of terms arising from the derivative term $\frac{\partial}{\partial\theta_a}\left(\nabla^ku\right)_{i_1\ldots i_k}$:
\begin{gather*}
    \dfrac{\left(\phi'(r)\right)^{\ell_k}\Psi_{\gamma_k}(r)\frac{\partial}{\partial\theta_a}\left(PD^{m_k}(\widetilde g)\right)}{\phi^{n_k}(r)},\\
    \dfrac{\left(\phi'(r)\right)^{\ell_k-1}\phi''(r)\Psi_{\gamma_k}(r)PD^{m_k}(\widetilde g)}{\phi^{n_k}(r)},\\
    \dfrac{\left(\phi'(r)\right)^{\ell_k}\frac{\mathrm d}{\mathrm dr}\Psi_{\gamma_k}(r)PD^{m_k}(\widetilde g)}{\phi^{n_k}(r)},\\
    \dfrac{\left(\phi'(r)\right)^{\ell_k+1}\Psi_{\gamma_k}(r)PD^{m_k}(\widetilde g)}{\phi^{n_k+1}(r)},
\end{gather*}
and to the following three types of terms arising from the Christoffel symbol contribution $\Gamma_{ij}^\ell\left(\nabla^ku\right)_{i_1\ldots i_k}$:
\begin{gather*}
\dfrac{\left(\phi'(r)\right)^{\ell_k}\Psi_{\gamma_k}(r)\left[\frac{\partial}{\partial\theta_a}\widetilde g_{bc}PD^{m_k}(\widetilde g)\right]}{\phi^{n_k}(r)},\\
\dfrac{\left(\phi'(r)\right)^{\ell_k+1}\Psi_{\gamma_k}(r)PD^{m_k}(\widetilde g)}{\phi^{n_k+1}(r)},\\
\dfrac{\left(\phi'(r)\right)^{\ell_k+1}\Psi_{\gamma_k}(r)PD^{m_k}(\widetilde g)}{\phi^{n_k-1}(r)}.
\end{gather*}
Hence, the growth of $(\nabla^{k+1}u)_{i_1\ldots i_{k+1}}$ is of order $O\left(\phi^{-n_k-1}(r)\right)$.

With this observation, we claim that
\begin{equation}\label{phi+o}
\left(\nabla^ku\right)_{1\ldots1ij}=(-1)^k(k-2)!\dfrac{\left(\phi'(r)\right)^{k-1}}{\phi^{k-3}(r)}\widetilde g_{ij}+o\left(\phi^{3-k}(r)\right),\quad\forall i,j=2,\ldots,N,
\end{equation}
as $r\to0$. Here the indices $1\ldots1ij$ indicate that all indices are equal to 1 except the last two. 

We prove \eqref{phi+o} by induction on $k$. For $k=2$, the claim follows directly from
\begin{equation*}
\left(\nabla^2u\right)_{ij}=\phi'(r)\phi(r)\widetilde g_{ij},\quad\forall i,j=2,\ldots,N.
\end{equation*}
Assume that \eqref{phi+o} holds for some $k\geq2$. Using the recurssive formula of the covariant derivative, we obtain
\begin{align*}
\left(\nabla^{k+1}u\right)_{1\ldots1ij}&=\dfrac{\partial}{\partial\theta_1}\left(\nabla^ku\right)_{1\ldots1ij}-\sum_{\alpha=2}^N\Gamma_{1i}^\alpha\left(\nabla^ku\right)_{1\ldots1\alpha j}-\sum_{\alpha=2}^N\Gamma_{1j}^\alpha\left(\nabla^ku\right)_{1\ldots1i\alpha}\\
&=\dfrac{\partial}{\partial r}\left(\nabla^ku\right)_{1\ldots1ij}-\left(\Gamma_{1i}^i+\Gamma^j_{1j}\right)\left(\nabla^ku\right)_{1\ldots1ij}.
\end{align*}
By the observation regarding the proof of \eqref{tnu}, all resulting terms is of order $o\left(\phi^{2-k}(r)\right)$, except those involving $(\phi'(r))^k\widetilde g_{ij}/\phi^{k-2}(r)$. Thus,
\begin{align*}
\left(\nabla^{k+1}u\right)_{1\ldots1ij}&=(-1)^k(k-2)!(3-k)\dfrac{\left(\phi'(r)\right)^k}{\phi^{k-2}(r)}\widetilde g_{ij}-2(-1)^k(k-2)!\dfrac{\left(\phi'(r)\right)^{k}}{\phi^{k-2}(r)}\widetilde g_{ij}+o\left(\phi^{2-k}(r)\right)\\
&=(-1)^{k+1}(k-1)!\dfrac{\left(\phi'(r)\right)^k}{\phi^{k-2}(r)}\widetilde g_{ij}+o\left(\phi^{2-k}(r)\right).
\end{align*}
This completes the induction proof of \eqref{phi+o}.

As a consequence of \eqref{phi+o}, there exists $R_0\in(0,R)$ small such that
\begin{equation*}
\left|\left(\nabla^ku\right)_{1\ldots122}\right|\geq(k-2)!\dfrac{\left(\phi'(r)\right)^{k-1}}{\phi^{k-3}(r)}\widetilde g_{22},\quad\forall x=(r,\theta)\in(0,R_0)\times\mathbb S^{N-1}.
\end{equation*}
Since $\phi'(0)=1$, possibly reducing $R_0$, we may assume
\begin{equation*}
\left|\left(\nabla^ku\right)_{1\ldots122}\right|\geq\dfrac{(k-2)!}{2^{k-1}}\dfrac{\widetilde g_{22}}{\phi^{k-3}(r)},\quad\forall x=(r,\theta)\in(0,R_0)\times\mathbb S^{N-1}.
\end{equation*}
Therefore, for all $x\in B_{R_0}(o)$,
\begin{equation*}
\left|\nabla^ku\right|^2_g\geq g^{11}\cdots g^{11}g^{22}g^{22}\left(\nabla^ku\right)^2_{1\ldots122}\geq\left[\dfrac{(k-2)!}{2^{k-1}\phi^{k-1}(r)}\right]^2.
\end{equation*}
Consequently,
\begin{equation*}
\int_M|\nabla^ku|_g^p\mathrm dV_g\geq\int_{B_{R_0}(o)}|\nabla^ku|_g^p\mathrm dV_g\geq\left(\dfrac{(k-2)!}{2^{k-1}}\right)^p\int_0^{R_0}\phi^{N-1-(k-1)p}(r)\mathrm dr.
\end{equation*}
Since $N\leq(k-1)p$, the integral diverges, and hence $u\notin W^{k,p}_{\mathrm{rad}}(M)$. This completes the proof.
\end{proof}
\begin{proof}[Proof of Theorem~\ref{theo10}]
The theorem follows directly from Propositions~\ref{propa}, \ref{propb}, and~\ref{propc}.
\end{proof}

\section{Radial Lemmas}\label{sec5}

Throughout this section, we assume $R\in(0,\infty)$, unless stated otherwise. Our main objective is to establish radial lemmas for $u\in W^{k,p}_{\mathrm{rad}}(M)$ and $v\in W^{k,p}((0,R),\phi^{N-1})$. Under suitable assumptions on the weight function $\phi$, the weighted Sobolev space $W^{k,p}((0,R),\phi^{N-1})$ turns out to be equivalent to the Euclidean weighted Sobolev space, corresponding to $\phi(t)=t$. This equivalence allows us to transfer the known Euclidean radial lemmas, together with a Hardy-type inequality, to the general weighted setting. As a consequence, we obtain the main results of this section, namely Theorem~\ref{theo11} and Corollary~\ref{corcompact}, which guarantee the continuous and compact embeddings of $W^{k,p}_{\mathrm{rad}}(M)$ into $L^q_{\phi^\theta}(M)$. 

Since $\phi(0)=0$ and $\lim_{r\to0}\frac{\phi(r)}{r}=1$, and assuming that
\begin{equation}\label{phi4}
0<\liminf_{r\to R}\phi(r)\leq\limsup_{r\to R}\phi(r)<\infty,
\end{equation}
there exist $C_1,C_2>0$ such that
\begin{equation*}
C_1\leq\dfrac{\phi(t)}{t}\leq C_2,\quad\forall t\in(0,R).
\end{equation*}
Consequently, the equivalence of weighted Sobolev spaces holds:
\begin{equation}\label{equivWw}
W^{k,p}((0,R),\phi^{N-1})\equiv W^{k,p}((0,R),t^{N-1}).
\end{equation}
Since the study of radial lemmas for functions belonging to the space on the right-hand side of \eqref{equivWw} is well established (see \cite[Section 2]{MR2838041}), the corresponding results in our general weighted framework follow directly. In particular, the next lemma and the following propositions are immediate consequences of those known results.

\begin{prop}\label{prop31}
Let $v\in W^{k,p}((0,R),\phi^{N-1})$ with $1\leq p<\infty$ and $R\in(0,\infty]$. Assume that \eqref{phi4} holds. Then there exists $V\in AC_{loc}^{k-1}((0,R])$ such that
\begin{equation*}
    v=V \quad \text{a.e. on } \quad (0,R).
\end{equation*}
Moreover, $V^{(k)}$ (in the classical sense) exists a.e. on $(0,R)$ and $V^{(k)}$ is a measurable function and
\begin{equation*}
\int_0^R\left|V^{(j)}(t)\right|^p\phi(t)^{N-1}\mathrm dt<\infty,\quad\forall j=0,1,\ldots,k.
\end{equation*}
\end{prop}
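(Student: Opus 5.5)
The idea is to reduce everything to one-dimensional absolute continuity on compact subintervals. The weight $\phi^{N-1}$ plays a role only near $t=0$, and near $t=R$ when $R<\infty$. The equivalence \eqref{equivWw}, which follows from \eqref{phi4} together with $\phi(r)/r\to1$ as $r\to0$, lets us replace $\phi^{N-1}$ by $t^{N-1}$. Alternatively, we can argue directly on each compact interval $[a,b]\subset(0,R)$, where $\phi$ is bounded above and below by positive constants. The steps are as follows.

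\textbf{Step 1 (local regularity).} Fix $0<a<b<R$. On $(a,b)$ we have $\phi^{N-1}\geq c_{a,b}>0$, so $v^{(j)}\in L^p(a,b)\subset L^1(a,b)$ for $j=0,\dots,k$. Hence $v|_{(a,b)}\in W^{k,1}(a,b)$. By the classical one-dimensional characterization of Sobolev functions (integrating the weak derivative and using the du Bois-Reymond lemma), $v$ agrees a.e. with a function $V_{a,b}\in C^{k-1}([a,b])$. This function has $V_{a,b}^{(k-1)}$ absolutely continuous, and $V_{a,b}^{(j)}=v^{(j)}$ a.e. for $j\le k$. Uniqueness of continuous representatives lets us glue these along an exhaustion $a_n\downarrow0$, $b_n\uparrow R$. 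This yields $V\in AC^{k-1}_{loc}((0,R))$ with $v=V$ a.e. By Lebesgue's differentiation theorem applied to the absolutely continuous function $V^{(k-1)}$, the classical derivative $V^{(k)}$ exists a.e. It is measurable, being an a.e. limit of difference quotients, and it equals $v^{(k)}$ a.e. The integrability $\int_0^R|V^{(j)}|^p\phi^{N-1}<\infty$ is then immediate from $V^{(j)}=v^{(j)}$ a.e.

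\textbf{Step 2 (extension to the endpoint $R$ when $R<\infty$).} This is the only delicate point, and I expect it to be the main obstacle. Fix any $a\in(0,R)$. By \eqref{phi4} and continuity of $\phi$ on $[a,R)$, we get $\inf_{[a,R)}\phi>0$. Indeed, positivity on compact subsets of $(a,R)$ together with $\liminf_{r\to R}\phi>0$ gives a uniform lower bound near $R$. Therefore $v^{(j)}\in L^p(a,R)\subset L^1(a,R)$, since the interval is bounded. Then for $j\le k-1$,
\begin{equation*}
V^{(j)}(t)=V^{(j)}(a)+\int_a^t V^{(j+1)}(s)\,\mathrm ds,
\end{equation*}
and the integrand is integrable up to $R$. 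So $V^{(j)}(R):=\lim_{t\to R}V^{(j)}(t)$ exists, and $V^{(k-1)}$ is absolutely continuous on $[a,R]$. This gives $V\in AC^{k-1}_{loc}((0,R])$. When $R=\infty$, the statement reduces to Step 1, read as local absolute continuity on $(0,\infty)$. No behaviour at $t=0$ is claimed, so the degeneracy of the weight at the origin never enters.
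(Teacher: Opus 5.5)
Your argument is correct, but it does not follow the paper's route. The paper gives no direct proof. From $\phi(r)/r\to1$ at the origin and \eqref{phi4} it derives $C_1t\le\phi(t)\le C_2t$ on $(0,R)$, hence the equivalence \eqref{equivWw} with the Euclidean weight $t^{N-1}$. It then imports the statement for that weight from \cite[Section 2]{MR2838041}.

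You prove the statement from scratch using only two facts. First, $\phi$ is continuous and positive on $(0,R)$, so the weight is harmless on compact subintervals and classical one-dimensional $W^{k,1}$ theory applies there. Second, $\liminf_{r\to R}\phi(r)>0$, which gives $v^{(j)}\in L^1(a,R)$ and lets the derivatives extend continuously to $t=R$. This shows that neither the upper bound in \eqref{phi4} nor the behaviour of $\phi$ at the origin matters for this regularity statement.

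Your route also handles $R=\infty$ cleanly. There, \eqref{phi4} forces $\phi$ to be bounded near infinity, so $\phi(t)/t\to0$ and the lower comparison $C_1t\le\phi(t)$ fails. The global transfer via \eqref{equivWw} is therefore unavailable, and one has to argue locally anyway, exactly as you do.

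The paper's route has its own advantage: it reduces the proposition to a citation. It also sets up \eqref{equivWw}, which the paper needs for the subsequent radial lemmas and the Hardy inequality, where the behaviour of the weight near $t=0$ genuinely matters.
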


\begin{prop}\label{proprls}
Assume that \eqref{phi4} holds. If $N>kp$, then there exists $C>0$ such that for all $v\in W^{k,p}((0,R),\phi^{N-1})$ it holds
\begin{equation}\label{rlsv}
|v(t)|\leq C\dfrac{1}{\phi(t)^{\frac{N-kp}{p}}}\|v\|_{W^{k,p}_{\phi^{N-1}}},\quad\forall t\in(0,R].
\end{equation}
\end{prop}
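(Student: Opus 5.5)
The plan is to reduce everything to the Euclidean weight $t^{N-1}$ via \eqref{equivWw} and then run the classical one-dimensional argument of de Figueiredo et al.\ (\cite[Section 2]{MR2838041}). First, under \eqref{phi4} and the normalization $\phi(0)=0$, $\phi'(0)=1$, there are constants $C_1,C_2>0$ with $C_1 t\le \phi(t)\le C_2 t$ on $(0,R)$. Consequently
\[
\|v\|_{W^{k,p}_{t^{N-1}}}\le C\|v\|_{W^{k,p}_{\phi^{N-1}}}
\qquad\text{and}\qquad
\phi(t)^{-\frac{N-kp}{p}}\ge C\, t^{-\frac{N-kp}{p}}.
\]
So it suffices to prove
\[
|v(t)|\le C\, t^{-\frac{N-kp}{p}}\,\|v\|_{W^{k,p}_{t^{N-1}}},\qquad t\in(0,R],
\]
for the representative $V\in AC^{k-1}_{loc}((0,R])$ given by Proposition \ref{prop31}.

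The one-dimensional estimate is proved by induction on $k$.

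For $k=1$ with $N>p$, pick $t\in(0,R]$. By the mean value theorem for integrals there is a point $s\in[R/2,R]$ with $|v(s)|\le C\|v\|_{L^p_{t^{N-1}}}$, using $t^{N-1}\ge (R/2)^{N-1}$ there. Writing $v(t)=v(s)-\int_t^s v'$ and applying H\"older,
\[
\int_t^R |v'|\le \Big(\int_t^R |v'|^p\tau^{N-1}\,d\tau\Big)^{1/p}\Big(\int_t^R \tau^{-\frac{N-1}{p-1}}\,d\tau\Big)^{\frac{p-1}{p}} .
\]
Since $N>p$, the last factor is bounded by $C t^{-\frac{N-p}{p}}$ (for $p=1$ use the sup of $\tau^{1-N}$ instead). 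Because $R<\infty$, we also have $1\le C t^{-\frac{N-p}{p}}$, which absorbs the $|v(s)|$ term.

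For the inductive step, apply the $(k-1)$ case, or directly the $k=1$ estimate, to $w=v^{(k-1)}\in W^{1,p}_{t^{N-1}}$. This gives a pointwise bound $|v^{(k-1)}(\tau)|\le C\tau^{-\frac{N-p}{p}}\|v\|$. Then integrate successively: each integration $v^{(j-1)}(t)=v^{(j-1)}(s)-\int_t^s v^{(j)}$ raises the exponent by one. Since $\frac{N-jp}{p}>0$ for all $j\le k$ (because $N>kp$), each integral $\int_t^R \tau^{-\frac{N-jp}{p}}\,d\tau$ is bounded by $C t^{-\frac{N-(j+1)p}{p}}$, up to the harmless borderline where the exponent equals $1$, which cannot occur for $j<k$ since $N-jp>p$. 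The boundary values $v^{(j)}(s)$ at a suitable $s\in[R/2,R]$ are controlled by averages of $\|v^{(j)}\|_{L^p_{t^{N-1}}}$.

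The main obstacle is purely technical. It consists of making the choice of $s$ uniform across derivatives and handling the endpoint $t=R$ (covered by continuity of $V$ on $(0,R]$). Alternatively, one could simply cite the Euclidean radial lemma \cite[Equation (1.1)]{MR2838041} for the radial extension of $v$ to the Euclidean ball $B_R^{\mathbb R}$, applied to its one-dimensional form in \cite[Section 2]{MR2838041}. In the writeup I would invoke that result after the weight comparison rather than redoing the induction.
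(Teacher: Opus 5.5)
Your proposal is correct and takes the paper's route. The paper also uses \eqref{phi4} to get $C_1t\le\phi(t)\le C_2t$, hence the equivalence \eqref{equivWw}, and then inherits the estimate from the Euclidean radial lemma in \cite[Section 2]{MR2838041}; your sketched induction is a valid self-contained replacement for that citation, and the ``uniform choice of $s$'' you flag is not actually needed, since each derivative $v^{(j-1)}$ may use its own point $s_j\in[R/2,R]$.
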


\begin{prop}\label{propworst}
Assume that \eqref{phi4} holds. If $N=kp$ and $p>1$, then there exists a constant $C>0$ such that for all $v\in W^{k,p}((0,R),\phi^{N-1})$ and $t\in(0,R]$ it holds
\begin{equation}\label{LR-A}
|v(t)|\leq C\left[\left(\log\frac{R}{t}\right)^{\frac{p-1}p}+1\right]\|v\|_{W^{k,p}_{\phi^{N-1}}},\quad\forall t\in(0,R].
\end{equation}
\end{prop}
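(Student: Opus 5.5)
The plan is to reduce Proposition \ref{propworst} to the Euclidean borderline radial lemma through the weight equivalence \eqref{equivWw}. Under \eqref{phi4} we have $C_1 t\leq\phi(t)\leq C_2 t$ on $(0,R)$. Hence
\begin{equation*}
C_1^{N-1}\int_0^R|v^{(j)}|^pt^{N-1}\,\mathrm dt\leq\int_0^R|v^{(j)}|^p\phi^{N-1}\,\mathrm dt\leq C_2^{N-1}\int_0^R|v^{(j)}|^pt^{N-1}\,\mathrm dt
\end{equation*}
for every $j$. This gives $\|v\|_{W^{k,p}_{t^{N-1}}}\leq C\|v\|_{W^{k,p}_{\phi^{N-1}}}$, so it suffices to prove \eqref{LR-A} with the Euclidean weight $t^{N-1}$. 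By Proposition \ref{prop31}, we may work with the representative $V\in AC^{k-1}_{loc}((0,R])$, which makes pointwise evaluation meaningful for every $t\in(0,R]$.

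With the weight $t^{N-1}$, $u(x)=v(|x|)$ defines a radial function in $W^{k,p}_{\mathbb R,\mathrm{rad}}(B_R^{\mathbb R})$. I would take this from the Euclidean characterization in \cite[Section 2]{MR2838041}; equivalently, it is Theorem \ref{theo10} in the Euclidean case, where $N=kp>(k-1)p$. The second Euclidean radial lemma quoted after Theorem \ref{theoDSM} then gives
\begin{equation*}
|v(t)|\leq C\|v\|_{W^{k,p}_{t^{N-1}}}\Big[\big(\log\tfrac Rt\big)^{\frac{p-1}p}+1\Big].
\end{equation*}
Combining this with the norm comparison proves the claim.

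If a self-contained proof is preferred, I would argue by induction on $k$ in one dimension. For $k=1$ and $N=p$, first bound $|v(R)|$ (or $|v(t_0)|$ at some fixed $t_0\in[R/2,R]$) by $C\|v\|$. This follows by averaging $v$ over $[R/2,R]$, where the weight is bounded below, together with the fundamental theorem of calculus. Then write $v(t)=v(R)-\int_t^Rv'$ and apply Hölder's inequality:
\begin{equation*}
\int_t^R|v'|\leq\Big(\int_t^R|v'|^ps^{N-1}\,\mathrm ds\Big)^{1/p}\Big(\int_t^R s^{-\frac{N-1}{p-1}}\,\mathrm ds\Big)^{\frac{p-1}p}.
\end{equation*}
When $N=p$ the exponent is $-1$, so the last factor equals $(\log\frac Rt)^{\frac{p-1}p}$.

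For $k\geq2$ I would first reduce the derivative order. Apply the subcritical estimate of Proposition \ref{proprls} to $v'$, viewed as an element of $W^{k-1,p}$ with weight $t^{N-1}$, where $N>(k-1)p$. This gives $|v'(s)|\leq Cs^{-(N-(k-1)p)/p}\|v\|=Cs^{-1}\|v\|$. Integrating would then yield a bound of order $\log$, not the sharp $\log^{(p-1)/p}$. So I would instead use the weighted Hardy inequality (Proposition \ref{prophardy}) to show that $v'\in L^p(s^{N-1-(k-1)p}\,\mathrm ds)=L^p(s^{p-1}\,\mathrm ds)$. Hölder's inequality as in the case $k=1$ then gives exactly $(\log\frac Rt)^{\frac{p-1}p}$.

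The main obstacle is this Hardy step: showing that $\int_0^R|v'|^ps^{N-1-(k-1)p}\,\mathrm ds\leq C\|v\|^p_{W^{k,p}_{t^{N-1}}}$, iterating $(k-1)$ times a Hardy inequality that lowers the weight power by $p$. It works because each intermediate exponent $N-1-jp$ with $j\leq k-2$ stays above $p-1$. The boundary terms at $R$ are controlled by the trace-type bound on $[R/2,R]$.
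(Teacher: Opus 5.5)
Your proposal is correct and follows essentially the paper's route. Under \eqref{phi4} one has $C_1t\leq\phi(t)\leq C_2t$, so $W^{k,p}((0,R),\phi^{N-1})\equiv W^{k,p}((0,R),t^{N-1})$, and the estimate is inherited from the borderline Euclidean radial lemma of \cite{MR2838041}. The paper cites that lemma directly for the one-dimensional weighted space, so your detour through $u(x)=v(|x|)$ and the reverse embedding of Theorem \ref{theo10}(3) is unnecessary, though valid since $N=kp>(k-1)p$. Your self-contained sketch is also sound: it iterates Hardy's inequality down to $v'\in L^p(s^{p-1}\,\mathrm ds)$ and then applies H\"older.
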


\begin{prop}\label{asfjanfkslns}
Assume that \eqref{phi4} holds. If $N=kp$ and $p=1$, then $W^{k,p}((0,R),\phi^{N-1})\hookrightarrow C([0,R])$. In other words, $W^{k,1}((0,R),\phi^{k-1})\hookrightarrow C([0,R])$.
\end{prop}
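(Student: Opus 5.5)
The plan is to reduce to the Euclidean weighted case $\phi(t)=t$ through the equivalence \eqref{equivWw}, and then prove continuity on $[0,R]$ by a direct one-dimensional integration argument. By \eqref{phi4} and $\phi(t)/t\to1$ as $t\to0$, we have $C_1 t\le\phi(t)\le C_2 t$ on $(0,R)$. Hence $\|v\|_{W^{k,1}_{\phi^{k-1}}}$ is equivalent to $\sum_{j=0}^k\int_0^R|v^{(j)}(t)|t^{k-1}\,\mathrm dt$, and it suffices to prove $W^{k,1}((0,R),t^{k-1})\hookrightarrow C([0,R])$ with a bound $\sup_{[0,R]}|V|\le C\|v\|$, where $V$ is the $AC^{k-1}_{loc}((0,R])$ representative from Proposition \ref{prop31}. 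This is exactly the one-dimensional content of item (4) of Theorem \ref{theoDSM} (see \cite[Section 2]{MR2838041}), so one may simply cite it. If a self-contained argument is wanted, I would proceed as follows.

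\textbf{Step 1: bounds near $t=R$.} On $[R/2,R]$ the weight is bounded above and below, so the unweighted one-dimensional embedding $W^{k,1}(R/2,R)\hookrightarrow C^{k-1}([R/2,R])$ applies. It gives $|V^{(j)}(R)|\le C\|v\|$ for $j=0,\ldots,k-1$, together with continuity on $[R/2,R]$.

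\textbf{Step 2: iterated Taylor representation.} For $t\in(0,R]$, Taylor's formula with integral remainder, which is valid since $V^{(k-1)}$ is absolutely continuous on $[t,R]$, gives
\begin{equation*}
V(t)=\sum_{j=0}^{k-1}\frac{V^{(j)}(R)}{j!}(t-R)^j+\frac{(-1)^k}{(k-1)!}\int_t^R (s-t)^{k-1}V^{(k)}(s)\,\mathrm ds .
\end{equation*}
For $0<t\le s$ we have $0\le (s-t)^{k-1}\le s^{k-1}$, so the remainder is bounded by $\frac1{(k-1)!}\int_0^R s^{k-1}|V^{(k)}(s)|\,\mathrm ds\le C\|v\|$. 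Combined with Step 1, this yields $|V(t)|\le C\|v\|$ uniformly on $(0,R]$.

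\textbf{Step 3: continuity at $0$.} The main point is that $V$ extends continuously to $t=0$, not merely that it stays bounded. The polynomial part is continuous on $[0,R]$. For the remainder, the integrand $(s-t)^{k-1}V^{(k)}(s)\chi_{(t,R)}(s)$ converges pointwise to $s^{k-1}V^{(k)}(s)$ as $t\to0^+$ and is dominated by the integrable function $s^{k-1}|V^{(k)}(s)|$. Dominated convergence then gives existence of $\lim_{t\to0^+}V(t)$, so we set $V(0)$ equal to this limit.

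Combining Steps 1 to 3, $V\in C([0,R])$ with $\|V\|_\infty\le C\|v\|_{W^{k,1}_{\phi^{k-1}}}$, which is the claimed continuous embedding. The only delicate step is Step 3. It works because the kernel $(s-t)^{k-1}$ is controlled by the weight $s^{k-1}$ exactly when $N=k$ and $p=1$, which is precisely why no logarithmic loss appears, in contrast with Proposition \ref{propworst}.
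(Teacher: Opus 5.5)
Your proposal is correct and follows the paper's route: the paper uses $C_1t\le\phi(t)\le C_2t$ to get the equivalence \eqref{equivWw}, and then treats the statement as an immediate consequence of the Euclidean weighted result in \cite[Section 2]{MR2838041}. Your optional self-contained argument is also valid and makes explicit what the paper leaves to that citation: Taylor expansion at $t=R$, the bound $(s-t)^{k-1}\le s^{k-1}$ on the remainder kernel, and dominated convergence as $t\to0^+$.
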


\begin{prop}\label{prophardy}
Assume that \eqref{phi4} holds. Given $j=0,\ldots,k$ with $N>jp$, there exists $C_j>0$ such that for all $v\in W^{k,p}((0,R),\phi^{N-1})$,
\begin{equation}\label{tl3}
\int_0^R\left|\dfrac{v^{(k-j)}(s)}{\phi(s)^j}\right|^p\phi(s)^{N-1}\mathrm ds\leq C_j\sum_{i=k-j}^k\int_0^R|v^{(i)}(s)|^p\phi(s)^{N-1}\mathrm ds.
\end{equation}
In particular, assuming $N>kp$,
\begin{equation*}
\left\|\dfrac{v}{\phi^k}\right\|_{L^p_{\phi^{N-1}}(0,R)}\leq C_k\|v\|_{W^{k,p}_{\phi^{N-1}}}.
\end{equation*}
\end{prop}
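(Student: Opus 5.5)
The plan is to reduce to the Euclidean weight $t^{N-1}$ via \eqref{equivWw}, and then prove a one-dimensional weighted Hardy inequality by iterating the classical first-order Hardy inequality. Since $C_1 t\le\phi(t)\le C_2 t$ on $(0,R)$ under \eqref{phi4}, both sides of \eqref{tl3} are comparable to the same expressions with $\phi(s)$ replaced by $s$. The constants depend only on $C_1,C_2,N,p,j$. So it suffices to show, for $w:=v^{(k-j)}$ and $N>jp$, that
\begin{equation*}
\int_0^R\frac{|w(s)|^p}{s^{jp}}s^{N-1}\,\mathrm ds\le C\sum_{i=0}^{j}\int_0^R|w^{(i)}(s)|^p s^{N-1}\,\mathrm ds .
\end{equation*}
By Proposition \ref{prop31} we may take $w$ to be $AC^{j-1}_{loc}((0,R])$, so all pointwise manipulations below are legitimate. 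The case $j=0$ is trivial.

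The key step is the first-order weighted Hardy inequality on a bounded interval, without any boundary condition at $R$: for $\alpha:=N-1-mp>-1$, i.e.\ $N>mp$ with $m\ge1$,
\begin{equation*}
\int_0^R\frac{|f|^p}{s^{p}}s^{\alpha}\,\mathrm ds\le C\left(\int_0^R|f'|^ps^{\alpha+p}\,\mathrm ds+\int_0^R|f|^ps^{\alpha+p}\,\mathrm ds\right).
\end{equation*}
To prove it, I would split the interval. On $(R/2,R)$ the weights $s^{\alpha}$ and $s^{\alpha+p}$ are comparable, so nothing needs to be done there. On $(0,R/2)$ I would multiply $f$ by a smooth cutoff $\eta$ with $\eta=1$ on $[0,R/2]$ and $\eta=0$ near $R$. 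Then I would write $\eta f(s)=-\int_s^R(\eta f)'$ and apply the dual Hardy inequality $\int_0^\infty s^{\alpha}\bigl|\int_s^\infty g\bigr|^p\le (p/(\alpha+1))^p\int_0^\infty s^{\alpha+p}|g|^p$. This inequality is valid precisely because $\alpha+1>0$, which is where the hypothesis $N>jp$ enters. The term involving $\eta'$ is supported away from $0$ and is bounded by $\int|f|^ps^{\alpha+p}$. For $p=1$ the same argument works via Fubini.

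Next I would iterate. Apply the lemma with $f=w^{(i)}/s^{j-1-i}$ successively, or more simply proceed by induction on $j$. At step $m$ (from $1$ to $j$), take $f=w^{(j-m)}$ and exponent $\alpha=N-1-mp$. This bounds $\int|w^{(j-m)}|^ps^{N-1-mp}$ by $\int|w^{(j-m+1)}|^ps^{N-1-(m-1)p}+\int|w^{(j-m)}|^ps^{N-1-(m-1)p}$. The constraint $\alpha>-1$ requires $N>mp$, which holds for every $m\le j$ since $N>jp$.

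The extra lower-order terms are handled by $s\le R$. Indeed $s^{N-1-(m-1)p}\le R^{p}s^{N-1-mp}$ is the wrong direction for absorption, so instead I would chain the estimates: the term $\int|w^{(j-m)}|^ps^{N-1-(m-1)p}$ is itself controlled by the previous level of the induction. Unwinding the induction yields the sum over $i=k-j,\dots,k$ in \eqref{tl3}. Finally, the "in particular'' statement is the case $j=k$, combined with $\phi^{N-1}$ versus $t^{N-1}$ comparability once more.

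I expect the main obstacle to be the bookkeeping of the iterated chain. Each application of Hardy produces both a derivative term and a same-order term with a better weight, and I need the induction to close. I would organize it as the claim $\int|w^{(i)}|^ps^{N-1-(j-i)p}\le C\sum_{l=i}^{j}\int|w^{(l)}|^ps^{N-1}$, proved by downward induction on $i$ from $i=j$. The base case $i=j$ is trivial. For the inductive step, the lemma bounds the level-$i$ quantity by the level-$(i+1)$ quantity plus $\int|w^{(i)}|^ps^{N-1-(j-i-1)p}$. That second term is handled by the same claim with $j$ replaced by $j-1$, which is why I would induct on $j$ as an outer induction.
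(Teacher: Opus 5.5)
Your argument is correct up to one slip, noted below. Its first step is the paper's: under \eqref{phi4} one has $C_1 s\le\phi(s)\le C_2 s$ on $(0,R)$, so \eqref{tl3} reduces to the same inequality with weight $s^{N-1}$ (this is \eqref{equivWw}).

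The paper stops at that point and treats the Euclidean inequality as an immediate consequence of \cite[Section 2]{MR2838041}. You instead prove it. You apply the dual Hardy inequality $\int_0^\infty s^{\alpha}\bigl|\int_s^\infty g\bigr|^p\,\mathrm ds\le (p/(\alpha+1))^p\int_0^\infty s^{\alpha+p}|g|^p\,\mathrm ds$, for $\alpha>-1$, to $g=(\eta f)'$ with a cutoff $\eta$ vanishing near $R$, and iterate over the weight deficit. The citation buys brevity. Your route is self-contained and shows exactly where $N>jp$ enters: the step at deficit $m$ needs $N-1-mp>-1$, and the worst case $m=j$ is precisely $N>jp$. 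It also shows why no boundary condition at $R$ is needed.

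The slip is in your displayed key lemma: its left-hand side must be $\int_0^R|f|^ps^{\alpha}\,\mathrm ds$, not $\int_0^R|f|^ps^{\alpha-p}\,\mathrm ds$. As displayed, it gains $2p$ in the weight for a single derivative, which is false; take $f\equiv1$ with $N\le(m+1)p$. Your proof of the lemma and your iteration step both actually use the correct form, so only the display needs fixing.

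Separately, the outer induction on $j$ is unnecessary. The cutoff term is $\|\eta'\|_\infty^p\int_{R/2}^R|f|^ps^{\alpha+p}\,\mathrm ds$, and on $(R/2,R)$ all powers of $s$ are comparable. So with $f=w^{(i)}$ this term is bounded directly by $C\int_0^R|w^{(i)}|^ps^{N-1}\,\mathrm ds$, and the single chain over $m=1,\dots,j$ closes on its own.
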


With all the results above inherited from the Euclidean space study, we are able to prove our embeddings results concerning $R\in(0,\infty)$. More precisely, now we prove Theorem \ref{theo11} and Corollary \ref{corcompact}.

\begin{proof}[Proof of Theorem \ref{theo11}]
\textit{(1)} This is a direct consequence of Theorem \ref{theo10}, item (1), combined with Proposition \ref{prop31}.

\vspace{0.2cm}

\noindent \textit{(2)} Equation \eqref{rls} follows directly from the radial lemma \eqref{rlsv} and Theorem \ref{theo10}. Using \eqref{rls} with $\theta\geq0$, we have
\begin{equation}\label{ccsa}
|u|^{\frac{\theta p}{N-kp}}\phi^\theta(r)\leq C\|u\|^{\frac{\theta p}{N-kp}}_{W^{k,p}(M)},\quad\forall x=(r,\theta)\in(0,R)\times \mathbb S^{N-1}.
\end{equation}

By $\lim_{r\to R}\phi^{(j)}(r)\in(0,\infty)$ for every $j\geq0$, we can extend the metric $g$ to the boundary of $B_R(0)$, making $(\overline{B_R(0)},g)$ a compact Riemannian manifold with boundary. Therefore, we can apply the Sobolev embedding theorem for compact Riemannian manifold with boundary (see \cite[Theorem 2.30]{MR1636569}) to obtain the continuous embedding $W^{k,p}(M)\hookrightarrow L^{\frac{Np}{N-kp}}(M)$. Combining this embedding with \eqref{ccsa}, we conclude that
\begin{equation*}
\int_{M}|u|^{\frac{p(\theta+N)}{N-kp}}\phi^\theta(r)\mathrm dV_g\leq C\|u\|^{\frac{\theta p}{N-kp}}_{W^{k,p}(M)}\|u\|^{\frac{Np}{N-kp}}_{L^{\frac{Np}{N-kp}}(M)}\leq C\|u\|_{W^{k,p}(M)}^{\frac{p(\theta+N)}{N-kp}},
\end{equation*}
which implies the continuous embedding $W^{k,p}_{\mathrm{rad}}(M)\hookrightarrow L^{\frac{p(\theta+N)}{N-kp}}_{\phi^\theta}(M)$.

\vspace{0.2cm}

\noindent\textit{(3)} Equation \eqref{rlsc} follows from the radial lemma \eqref{LR-A} and Theorem \ref{theo10}. We claim that
\begin{equation}\label{claimsa}
\left(\log\frac{R}{t}\right)^{\frac{p-1}p}+1\leq C_\varepsilon t^{-\varepsilon},\quad\forall t\in(0,R],\varepsilon>0,
\end{equation}
for some $C_\varepsilon>0$ independent of $t$. To prove \eqref{claimsa}, observe that
\begin{equation*}
\mbox{\eqref{claimsa} holds}\Leftrightarrow\lim_{t\to0}\dfrac{\left(\log\frac{R}{t}\right)^{\frac{p-1}p}+1}{t^{-\varepsilon}}=0\quad\forall\varepsilon>0\Leftrightarrow\lim_{t\to0}\dfrac{\log R-\log t}{t^{-\varepsilon}}=0\quad\forall\varepsilon>0.
\end{equation*}
Applying L'H\^opital's rule, we conclude that \eqref{claimsa} holds.

Using the radial lemma \eqref{LR-A} and \eqref{claimsa}, we obtain
\begin{equation*}
\int_M|u|^{q}\phi^\theta(r)\mathrm dV_g\leq\omega_{N-1}C_\varepsilon^q\int_0^Rt^{-\varepsilon q}\phi^\theta(t)\mathrm dt\leq C\int_0^Rt^{\frac{\theta-1}{2}}\mathrm dt<\infty,
\end{equation*}
where we choose $\varepsilon=(\theta+1)/2q$. This concludes the proof of \textit{(3)}.

\vspace{0.2cm}

\noindent\textit{(4)} This follows directly from Proposition \ref{asfjanfkslns} and Theorem \ref{theo10}.
\end{proof}

\begin{proof}[Proof of Corollary \ref{corcompact}]
Let $(u_n)$ be a bounded sequence in $W^{k,p}_{\mathrm{rad}}(M)$. Using the same argument in the proof of item \textit{(2)} of Theorem \ref{theo11}, we can consider $M$ as $(\overline{B_R(0)},g)$, a compact Riemannian manifolds with boundary. Since the compact embedding $W^{k,p}(M)\hookrightarrow L^1(M)$ holds for compact Riemannian manifold with boundary (see \cite[Theorem 2.34]{MR1636569}), it follows that, up to a subsequence, $(u_n)$ is Cauchy in $L^1(M)$. 

By interpolation inequality and the fact that $1\leq q<p^*_\theta=\frac{(\theta+N)p}{N-kp}$, there exists $\alpha\in(0,1]$ such that
\begin{align*}
\|u_n-u_m\|_{L^q_{\phi^\theta}(M)}&\leq\left\|\phi^{\frac{\theta}q}(r)u_n-\phi^{\frac{\theta}{q}}(r)u_m\right\|_{L^1(M)}^\alpha\left\|\phi^{\frac{\theta}q}(r)u_n-\phi^{\frac{\theta}{q}}(r)u_m\right\|_{L^{p^*_\theta}(M)}^{1-\alpha}.
\end{align*}
Diving each term by $\|\phi\|_{L^\infty}^{\frac{\theta}{q}}$ and using that $\left(\frac{\phi(r)}{\|\phi\|_{L^\infty}}\right)^{\frac{\theta}{q}}\leq\left(\frac{\phi(r)}{\|\phi\|_{L^\infty}}\right)^{\frac{\theta}{p^*_\theta}}\leq1$ for all $x\in M$, we obtain
\begin{align*}
\dfrac{\left\|u_n-u_m\right\|_{L^q_{\phi^\theta}(M)}}{\|\phi\|_{L^\infty}^{\frac{\theta}{q}}}&\leq\|u_n-u_m\|^\alpha_{L^1(M)}\dfrac{\Big\|\phi^{\frac{\theta}{p^*_\theta}}(r)u_n-\phi^{\frac{\theta}{p^*_\theta}}(r)u_m\Big\|^{1-\alpha}_{L^{p^*_\theta}(M)}}{\|\phi\|_{L^\infty}^{\frac{\theta}{p^*_\theta}}}\\
&\leq C\|u_n-u_m\|^\alpha_{L^1(M)}\|u_n-u_m\|^{1-\alpha}_{W^{k,p}(M)},
\end{align*}
where we used the continuous embedding $W^{k,p}_{\mathrm{rad}}(M)\hookrightarrow L^{p^*_\theta}_{\phi^\theta}(M)$ given in item \textit{(2)} of Theorem \ref{theo11}. Since $\alpha\in(0,1]$ and $(u_n)$ is Cauchy in $L^1(M)$, it follows that $(u_n)$ is also Cauchy in $L^q_{\phi^\theta}(M)$, and therefore, it converges in $L^q_{\phi^\theta}(M)$. This concludes the proof of the corollary.
\end{proof}

An analogous results holds for the weighted Sobolev space $W^{k,p}((0,R),\phi^{N-1})$. Since the proof follows the same lines as that of Theorem~\ref{theo11} and Corollary \ref{corcompact}, with only minor modifications, so we omit the details. Alternatively, by setting $\alpha_0=\cdots=\alpha_k=N-1$ in \cite[Theorem 1.1]{MR4790800} and considering the equivalence \eqref{equivWw}, we derive the following theorem.

\begin{theo}\label{theoweighted}
Let $R\in(0,\infty)$, $N\geq kp$, $\theta\geq N-kp-1$, $p\geq1$ real numbers, and $k\geq1$ integer. Assume that the limit $\lim_{r\to R}\phi(r)\in(0,\infty)$ exists.
\begin{flushleft}
$\mathrm{(1)}$ Every function $v\in W^{k,p}((0,R),\phi^{N-1})$ is almost everywhere equal to a function $V$ in $C^{k-1}((0,R])$. Additionally, all derivatives of $V$ of order $k$ (in the classical sense) exist almost everywhere for $t\in(0,R)$.\\
$\mathrm{(2)}$ If $N>kp$, then $W^{k,p}((0,R),\phi^{N-1})$ is continuously embedded in $L^q_{\phi^\theta}(0,R)$ for every $1\leq q\leq\frac{(\theta+1)p}{N-kp}$.\\
$\mathrm{(3)}$ If $N=kp$ and $p>1$, then $W^{k,p}((0,R),\phi^{N-1})$ is compactly embedded in $L^q_{\phi^\theta}(0,R)$ for all $1\leq q<\infty$.\\
$\mathrm{(4)}$ If $N=kp$ and $p=1$, then $W^{N,1}((0,R),\phi^{N-1})$ is continuously embedded in $C([0,R])$.
\end{flushleft}
\end{theo}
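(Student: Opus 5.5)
The plan is to reduce everything to the Euclidean one-dimensional weighted space and reuse the one-dimensional machinery already collected in Propositions \ref{prop31}--\ref{prophardy}. Since $\lim_{r\to R}\phi(r)\in(0,\infty)$ exists, condition \eqref{phi4} holds. Hence $C_1 t\le\phi(t)\le C_2 t$ on $(0,R)$, and the equivalence \eqref{equivWw} $W^{k,p}((0,R),\phi^{N-1})\equiv W^{k,p}((0,R),t^{N-1})$ is available. By the same comparison, $L^q_{\phi^\theta}(0,R)=L^q_{t^\theta}(0,R)$ with equivalent norms. It therefore suffices to prove every item for $\phi(t)=t$.

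\textbf{Item (1).} This follows directly from Proposition \ref{prop31}, which gives $V\in AC^{k-1}_{loc}((0,R])\subset C^{k-1}((0,R])$ with $V^{(k)}$ existing a.e.

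\textbf{Item (4).} This is exactly Proposition \ref{asfjanfkslns}.

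\textbf{Item (2).} I will use the radial lemma \eqref{rlsv}, $|v(t)|\le C t^{-(N-kp)/p}\|v\|$, written as $|v(t)|^{s}t^{N-kp}\le C\|v\|^{s}$ with $s=p$. Splitting $|v|^q t^\theta$ as $|v|^{q}t^{\theta}$ and inserting the pointwise bound gives
\[
|v|^qt^\theta\le C\|v\|^q t^{\theta-q(N-kp)/p},
\]
which is integrable near $0$ precisely when $\theta+1>q(N-kp)/p$. This covers all $q<\frac{(\theta+1)p}{N-kp}$.

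The endpoint $q^*=\frac{(\theta+1)p}{N-kp}$ needs more care, and I expect it to be the main obstacle. A pure $L^\infty$-type bound loses a logarithm there. I would instead interpolate with an integral quantity. One option is to write
\[
|v|^{q^*}t^\theta=|v|^{q^*-p}t^{\theta-N+1}\cdot|v|^pt^{N-1}
\]
and bound $|v|^{q^*-p}\le C\|v\|^{q^*-p}t^{-(q^*-p)(N-kp)/p}$. The exponent then satisfies
\[
\theta-N+1-(q^*-p)(N-kp)/p=\theta-N+1-(\theta+1)+N-kp=-kp.
\]
This leaves
\[
\int_0^R|v|^p t^{N-1-kp}\,dt=\big\|v/t^k\big\|^p_{L^p_{t^{N-1}}},
\]
which is controlled by the Hardy inequality of Proposition \ref{prophardy} since $N>kp$. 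The hypothesis $\theta\ge N-kp-1$ is exactly what guarantees $q^*\ge p$, so the splitting is legitimate. For $1\le q<p$ I would use Hölder on the bounded interval together with the already established bound at exponent $p$ or $q^*$, keeping the weight integrable (here $\theta\ge0$ or at least $\theta>-1$ enters). Finally, since $R<\infty$, monotonicity in $q$ on a finite measure space fills in intermediate exponents.

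\textbf{Item (3).} I will follow the proof of Theorem \ref{theo11}(3). Proposition \ref{propworst} together with \eqref{claimsa} gives $|v(t)|\le C_\varepsilon t^{-\varepsilon}\|v\|$, so $\int_0^R|v|^qt^\theta\,dt\le C\|v\|^q$ once $\varepsilon q<\theta+1$; this is continuity.

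For compactness, take a bounded sequence $(v_n)$. On each $[\delta,R]$ the functions are bounded in $W^{1,p}(\delta,R)$ (unweighted there), hence precompact in $C([\delta,R])$ by Arzelà--Ascoli. A diagonal argument yields a subsequence converging uniformly on compacts of $(0,R]$. The tail near $0$ is uniformly small, since
\[
\int_0^\delta |v_n|^qt^\theta\,dt\le C\delta^{\theta+1-\varepsilon q}
\]
uniformly in $n$. Combining the two gives Cauchy convergence in $L^q_{t^\theta}$, and hence the compact embedding.
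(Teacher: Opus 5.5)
Your argument is correct, and it takes a different, more self-contained route than the paper. The paper proves this theorem only by reduction. After the equivalence \eqref{equivWw}, it either invokes \cite[Theorem 1.1]{MR4790800} with $\alpha_0=\cdots=\alpha_k=N-1$, or says one can repeat the proofs of Theorem \ref{theo11} and Corollary \ref{corcompact}.

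For items (1), (4) and the continuity part of (3) you do exactly what the proof of Theorem \ref{theo11} does. Elsewhere that proof relies on Riemannian input: the Sobolev embedding $W^{k,p}(M)\hookrightarrow L^{Np/(N-kp)}(M)$ and the compact embedding into $L^1(M)$ on a compact manifold with boundary. These have no direct counterpart for a one-dimensional weight $t^\theta$ unrelated to the volume density $t^{N-1}$.

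You replace them with one-dimensional tools already listed in Section \ref{sec5}:
\begin{itemize}
\item At the endpoint $q^*=\frac{(\theta+1)p}{N-kp}$ you interpolate the radial lemma \eqref{rlsv} against the Hardy inequality of Proposition \ref{prophardy} with $j=k$. Your exponent computation, which leaves exactly the weight $t^{N-1-kp}$, is right.
\item For compactness you use Arzel\`a--Ascoli on $[\delta,R]$ together with the uniform tail bound from Proposition \ref{propworst} and \eqref{claimsa}.
\end{itemize}
This gives a proof with no external citation. It also shows why the hypothesis $\theta\ge N-kp-1$ appears: it is exactly the condition $q^*\ge p$, which lets you raise the pointwise bound to the nonnegative power $q^*-p$. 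Your separate H\"older step for $1\le q<p$ is unnecessary, since the pointwise bound already handles every $q<q^*$ directly.

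One caveat concerns item (3). Both your choice of $\varepsilon$ with $\varepsilon q<\theta+1$ and the tail bound $C\delta^{\theta+1-\varepsilon q}$ need $\theta>-1$. But when $N=kp$, the hypothesis $\theta\ge N-kp-1$ only gives $\theta\ge-1$. At $\theta=-1$ the assertion is actually false: $v\equiv1$ belongs to $W^{k,p}((0,R),\phi^{N-1})$, yet $\int_0^R\phi^{-1}\,dt=\infty$ because $\phi(t)\le C_2t$. So you should state explicitly that item (3) is proved, and can only hold, for $\theta>-1$. This is a defect of the statement, not of your argument. Item (2) has no such issue, since $N>kp$ forces $\theta\ge N-kp-1>-1$.
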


\begin{cor}
Assume the hypotheses of Theorem \ref{theoweighted} and suppose that $N>kp$ and $\theta\geq N-kp-1$. If $1\leq q<\frac{(\theta+1)p}{N-kp}$, then $W^{k,p}((0,R),\phi^{N-1})\hookrightarrow L^q_{\phi^\theta}(0,R)$ is compact.
\end{cor}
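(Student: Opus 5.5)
The plan is to follow the proof of Corollary~\ref{corcompact} almost verbatim, but on the interval $(0,R)$ instead of on $M$. The ingredients are a compactness result in some weak norm, the critical continuous embedding of Theorem~\ref{theoweighted}(2), and an interpolation inequality. Throughout, the equivalence \eqref{equivWw} lets us replace $\phi$ by $t$ up to constants, since $C_1 t\le\phi(t)\le C_2 t$ on $(0,R)$. Note that the hypotheses of Theorem~\ref{theoweighted}, together with $\phi(0)=0$, smoothness, and $\lim_{r\to R}\phi(r)\in(0,\infty)$, give \eqref{phi4}, so all results of this section are available.

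\textbf{Step 1 (local compactness).} Let $(v_n)$ be bounded in $W^{k,p}((0,R),\phi^{N-1})$. On each interval $(\delta,R)$ the weight $\phi^{N-1}$ is bounded above and below, so $(v_n)$ is bounded in the unweighted space $W^{1,p}(\delta,R)$. By Rellich (or Arzel\`a--Ascoli, using Proposition~\ref{prop31}) it is precompact in $C([\delta,R])$. A diagonal argument over $\delta=1/m$ then produces a subsequence converging uniformly on every $[\delta,R]$.

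\textbf{Step 2 (uniform smallness near $0$).} Write $q^*:=\frac{(\theta+1)p}{N-kp}$, so that $q<q^*$. I would first try a direct tail estimate using the radial lemma, Proposition~\ref{proprls}:
\begin{equation*}
\int_0^\delta|v_n|^q\phi^\theta\,\mathrm dt\le C\sup_n\|v_n\|^q_{W^{k,p}_{\phi^{N-1}}}\int_0^\delta t^{\theta-q\frac{N-kp}{p}}\,\mathrm dt .
\end{equation*}
The exponent satisfies $\theta-q\frac{N-kp}{p}>\theta-(\theta+1)=-1$ exactly because $q<q^*$. Hence the right-hand side tends to $0$ as $\delta\to0$, uniformly in $n$. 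This avoids interpolation altogether. Combined with the uniform convergence on $[\delta,R]$ from Step 1, it shows that $(v_n)$ is Cauchy in $L^q_{\phi^\theta}(0,R)$, which proves compactness.

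\textbf{Alternative route.} As a backup mirroring Corollary~\ref{corcompact}, I would interpolate between $L^1_{\phi^\theta}$ and $L^{q^*}_{\phi^\theta}$. The continuity at the endpoint comes from Theorem~\ref{theoweighted}(2), and the Cauchy property in $L^1_{\phi^\theta}$ comes from the argument above with $q=1$. The case $1\le q<q^*$ requires $q^*\ge1$, which holds because $\theta\ge N-kp-1$.

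\textbf{Main obstacle.} The only genuine issue is the behaviour at $t=0$, and this is exactly where the strict inequality $q<q^*$ is used, through the integrability of $t^{\theta-q(N-kp)/p}$. The right endpoint $t=R$ causes no trouble because $\phi$ has a positive finite limit there.
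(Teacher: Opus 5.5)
Your argument is correct in outline, but one step fails as stated. Step 1 is false when $k=p=1$. This case is admissible: then $N\ge 2>kp$, and the range $1\le q<q^*$ is nonempty for $\theta>N-2$. There, a bounded set in $W^{1,1}(\delta,R)$ need not be precompact in $C([\delta,R])$. Take the ramps $v_n(t)=\min\{1,\max\{0,n(t-t_0)\}\}$ with $t_0\in(\delta,R)$. They are bounded in $W^{1,1}((0,R),\phi^{N-1})$ but converge pointwise to a step function, so no subsequence converges uniformly. The other cases are fine. For $p>1$ you have uniform H\"older equicontinuity. For $p=1$, $k\ge 2$ the $v_n$ are equi-Lipschitz on $[\delta,R]$, because $v_n'$ is bounded in $W^{1,1}(\delta,R)\subset L^\infty(\delta,R)$, so Arzel\`a--Ascoli applies. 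The repair is easy because you never need uniform convergence. Since $\phi^\theta$ is bounded on $[\delta,R]$, it suffices that $(v_n)$ be precompact in $L^q(\delta,R)$. This follows from the one-dimensional Rellich theorem ($W^{1,1}(\delta,R)$ embeds compactly in $L^q(\delta,R)$ for every $q<\infty$). Alternatively, use Helly's selection theorem with the uniform bound of Proposition~\ref{proprls} and dominated convergence. With this change, the diagonal argument and Step 2 go through verbatim.

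With that fix, your proof is valid and takes a different route from the paper. The paper gives no separate argument; it says the result follows the lines of Theorem~\ref{theo11} and Corollary~\ref{corcompact}, or from the cited one-dimensional weighted embedding theorem via \eqref{equivWw}. That template combines compactness in a low Lebesgue space with H\"older interpolation against the endpoint embedding of Theorem~\ref{theoweighted}(2). There, $q<q^*$ enters only through the interpolation exponent $\alpha>0$. You instead localize: classical compactness on $[\delta,R]$, plus uniform smallness on $(0,\delta)$ from the pointwise radial lemma. In your argument, $q<q^*$ enters as integrability of $t^{\theta-q(N-kp)/p}$ at $0$. Your route is self-contained. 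It never calls on the endpoint embedding, so it does not even use $\theta\ge N-kp-1$, and it works whenever $[1,q^*)$ is nonempty. It also makes explicit the low-exponent compactness that the template takes for granted. On $M$ that came from Rellich's theorem on a compact manifold with boundary. That has no verbatim counterpart on $(0,R)$, because the weight $\phi^{N-1}$ degenerates at $0$. For the same reason, your ``alternative route'' is not independent of your main one: its $L^1_{\phi^\theta}$ Cauchy property is your main argument with $q=1$. The paper's template, in turn, is shorter once those two ingredients are available, and it needs no pointwise decay estimate.
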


\section{Decay Lemma and its consequences}\label{sec6}

Now we focus on the case $R=\infty$. Since the domain of the exponential map $\exp_o$ is $B_R(0)=T_oM$, we have, by Hopf-Rinow's Theorem, that $(M,g)$ is complete. Using \cite[Theorem 3.1]{MR1688256}, we obtain that $C_0^\infty(M)$ is dense in $W^{1,p}(M)$. The following lemma concerns this property for the radial case.

\begin{lemma}\label{lemmadensity}
Suppose $R=\infty$. Then $C^\infty_{0,\mathrm{rad}}(M)$ the space of radial smooth functions with compact support is dense in $W^{1,p}_{\mathrm{rad}}(M)$ for all $1\leq p<\infty$.
\end{lemma}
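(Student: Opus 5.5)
Since $R=\infty$, $M$ is complete, and by \cite[Theorem 3.1]{MR1688256} (quoted just before the lemma) $C_0^\infty(M)$ is dense in $W^{1,p}(M)$. The plan is not to symmetrize a general approximating sequence, which would need an averaging argument over $\mathrm{SO}(N)$. Instead, I will use Proposition \ref{propa}(b), which identifies $W^{1,p}_{\mathrm{rad}}(M)$ isometrically (up to the factor $\omega_{N-1}$) with $W^{1,p}((0,\infty),\phi^{N-1})$. The key facts are that $|\nabla u|_g=|v'(r)|$ and $\mathrm dV_g=\omega_{N-1}\phi^{N-1}\mathrm dr$. So it suffices to approximate the profile $v$ in $W^{1,p}((0,\infty),\phi^{N-1})$ by profiles $v_n$ for which $u_n(x)=v_n(r)$ is smooth on $M$ with compact support.

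The construction has two steps. First, truncate at infinity. Take a cutoff $\eta\in C^\infty([0,\infty))$ with $\eta=1$ on $[0,1]$, $\eta=0$ on $[2,\infty)$, and set $\eta_n(r)=\eta(r/n)$. Then $\eta_n u\to u$ in $W^{1,p}(M)$ by dominated convergence, because
\[
|\nabla(\eta_n u)-\nabla u|_g\le |1-\eta_n||\nabla u|_g+\|\eta'\|_\infty n^{-1}|u| .
\]
The function $\eta_n u$ is radial with compact support in $\overline{B_{2n}}$. Second, regularize $w=\eta_n v$ near the origin. To keep smoothness at $o$, I will make the approximant constant near $0$: a function of the form $\tilde v(r)$ with $\tilde v$ smooth and constant on $[0,\delta)$ gives a smooth function on $M$. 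This is because $r=d(x)$ is smooth away from $o$, and near $o$ the function is simply constant. Away from the origin, on $[\delta,\infty)$, standard mollification of $w$ in $r$ (with the weight $\phi^{N-1}$ bounded above and below on compact subintervals of $(0,\infty)$) gives smooth approximants in the weighted norm on $[\delta,2n+1]$.

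The main obstacle is the behaviour near $r=0$, where the weight $\phi^{N-1}\sim r^{N-1}$ degenerates and $v$ may blow up, e.g.\ like $\log$ for $p=N$. I plan the following: replace $w$ on $[0,\delta]$ by $\tilde w=\zeta_\delta w + (1-\zeta_\delta)c$? That does not work directly, since $w$ is unbounded. So I will instead first approximate by composing with truncation. Let $T_m(s)=\max(-m,\min(m,s))$. Then $T_m\circ u\to u$ in $W^{1,p}$, since $\nabla (T_m u)=\chi_{\{|u|<m\}}\nabla u$, and it remains radial. Now $w$ is bounded, and I will use the cutoff $\zeta_\delta(r)=\zeta(r/\delta)$ vanishing near $0$. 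The error term is
\[
\int_0^{2\delta}|w|^p|\zeta_\delta'|^p\phi^{N-1}\le C m^p\delta^{-p}\delta^{N},
\]
which tends to $0$ only if $N>p$. For $N\le p$ (note $N\ge2$, so this case needs $p\ge2$), I will use the capacity-type logarithmic cutoff $\zeta(r)=\log(r/\delta^2)/\log(1/\delta)$ on $[\delta^2,\delta]$, which makes the error small when $p=N=2$. For $p>N$, functions in $W^{1,p}$ are continuous near $o$ by Morrey-type behaviour of the radial lemma, so I will instead set $\tilde w=w(\delta)$ on $[0,\delta]$. The error
\[
\int_0^\delta|w-w(\delta)|^p\phi^{N-1}
\]
then vanishes as $\delta\to0$, and the derivative contribution is $\int_0^\delta|w'|^p\phi^{N-1}\to0$. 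In fact, this constant-extension trick $\tilde w=w(\delta)$ on $[0,\delta]$, with $w$ left unchanged after $\delta$, works for all $p$ once $w$ is bounded: the derivative part is $\int_0^\delta|w'|^p\phi^{N-1}\to0$ and the $L^p$ part is at most $(2m)^p\int_0^\delta\phi^{N-1}\to0$. So I will use it uniformly and then mollify, with the kink at $\delta$ smoothed out by mollification supported in $[\delta/2,\infty)$. A diagonal argument in $n,m,\delta$ and the mollification parameter finishes the proof.
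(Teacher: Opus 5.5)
Your argument is correct, and its outline matches the paper's. Both reduce via Proposition~\ref{propa}(b) to the profile $v\in W^{1,p}((0,\infty),\phi^{N-1})$, cut off at infinity, replace $v$ on $[0,\delta]$ by the constant $v(\delta)$, and smooth what remains.

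The real difference is how you pay for the constant replacement, i.e.\ how you control $\int_0^\delta|v-v(\delta)|^p\phi^{N-1}\,\mathrm dt$. The paper keeps $v$ unbounded. It uses the weighted Hardy (Poincar\'e-type) inequality \eqref{bs2}, whose constant stays bounded for $T\le 1$, together with the comparison \eqref{bs4}, to dominate this term by $C\int_0^\delta|v'|^p\phi^{N-1}\,\mathrm dt$. You first truncate in value, so the term is trivially at most $(2m)^p\int_0^\delta\phi^{N-1}\,\mathrm dt$.

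Your route is more elementary. It needs no Hardy inequality and no comparison of $\phi$ with $t$, only integrability of $\phi^{N-1}$ at $0$. The price is one extra approximation layer, whereas the paper gets a single quantitative estimate in terms of $v'$ alone.

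Your smoothing step also makes two points more transparent than the paper's bare assertion that the smooth approximant on $[T_0,n_0+1]$ ``can be smoothly extended''. You extend the modified profile by the constant $v(\delta)$ to $t<0$ and mollify at a scale below $\delta$. This makes the smoothness at $o$ and the gluing at $\delta$ automatic.

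When you write it up, three clean-ups are worth making:
\begin{enumerate}
\item Drop the abandoned attempts: the $\zeta_\delta$ cutoff, the logarithmic cutoff and the Morrey remark play no role in the final argument.
\item Justify the truncation on the one-dimensional profile, not on $M$, where $W^{1,p}(M)$ is defined as a completion. On the profile, $v$ is locally absolutely continuous on $(0,\infty)$ and $(T_mv)'=\chi_{\{|v|<m\}}v'$ a.e.\ is classical.
\item No diagonal argument is needed: choose $n$, $m$, $\delta$ and the mollification scale in succession, each with error $\varepsilon/4$. Transferring the convergence back to $M$ only uses $|\nabla(u_n-u)|_g=|(v_n-v)'|$ for $u_n-u$, which already lies in $W^{1,p}_{\mathrm{rad}}(M)$. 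So the converse embedding is never invoked.
\end{enumerate}
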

\begin{proof}
By item (2) of Theorem \ref{theo10}, it is enough to show that for each $v\in W^{1,p}((0,\infty),\phi^{N-1})$ and $\varepsilon>0$, there exists a function $\phi\in C^\infty([0,\infty))$ such that $\mathrm{supp}(\phi)$ is compact and $\|v-\phi\|_{W^{1,p}_{\phi^{N-1}}}\leq\varepsilon$. For each $n\in\mathbb N$, define the cut-off function $\psi_n\colon \mathbb R\to \mathbb R$ given by
\begin{equation*}
\psi_n(r)=\left\{\begin{array}{lll}
     1,&\mbox{if }r\leq n,\\
     n+1-r,&\mbox{if }n<r\leq n+1,\\
     0,&\mbox{if }r\geq n+1.
\end{array}\right.
\end{equation*}
Note that $\mathrm{supp}(\psi_n)\subset (-\infty,n+1]$ and that $v_n:=v\psi_n\in W^{1,p}((0,\infty),\phi^{N-1})$ satisfies $v_n\to v$ in $W^{1,p}((0,\infty),\phi^{N-1})$. This allows us to fix $n_0\in\mathbb N$ such that
\begin{equation}\label{bs1}
\|v-v_{n_0}\|_{W^{1,p}_{\phi^{N-1}}}\leq\frac{\varepsilon}2.
\end{equation}

Given $T>0$, by a Hardy-type inequality (see \cite[Theorem 6.2]{zbMATH00046945}), there exists a constant $C_{T}>0$ such that
\begin{equation}\label{bs2}
\int_0^{T}|w-w(T)|^pt^{N-1}\mathrm dt\leq C_{T}\int_0^{T}|w'|^pt^{N-1}\mathrm dt,\quad\forall w\in AC_{\mathrm{loc}}(0,T),
\end{equation}
where $C_{T}>0$ satisfies the estimate
\begin{equation}\label{bs3}
C_{T}\leq p^{\frac{1}{p}}\left(\frac{p}{p-1}\right)^{\frac{p-1}p}\sup_{0<r<T}\left\|t^{\frac{N-1}p}\right\|_{L^p(0,r)}\left\|t^{-\frac{N-1}p}\right\|_{L^p(r,T)}.
\end{equation}
Fix $0<T_0\leq1$ sufficiently small such that
\begin{equation}\label{bs4}
\dfrac12t^{N-1}\leq \phi^{N-1}(t)\leq2t^{N-1},\quad\forall t\in[0,T_0],
\end{equation}
and
\begin{equation}\label{bs5}
\int_0^{T_0}|v'|^p\phi^{N-1}(t)\mathrm dt\leq \dfrac{\varepsilon^p}{2^{p+1}(4C_{\mathrm{max}}+1)},
\end{equation}
where $C_{\mathrm{max}}$ is the maximum of the right-hand term of \eqref{bs3}, which is bounded for $T\in(0,1]$ since it is nondecreasing in $T$. Since $W^{1,p}((T_0,n_0+1),\phi^{N-1})\equiv W^{1,p}(T_0,n_0+1)$, we can approximate $v_{n_0}$ by a smooth function in $W^{1,p}$. Moreover, we can impose that this smooth function can be smoothly extended to its value at $T_0$ for $t\in[0,T_0)$ and to its value at $n_0+1$ for $t\in(n_0+1,\infty)$. Specifically, there exists a smooth function $\phi\colon[0,\infty)\to\mathbb R$ such that $\phi(t)=v(T_0)$ on $[0,T_0]$, $\phi(t)=0$ on $[n_0+1,\infty)$, and 
\begin{equation}\label{bs6}
\|v_{n_0}-\phi\|_{W^{1,p}(T_0,n_0+1)}\leq \frac{\varepsilon}{2^{\frac{p+1}p}}.
\end{equation}

Using \eqref{bs2}, \eqref{bs4}, and \eqref{bs5}, we obtain
\begin{align}
\int_0^{T_0}|v-v(T_0)|^p\phi^{N-1}(t)\mathrm dt&\leq 2\int_0^{T_0}|v-v(T_0)|^pt^{N-1}\mathrm dt\leq 2C_{T_0}\int_0^{T_0}|v'|^pt^{N-1}\mathrm dt\nonumber\\
&\leq 4C_{T_0}\int_0^{T_0}|v'|^p\phi^{N-1}(t)\mathrm dt\leq 4C_{\mathrm{max}}\dfrac{\varepsilon^p}{2^{p+1}(4C_{\mathrm{max}}+1)}.\label{bs7}
\end{align}
Thus, by \eqref{bs1}, \eqref{bs6}, and \eqref{bs7}, we have
\begin{align*}
\|v-\phi\|_{W^{1,p}_{\phi^{N-1}}}&\leq \|v-v_{n_0}\|_{W^{1,p}_{\phi^{N-1}}}+\|v_{n_0}-\phi\|_{W^{1,p}_{\phi^{N-1}}}\\
&\leq\dfrac{\varepsilon}2+\left(\int_0^\infty|v_{n_0}-\phi|^p\phi^{N-1}(t)\mathrm dt+\int_0^\infty|v_{n_0}'-\phi'|^p\phi^{N-1}(t)\mathrm dt\right)^{\frac{1}{p}}\\
&\leq\dfrac{\varepsilon}2+\bigg(\int_{0}^{T_0}|v-v(T_0)|^p\phi^{N-1}(t)\mathrm dt+\int_{T_0}^{n_0+1}|v_{n_0}-\phi|^p\phi^{N-1}(t)\mathrm dt\\
&\quad+\int_0^{T_0}|v'|^p\phi^{N-1}(t)\mathrm dt+\int_{T_0}^{n_0+1}|v_{n_0}'-\phi'|^p\phi^{N-1}(t)\mathrm dt\bigg)^{\frac{1}{p}}\\
&\leq\dfrac{\varepsilon}2+\left(4C_{\mathrm{max}}\dfrac{\varepsilon^p}{2^{p+1}(4C_{\mathrm{max}}+1)}+\dfrac{\varepsilon^p}{2^{p+1}}+\dfrac{\varepsilon^p}{2^{p+1}(4C_{\mathrm{max}}+1)}\right)^{\frac1p}=\varepsilon.
\end{align*}
This completes the proof of the lemma.
\end{proof}

In this section, we consider the case $R=\infty$. Our objective is to develop a decay lemma (Lemma \ref{decaylemma}) for $u\in W^{1,p}_{\mathrm{rad}}(M)$ and establish conditions for the embedding $W^{k,p}_{\mathrm{rad}}(M)\hookrightarrow L^q_{\phi^\theta}(M)$. Following the same arguments as in the Euclidean scenario, developing an asymptotic decay for functions in $W^{1,p}_{\mathrm{rad}}(M)$ is crucial for proving these embeddings, which will be demonstrated in the proof of Theorem \ref{theo12}.

At the end of this section, we will present similar results for the weighted Sobolev space $W^{k,p}((0,\infty),\phi^{N-1}(t))$, noting that their proofs follow analogous arguments to those used for the Sobolev space $W^{k,p}_{\mathrm{rad}}(M)$.

\begin{lemma}\label{decaylemma}
Assume that $C_\phi>0$, where $C_\phi$ is given by \eqref{Cphi}. For each $u\in W^{1,p}_{\mathrm{rad}}(M)$, it holds
\begin{equation*}
|u(x)|\leq \left(\dfrac{p}{C_\phi^{N-1}\omega_{N-1}}\right)^{\frac1p}\|u\|_{L^p(M)}^{\frac{p-1}p}\|\nabla u\|_{L^p(M)}^{\frac1p}\phi^{\frac{1-N}{p}}(r),\quad\mbox{a.e. }x\in M.
\end{equation*}
\end{lemma}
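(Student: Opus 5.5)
The plan is the classical Strauss argument, run on the radial profile and combined with the density result of Lemma \ref{lemmadensity}. First I prove the estimate for $u\in C^\infty_{0,\mathrm{rad}}(M)$, writing $u(x)=v(r)$ with $v$ smooth and compactly supported in $[0,\infty)$. Fix $r>0$. Since $v$ vanishes for large $t$,
\begin{equation*}
|v(r)|^p=-\int_r^\infty \frac{\mathrm d}{\mathrm dt}|v(t)|^p\,\mathrm dt\leq p\int_r^\infty|v(t)|^{p-1}|v'(t)|\,\mathrm dt.
\end{equation*}
Next I insert the weight. For $t\geq r$ the definition \eqref{Cphi} gives $\phi(t)\geq C_\phi\phi(r)$, hence $1\leq \phi^{N-1}(t)/(C_\phi^{N-1}\phi^{N-1}(r))$. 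This yields
\begin{equation*}
|v(r)|^p\leq \frac{p}{C_\phi^{N-1}\phi^{N-1}(r)}\int_0^\infty|v|^{p-1}|v'|\phi^{N-1}(t)\,\mathrm dt.
\end{equation*}

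Then I apply H\"older's inequality with exponents $p/(p-1)$ and $p$ in the measure $\phi^{N-1}\mathrm dt$. The integral is bounded by $\|v\|_{L^p_{\phi^{N-1}}}^{p-1}\|v'\|_{L^p_{\phi^{N-1}}}$; when $p=1$ this step is trivial. By \eqref{layercake} and item (2) of Theorem \ref{theo10}, specifically the equality $|\nabla u|_g=|v'|$ from Proposition \ref{propa}(b), we have $\omega_{N-1}\|v\|^p_{L^p_{\phi^{N-1}}}=\|u\|^p_{L^p(M)}$ and $\omega_{N-1}\|v'\|^p_{L^p_{\phi^{N-1}}}=\|\nabla u\|^p_{L^p(M)}$. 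Taking $p$-th roots produces exactly the constant $\left(p/(C_\phi^{N-1}\omega_{N-1})\right)^{1/p}$ and the factor $\phi^{(1-N)/p}(r)$, since the powers of $\omega_{N-1}$ combine to $\omega_{N-1}^{-(p-1)/p-1/p}=\omega_{N-1}^{-1}$.

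Finally I pass to a general $u\in W^{1,p}_{\mathrm{rad}}(M)$. Lemma \ref{lemmadensity} gives $u_n\in C^\infty_{0,\mathrm{rad}}(M)$ with $u_n\to u$ in $W^{1,p}(M)$. Along a subsequence $u_n\to u$ a.e., and the right-hand sides converge because the norms converge. Passing to the limit in the pointwise inequality gives the result a.e. The main obstacle is this approximation step, in particular ensuring the a.e. limit is compatible with the representative. If needed, I would instead argue directly on the locally absolutely continuous representative $V$ of $v$ from Proposition \ref{prop31}, applied on bounded intervals $(0,T)$. I would use $\liminf_{t\to\infty}|v(t)|^p\phi^{N-1}(t)=0$, which holds by integrability, to justify the boundary term at infinity. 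That route requires care since $\phi$ need not be bounded below at infinity, so the density route is the first choice.
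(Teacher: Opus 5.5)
Your proposal is correct and follows the paper's proof essentially step for step. You reduce to $C^\infty_{0,\mathrm{rad}}(M)$ via Lemma~\ref{lemmadensity}, write $|v(r)|^p=-\int_r^\infty(|v|^p)'\,\mathrm dt$, insert the factor $\bigl(\phi(t)/(C_\phi\phi(r))\bigr)^{N-1}\geq1$, apply H\"older in $L^p_{\phi^{N-1}}$ (treating $p=1$ trivially), and convert back to norms on $M$ via \eqref{layercake} and $|\nabla u|_g=|v'|$. Your a.e.-subsequence argument for passing to general $u\in W^{1,p}_{\mathrm{rad}}(M)$ just spells out what the paper leaves implicit, and the alternative route through the absolutely continuous representative is not needed.
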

\begin{proof}
By Lemma \ref{lemmadensity}, it is enough to prove the lemma for functions in $C^{\infty}_{0,\mathrm{rad}}(M)$. For now, assume $p>1$. Using that $u(x)=v(r)$ with $r=d(x)$, we have
\begin{equation*}
|u(x)|^p=|v(r)|^p=-\int_r^\infty(|v(t)|^p)'\mathrm dt=-p\int_r^\infty|v(t)|^{p-2}v(t)v'(t)\mathrm dt.
\end{equation*}
From the assumption, we obtain
\begin{align*}
|u(x)|^p&\leq p\int_r^\infty|v(t)|^{p-1}|v'(t)|\left(\frac{\phi(t)}{C_\phi\phi(r)}\right)^{N-1}\mathrm dt\\
&\leq \dfrac{p}{C_\phi^{N-1}}\|v\|_{L^p_{\phi^{N-1}}(0,\infty)}^{p-1}\|v'\|_{L^p_{\phi^{N-1}}(0,\infty)}\phi^{1-N}(t)\\
&=\frac{p}{C_\phi^{N-1}\omega_{N-1}}\|u\|_{L^p(M)}^{p-1}\|\nabla u\|_{L^p(M)}\phi^{1-N}(r).
\end{align*}
This concludes the case $p>1$.

The case $p=1$ immediately follows from
\begin{align*}
|u(x)|&=|v(r)|=\left|\int_r^\infty v'(t)\mathrm dt\right|\leq\int_r^\infty |v'(t)|\left(\frac{\phi(t)}{C_\phi\phi(r)}\right)^{N-1}\mathrm dt\\
&\leq \frac{1}{C_\phi^{N-1}\omega_{N-1}}\|\nabla u\|_{L^1(M)}\phi^{1-N}(r).
\end{align*}
Therefore, we conclude the lemma.
\end{proof}

Before proving Theorem \ref{theo12}, we need to establish the following proposition, which is the case $k=1$ of the theorem.

\begin{prop}\label{proppf12}
Let $\theta\geq0$ and $p\geq1$ real numbers. Assume that $C_\phi>0$.
\begin{flushleft}
    $\mathrm{(a)}$ If $N>p$, then the following continuous embedding holds:
    \begin{equation*}
    W^{1,p}_{\mathrm{rad}}(M)\hookrightarrow L^q_{\phi^\theta}(M)\quad\text{if}\quad p\leq q\leq p^*_\theta.
    \end{equation*}
    Moreover, the embedding is compact if $\lim_{r\to\infty}\phi(r)=\infty$, $q<p_\theta^*$, and $p\theta<q(N-1)$.
    $\mathrm{(b)}$ If $N=p$, then the following continuous embedding holds:
    \begin{equation*}
    W^{1,p}_{\mathrm{rad}}(M)\hookrightarrow L^q_{\phi^\theta}(M)\quad\text{if}\quad p\leq q<\infty.
    \end{equation*}
    Moreover, the embedding is compact if $\lim_{r\to\infty}\phi(r)=\infty$ and $p\theta<q(N-1)$.
\end{flushleft}
\end{prop}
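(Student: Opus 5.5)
The plan is to split $M$ into the unit ball $B_1(o)$ and its exterior, and to treat each piece with a different tool. On the bounded piece we reuse the finite-radius theory of Section \ref{sec5}. On the exterior we use the decay Lemma \ref{decaylemma}.

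\textbf{Near the origin.} The restriction of $u\in W^{1,p}_{\mathrm{rad}}(M)$ to $B_1(o)$ is a radial function on the spherically symmetric manifold with $R=1$. On $[0,1]$ the warping function $\phi$ is smooth up to $r=1$ and $\phi(1)>0$. So Theorem \ref{theo11} (with $k=1$) applies and gives
\begin{equation*}
\int_{B_1(o)}|u|^q\phi^\theta\,\mathrm dV_g\le C\|u\|^q_{W^{1,p}(B_1(o))}\le C\|u\|^q_{W^{1,p}(M)}.
\end{equation*}
This holds for $1\le q\le p^*_\theta$ when $N>p$, and for all finite $q$ when $N=p$. In the latter case $p>1$, because $N\geq2$.

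For compactness on this piece:
\begin{itemize}
\item If $N=p$, item (3) of Theorem \ref{theo11} already gives compactness.
\item If $N>p$ and $q<p^*_\theta$, Corollary \ref{corcompact} gives compactness on $B_1(o)$.
\end{itemize}

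\textbf{At infinity, continuity.} For $r\ge1$, Lemma \ref{decaylemma} gives $|u|\le C\|u\|_{W^{1,p}}\phi(r)^{-(N-1)/p}$. Hence, for $q\ge p$,
\begin{equation*}
|u|^q\phi^\theta\le C\|u\|^{q-p}_{W^{1,p}}\,|u|^p\,\phi(r)^{\theta-(q-p)(N-1)/p}.
\end{equation*}
For $\phi(r)\geq C_\phi\phi(1)$ this gives the required bound whenever the exponent $\theta-(q-p)(N-1)/p$ is nonpositive, since then $\phi^{\text{exponent}}$ is bounded by a constant depending only on $C_\phi\phi(1)$.

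The difficulty is the case of large $\theta$, where this exponent is positive; this is the step I expect to be the main obstacle. There I would write $|u|^q\phi^\theta=|u|^{s}\phi^{\theta}\cdot|u|^{q-s}$ and try to use
\begin{equation*}
|u|^{p(\theta+N)/(N-p)}\phi^\theta\le C\|u\|^{\theta p/(N-p)}|u|^{Np/(N-p)}.
\end{equation*}
This would follow from a pointwise bound $|u|\,\phi^{(N-p)/p}\le C\|u\|$ on the exterior. Such a bound holds only if $\phi^{(N-p)/p}\lesssim\phi^{(N-1)/p}$, that is, if $\phi$ is bounded below away from $0$ for $r\ge1$, which $C_\phi>0$ guarantees.

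I would then combine this with the unweighted Sobolev embedding $W^{1,p}(M)\hookrightarrow L^{Np/(N-p)}(M)$ on the exterior. That embedding would be obtained from the one-dimensional weighted Hardy/Sobolev inequality for $v$ on $(1,\infty)$ with weight $\phi^{N-1}$. This reproduces the structure of the proof of item (2) of Theorem \ref{theo11}. Intermediate values of $q$ follow by interpolating between $q=p$ and $q=p^*_\theta$.

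For $N=p$ I would use the decay bound for any exponent and reduce every $q\ge p$ to the $L^p$ norm together with a power of $\phi$. This is where the condition $p\theta<q(N-1)$ will enter.

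\textbf{At infinity, compactness.} Assume $\phi\to\infty$ and $p\theta<q(N-1)$. Take a bounded sequence $u_n$ in $W^{1,p}_{\mathrm{rad}}(M)$. The decay lemma gives
\begin{equation*}
\int_{r>T}|u_n|^q\phi^\theta\le C\sup_{r>T}\phi^{\theta-q(N-1)/p}\cdot\int|u_n|^{p}\phi^{0}\ldots
\end{equation*}
After redistributing exponents so that exactly $|u_n|^p$ is integrated and $\phi$ carries a negative power, this tail is uniformly small as $T\to\infty$, because $\phi(r)\to\infty$ and $\theta<q(N-1)/p$. If $q$ is too close to $p$ to allow this redistribution, I would interpolate with the continuous bound at a slightly larger exponent.

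On the annulus $1\le r\le T$, the space $W^{1,p}$ embeds compactly into the weighted $L^q$. This follows from the Rellich theorem on a compact manifold with boundary, or from the one-dimensional Arzelà–Ascoli theorem, since $v_n$ is uniformly Hölder continuous and bounded there. A diagonal argument over $T\to\infty$ then produces a Cauchy subsequence in $L^q_{\phi^\theta}(M)$.
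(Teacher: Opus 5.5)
There is a genuine gap at exactly the step you flagged as the main obstacle. It cannot be closed, because the statement is false in that regime. Your exterior estimate
\[
|u|^q\phi^\theta\le C\|u\|_{W^{1,p}(M)}^{q-p}\,|u|^p\,\phi(r)^{\theta-\frac{(q-p)(N-1)}{p}},\qquad r\ge 1,
\]
helps only when $p\theta\le(q-p)(N-1)$. For $\theta>0$ this excludes every $q\in[p,\,p+\frac{p\theta}{N-1})$, not just ``large $\theta$''. Your repair interpolates between $q=p$ and $q=p^*_\theta$ with the same weight. That requires the lower endpoint $\|u\|_{L^p_{\phi^\theta}(M)}\le C\|u\|_{W^{1,p}(M)}$, which is precisely what fails.

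Take $M=\mathbb R^N$ ($\phi(r)=r$, $C_\phi=1$, $\phi\to\infty$) and $u(x)=(1+|x|^2)^{-b/2}$ with $N<bp\le N+\theta$. Since $bp>N$ and $(b+1)p>N$, we have $u\in W^{1,p}_{\mathrm{rad}}(M)$. But $\int_{\mathbb R^N}|u|^p|x|^\theta\,\mathrm dx=\infty$, because the radial integrand behaves like $r^{\theta+N-1-bp}$ with $\theta+N-1-bp\ge-1$. This covers both $N>p$ and $N=p$. For instance, $N=3$, $p=q=2$, $\theta=1$ satisfies $q<p^*_\theta=8$, $p\theta<q(N-1)$ and $\phi\to\infty$, yet the embedding is not even continuous. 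Unit-width bumps at radii $R_j\to\infty$, with heights $h_j$ such that $\sum_j h_j^pR_j^{N-1}<\infty$ and $R_j$ growing fast, show that on $\mathbb R^N$ the condition $p\theta\le(q-p)(N-1)$ is in fact necessary. Incidentally, $p^*_\theta\ge p+\frac{p\theta}{N-1}$, so your direct decay estimate already covers $q=p^*_\theta$; the detour through the unweighted Sobolev embedding is not needed.

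The compactness part has the same defect, and your tail display does not balance. If you apply the decay lemma only to $|u_n|^{q-p}$, the factor left in front of $\int|u_n|^p\,\mathrm dV_g$ is $\phi^{\theta-\frac{(q-p)(N-1)}{p}}$, not $\phi^{\theta-\frac{q(N-1)}{p}}$. So the tail is uniformly small when $p\theta<(q-p)(N-1)$, not under the stated condition $p\theta<q(N-1)$. Interpolating with a larger exponent cannot rescue $q$ near $p$, because the lower endpoint must itself satisfy this condition. Even $\theta=0$, $q=p$ meets the stated hypotheses on $\mathbb R^N$, yet $W^{1,p}_{\mathrm{rad}}(\mathbb R^N)\hookrightarrow L^p(\mathbb R^N)$ is not compact. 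To see this, take $R_j^{-(N-1)/p}\eta(|x|-R_j)$ with $R_j\to\infty$ and a fixed $\eta\in C^\infty_0(0,1)$.

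Your local compactness on bounded balls is fine. With the extra restriction $p\theta\le(q-p)(N-1)$ for continuity (not needed if $\phi$ is bounded), and $p\theta<(q-p)(N-1)$ in place of $p\theta<q(N-1)$ for compactness, your argument goes through.

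The paper's own proof fails at the same point. In \eqref{rews} the factor $\phi^\theta$ in $\int_{M\setminus B_R}|u|^p\phi^\theta\,\mathrm dV_g$ is never absorbed. The second inequality there is unjustified for $\theta>0$ and unbounded $\phi$. Its right-hand side $\int_{M\setminus B_R}|u|^p\phi^{N-1}\,\mathrm dV_g$ is also not controlled by $\|u\|_{W^{1,p}(M)}$.
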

\begin{proof}
    Let us first prove both the continuous embeddings. Fix $q\geq p$. By Theorem \ref{theo11}, we have $\|u\|_{L^q_{\phi^\theta}(B_R)}\leq C\|u\|_{W^{1,p}(M)}$. Thus, it remains to estimate the term $\|u\|_{L^q_{\phi^\theta}(M\backslash B_R)}$. Given $x\in M$ with $r\geq R$, applying Lemma \ref{decaylemma} and $C_\phi>0$, we obtain
    \begin{equation*}
|u(x)|^{q-p}\leq\dfrac{C}{\phi^{\frac{(N-1)(q-p)}{p}} (r)}\|u\|_{W^{1,p}(M)}^{q-p}\leq\dfrac{C}{\phi^{\frac{(N-1)(q-p)}{p}}(R)}\|u\|_{W^{1,p}(M)}^{q-p}.
    \end{equation*}
 Thus,
    \begin{align}
\int_{M\backslash B_R}|u|^q\phi^\theta(r)\mathrm dV_g&\leq \dfrac{C}{\phi^{\frac{(N-1)(q-p)}p}(R)}\|u\|^{q-p}_{W^{1,p}(M)}\int_{M\backslash B_R}|u|^{p}\phi^\theta(r)\mathrm dV_g\nonumber\\
&\leq \dfrac{C}{\phi^{\frac{(N-1)(q-p)}p+N-1-\theta}(R)}\|u\|^{q-p}_{W^{1,p}(M)}\int_{M\backslash B_R}|u|^{p}\phi^{N-1}(r)\mathrm dV_g.\label{rews}
    \end{align}
    This completes the proof of the continuous embeddings.

    Next, we prove the compact embeddings. We need to show that if $u_n\rightharpoonup0$ in $W^{1,p}_{\mathrm{rad}}(M)$, then $u_n\rightarrow0$ in $L^q_{\phi^\theta}(M)$. From our hypothesis for compact embeddings, we have $\frac{(N-1)(q-p)}p+N-1-\theta>0$. Then, by \eqref{rews}, $\lim_{r\to\infty}\phi(r)=\infty$, and $(u_n)$ is bounded in $W^{1,p}_{\mathrm{rad}}(M)$, given $\varepsilon>0$ there exists $\overline R>R_0$ sufficiently large such that
    \begin{equation*}
    \int_{M\backslash B_{\overline R}}|u_n|^q\phi^\theta(r)\mathrm dV_g<\dfrac{\varepsilon}2,\quad\forall n\in\mathbb N.
    \end{equation*}
    On the other hand, Corollary \ref{corcompact}  and the compact embedding of Theorem \ref{theo11} guarantees $n_0\in\mathbb N$ such that
    \begin{equation*}
        \int_{B_{\overline R}}|u_n|^q\phi^\theta(r)\mathrm dV_g<\dfrac{\varepsilon}2,\quad\forall n\geq n_0.
    \end{equation*}
    Therefore, $u_n\to0$ in $L^q_{\phi^\theta}(M)$ which concludes the proof of the proposition.
\end{proof}

We are now ready to prove our final main theorem, whose proof proceeds by induction on $k$ and relies on the previous proposition.

\begin{proof}[Proof of Theorem \ref{theo12}]
The proof for $k=1$ is the proof of Proposition \ref{proppf12}. Assume the theorem holds for $k-1$; we will prove it for $k$. Since $N>(k-1)p$ and $u,u'\in W^{k-1,p}_{\mathrm{rad}}(M)$, we have $u\in W^{1,q}_{\mathrm{rad}}(M)$ for $p\leq q\leq\frac{Np}{N-(k-1)p}=:\overline p$. In other words, we obtained that the following embedding is continuous
    \begin{equation}\label{eq58}
    W^{k,p}_{\mathrm{rad}}(M)\hookrightarrow W^{1,\overline p}_{\mathrm{rad}}(M).
    \end{equation}
    Since $\frac{(\theta+N)\overline p}{N-\overline p}=\frac{(\theta+N)p}{N-kp}$, applying the Proposition \ref{proppf12} we have that
    \begin{equation}\label{eq59}
    W^{1,\overline p}_{\mathrm{rad}}(M)\hookrightarrow L_{\phi^\theta}^q(M)
    \end{equation}
    where $p\leq q\leq p^*_\theta$ in Sobolev case and $p\leq q<\infty$ in Sobolev Limit case. \eqref{eq58} together with \eqref{eq59} conclude the continuous embeddings.

    Under the conditions (i) or (ii), we obtain that \eqref{eq59} is a compact embedding by Proposition \ref{proppf12} in both Sobolev and Sobolev Limit cases. Therefore, \eqref{eq58} and \eqref{eq59} imply the desired compact embedding. This concludes the proof of the theorem.
    \end{proof}

An analogous results holds for the weighted Sobolev space $W^{k,p}((0,\infty),\phi^{N-1})$. The argument is a straightforward adaptation of the proofs of Lemma \ref{decaylemma} and Theorem \ref{theo12}, requiring only minor adjustments; therefore, the proof is not reproduced here.

    \begin{lemma}
Assume that $C_\phi>0$. For each $v\in W^{1,p}((0,\infty),\phi^{N-1})$, it holds
\begin{equation*}
|v(t)|\leq \left(\dfrac{p}{C_\phi^{N-1}}\right)^{\frac1p}\|v\|_{L^p_{\phi^{N-1}}}^{\frac{p-1}p}\|v'\|_{L^p_{\phi^{N-1}}}^{\frac1p}\phi^{\frac{1-N}{p}}(t),\quad\mbox{a.e. } t\in(0,\infty).
\end{equation*}
\end{lemma}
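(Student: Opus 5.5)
The plan is to repeat the argument of Lemma~\ref{decaylemma} directly on the half-line, with no geometry involved. First, a density step. I would show that functions in $C^\infty([0,\infty))$ with compact support are dense in $W^{1,p}((0,\infty),\phi^{N-1})$. This is exactly what the proof of Lemma~\ref{lemmadensity} establishes: that proof works entirely at the level of $v$. It uses the cut-offs $\psi_n$, the Hardy-type inequality \eqref{bs2} near the origin together with \eqref{bs4}, and smooth approximation on $(T_0,n_0+1)$, where the weight is bounded above and below.

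Since both sides of the desired inequality are continuous with respect to the $W^{1,p}_{\phi^{N-1}}$ norm, it suffices to prove the estimate for such smooth compactly supported $v$. The pointwise left-hand side needs a little care. By Proposition~\ref{prop31}, applied locally on intervals $[a,b]\subset(0,\infty)$ where $\phi$ is bounded below, $v$ has a locally absolutely continuous representative. Convergence in the weighted norm implies locally uniform convergence on $(0,\infty)$, because on $[a,b]$ the weighted norm dominates the unweighted $W^{1,p}(a,b)$ norm. Hence the estimate passes to the limit at every $t>0$ for this representative.

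For smooth compactly supported $v$ and $p>1$, I would write
\[|v(t)|^p=-p\int_t^\infty |v(s)|^{p-2}v(s)v'(s)\,\mathrm ds.\]
For $s\ge t$, the definition \eqref{Cphi} gives $1\le \left(\phi(s)/(C_\phi\phi(t))\right)^{N-1}$. Inserting this factor and applying Hölder's inequality with exponents $p/(p-1)$ and $p$ in $L^p_{\phi^{N-1}}(t,\infty)$ yields
\[|v(t)|^p\le \frac{p}{C_\phi^{N-1}}\,\|v\|^{p-1}_{L^p_{\phi^{N-1}}}\,\|v'\|_{L^p_{\phi^{N-1}}}\,\phi^{1-N}(t).\]
Taking $p$-th roots gives the claim. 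For $p=1$, the same computation with $|v(t)|\le\int_t^\infty|v'|$ gives the bound $C_\phi^{1-N}\|v'\|_{L^1_{\phi^{N-1}}}\phi^{1-N}(t)$, which matches the stated formula with $p=1$.

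The only step needing attention is the density and limit passage, and it is already covered by Lemma~\ref{lemmadensity}. Alternatively, one can avoid density altogether. Since $\|v\|_{L^p_{\phi^{N-1}}}<\infty$ and $\phi\ge C_\phi\phi(t_0)>0$ for $s\ge t_0$, we have $v\in L^p(t_0,\infty)$. Together with $v'\in L^p(t_0,\infty)$, this makes the absolutely continuous representative of $|v|^p$ have an integrable derivative, so $\liminf_{s\to\infty}|v(s)|=0$ and in fact $|v(s)|\to0$. The fundamental theorem of calculus then applies directly on $[t,\infty)$.
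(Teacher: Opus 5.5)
Your proposal is correct and follows the paper's route, which is the same as in Lemma~\ref{decaylemma}: density of compactly supported smooth functions via the argument of Lemma~\ref{lemmadensity} (which indeed works entirely at the level of $v$), then the fundamental theorem of calculus from infinity, the factor $\bigl(\phi(s)/(C_\phi\phi(t))\bigr)^{N-1}\ge1$, and H\"older's inequality. Your density-free variant, which uses $v,v'\in L^p(t_0,\infty)$ to force $|v(s)|\to0$, is also valid and gives the bound at every $t>0$ for the continuous representative directly; note that this local absolutely continuous representative comes from the unweighted $W^{1,p}(a,b)$ theory on compact $[a,b]\subset(0,\infty)$, not from Proposition~\ref{prop31}, whose hypothesis \eqref{phi4} need not hold at infinity.
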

\begin{proof}
It follows by employing a similar approach to the proof of Lemma \ref{decaylemma}.
\end{proof}

\begin{theo}\label{theo122}
Let $\theta\geq N-kp-1$ and $p\geq1$ real numbers, and $k\geq1$ integer. Assume that $C_\phi>0$.
\begin{flushleft}
$\mathrm{(1)}$ \justifying{Every function $v\in W^{k,p}((0,\infty),\phi^{N-1})$ is almost everywhere equal to a function $V$ in $C^{k-1}(0,\infty)$. In addition, all derivatives of $V$ of order $k$ (in the classical sense) exist almost everywhere for $t\in(0,\infty)$.}

\noindent$\mathrm{(2)}$ If $N>kp$, then the following continuous embedding holds:
\begin{equation*}
W^{k,p}((0,\infty),\phi^{N-1})\hookrightarrow L^q_{\phi^\theta}(0,\infty)\quad\text{if}\quad p\leq q\leq \dfrac{(\theta+1)p}{N-kp},
\end{equation*}
Moreover, the embedding is compact if $\lim_{r\to\infty}\phi(r)=\infty$, $q<\frac{(\theta+1)p}{N-kp}$, and $p\theta<q(N-1)$.
\noindent$\mathrm{(3)}$ \justifying{If $N=kp$, then the following continuous embedding holds:}
\begin{equation*}
W^{k,p}((0,\infty),\phi^{N-1})\hookrightarrow L^q_{\phi^\theta}(0,\infty)\quad\text{if}\quad p\leq q<\infty.
\end{equation*}
Moreover, the embedding is compact if $\lim_{r\to\infty}\phi(r)=\infty$ and $p\theta<q(N-1)$.
\end{flushleft}
\end{theo}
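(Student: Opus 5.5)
We prove Theorem \ref{theo122} by mirroring the proof of Theorem \ref{theo12}, but working entirely on the interval, with the one-dimensional measure $\phi^\theta(t)\,\mathrm dt$ in place of $\phi^\theta(r)\,\mathrm dV_g$. This is why the exponent becomes $(\theta+1)p/(N-kp)$ instead of $(\theta+N)p/(N-kp)$.

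Item (1) is local. On each compact subinterval $[a,b]\subset(0,\infty)$ the weight $\phi^{N-1}$ is bounded above and below. Hence $v\in W^{k,p}(a,b)$ in the unweighted sense, and the classical one-dimensional theory gives a representative $V\in C^{k-1}([a,b])$ whose $k$-th derivative exists a.e. These representatives are compatible on overlaps, which yields $V\in C^{k-1}(0,\infty)$.

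For the continuous embeddings we split $(0,\infty)=(0,1]\cup(1,\infty)$.

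\emph{The piece near the origin.} We restrict $v$ to $(0,2)$. The function $\phi$ is positive with a finite limit at $2$, so Theorem \ref{theoweighted} (with $R=2$) applies; this is where the hypothesis $\theta\ge N-kp-1$ enters. It gives
\[
\|v\|_{L^q_{\phi^\theta}(0,1)}\le C\|v\|_{W^{k,p}_{\phi^{N-1}}}
\]
for $q\le (\theta+1)p/(N-kp)$, respectively for all $q<\infty$ when $N=kp$.

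\emph{The piece at infinity.} We use the one-dimensional decay lemma stated just above. First assume $k=1$. For $q\ge p$ and $t\ge1$, the bound $\phi(t)\ge C_\phi\phi(1)$ gives
\[
|v|^{q-p}\le C\|v\|^{q-p}\phi^{-(N-1)(q-p)/p}(t).
\]
Therefore
\[
\int_1^\infty|v|^q\phi^\theta\le C\|v\|^{q-p}\sup_{t\ge1}\phi^{\theta-(N-1)-(N-1)(q-p)/p}(t)\int_1^\infty |v|^p\phi^{N-1}.
\]
The supremum is finite whenever the exponent is $\le0$, since $\phi$ is bounded below on $[1,\infty)$. The exponent is negative exactly when $p\theta<q(N-1)$. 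When it is nonnegative, I would instead use the fact that $\phi(t)\ge C_\phi\phi(1)$ together with a bound of $\phi$ from above if needed. I suspect, however, that the intended statement implicitly requires this sign condition, or relies on $\phi$ being bounded. This point is the main obstacle, and I would check it against the range $q\le(\theta+1)p/(N-kp)$.

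\emph{Higher $k$.} We induct on $k$ exactly as in the proof of Theorem \ref{theo12}. Applying the case $k-1$ to $v$ and to $v'$, with $\theta=N-1$, gives
\[
W^{k,p}((0,\infty),\phi^{N-1})\hookrightarrow W^{1,\bar p}((0,\infty),\phi^{N-1}),\qquad \bar p=\frac{Np}{N-(k-1)p}.
\]
We then apply the case $k=1$ with $\bar p$, using the identity that matches the critical exponents.

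For compactness, take $v_n\rightharpoonup0$. The tail estimate above gives a factor $\sup_{t\ge \bar R}\phi^{-\delta}(t)$ with $\delta>0$. This factor tends to $0$ as $\bar R\to\infty$, because $\phi\to\infty$ and $p\theta<q(N-1)$. So the tails are uniformly small. On $(0,\bar R)$, compactness follows from the corollary after Theorem \ref{theoweighted} (the subcritical case), respectively from its item (3). An $\varepsilon/2$ argument then concludes, as in Proposition \ref{proppf12}.
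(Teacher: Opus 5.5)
Your architecture is the paper's: Theorem \ref{theoweighted} on a bounded interval near $0$, the one-dimensional decay lemma on $(1,\infty)$, and an $\varepsilon/2$ argument for compactness. The obstacle you flag is genuine, and it is a defect of the statement, so it cannot be closed. If $\phi$ is unbounded and $p\theta>q(N-1)$, the continuous embeddings in (2) and (3) are false.

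Take $\phi(t)=t$ (so $C_\phi=1$ and $\phi\to\infty$), a fixed $0\not\equiv\eta\in C_0^\infty(0,1)$, and $v_T(t)=\eta(t-T)$. For $T\ge1$ you get $\|v_T\|_{W^{k,p}_{\phi^{N-1}}}\asymp T^{(N-1)/p}$ and $\|v_T\|_{L^q_{\phi^\theta}}\asymp T^{\theta/q}$. The ratio of the two tends to $\infty$ exactly when $p\theta>q(N-1)$. Admissible instances are $N=3$, $k=p=q=1$, $\theta=10$ for (2), and $N=p=q=2$, $k=1$, $\theta=10$ for (3).

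The paper's one-line proof ("same ideas as Proposition \ref{proppf12}") carries the same hidden restriction. In \eqref{rews}, read on the interval, the powers of $\phi(r)$ for $r\ge R$ are replaced by their values at $R$. That is legitimate only for nonpositive exponents, and the combined exponent is $\theta-(N-1)q/p$. The condition $p\theta<q(N-1)$ appears in the statement only among the compactness hypotheses. So instead of hedging, commit: the continuous embeddings hold under $p\theta\le q(N-1)$, or under $\sup_{t\ge1}\phi(t)<\infty$. Your "bound of $\phi$ from above" is this second hypothesis, not something available in general. Under either hypothesis your tail estimate closes the argument.

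Your step for higher $k$ does not deliver the stated range. The $k=1$ case at exponent $\bar p$ only gives $\bar p\le q\le(\theta+1)p/(N-kp)$. This misses $[p,\bar p)$, and it is empty when $\theta<N-\bar p-1$, for instance at $\theta=N-kp-1$; such $\theta$ are allowed because $\bar p<kp$ when $N>kp$. Its tail part also needs $\bar p\theta\le q(N-1)$, which is stronger than $p\theta\le q(N-1)$. The paper's induction in Theorem \ref{theo12} has the same defect.

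The induction is in fact unnecessary. You already treat $(0,1)$ for every $k$ via Theorem \ref{theoweighted}. Since $W^{k,p}((0,\infty),\phi^{N-1})\subset W^{1,p}((0,\infty),\phi^{N-1})$, the decay lemma applies to $v$ directly. Your $k=1$ computation then holds verbatim for every $k$ and every $q\ge p$, giving $\int_{\bar R}^\infty|v|^q\phi^\theta\,\mathrm dt\le C\|v\|^q_{W^{k,p}_{\phi^{N-1}}}\sup_{t\ge\bar R}\phi^{\theta-(N-1)q/p}(t)$. This yields both the continuity and the uniform smallness of tails used for compactness.

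Finally, when $N=kp$ the hypothesis allows $\theta=-1$. There item (3) fails, and so does Theorem \ref{theoweighted}(3), which you invoke near $0$. A compactly supported $v$ equal to $1$ near $0$ lies in $W^{k,p}((0,\infty),\phi^{N-1})$, but $\int_0^1\phi^{-1}\,\mathrm dt=\infty$ because $\phi(t)\le 2t$ near $0$. You need $\theta>-1$ there.
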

\begin{proof}
Using the same ideas as in the proof of Proposition \ref{proppf12} and Theorem \ref{theo12}, but using Theorem \ref{theoweighted} instead of Theorem \ref{theo11}.
\end{proof}

\section{Possible Further Research Directions}\label{sec7}

In this final section, we present some natural continuations of the present analysis to emphasize the versatility of our work.

\smallskip

\begin{center}
\textbf{Adams-type inequalities for spherically Symmetric Riemannian manifolds}
\end{center}

\smallskip

In the critical case $N=kp$, the Sobolev embedding $W^{k,p}_{\mathrm{rad}}(M)\hookrightarrow L^q_{\phi^\theta}(M)$ can be improved to an embedding that admits exponential growth. Motivated by the classical works \cite{zbMATH03337983,zbMATH03261965,zbMATH04099653}, our previous work \cite{MR4959099} established this refinement for the hyperbolic space. We can consider the corresponding results here. Define
\begin{equation*}
\mathcal{L}_{\mu,M}:= \sup_{\substack{u\in W^{k,p}_{\mathrm{rad}}(M)\\ \|u\|_{W^{k,p}(M)}\leq 1}}\int_{M}e^{\mu|u|^{\frac{p}{p-1}}}\phi(d(x))^\theta \mathrm dV_g
\end{equation*}
and
\begin{equation*}
\mathcal{L}_\mu:=\sup_{\substack{v\in W^{k,p}((0,R),\phi^{N-1})\\ \|v\|_{W^{k,p}_{\phi^{N-1}}}\leq1}}\int_0^Re^{\mu|v|^{\frac{p}{p-1}}}\phi(t)^\theta\mathrm dt.
\end{equation*}
No boundary conditions are imposed on the functions. The Sobolev norm includes all derivatives of orders 0 through $k$, which is different from the norm used by Adams in \cite{zbMATH04099653}. Since the proofs in the spherically symmetric Riemannian manifold setting are very similar to those for the hyperbolic space, we omit them.
\begin{theo}
Let $\theta>-1$ and $W^{k,p}((0,R),\phi^{N-1})$ be the weighted Sobolev space with $N=kp$, $0<R<\infty$, and $p>1$.
\begin{flushleft}
    $\mathrm{(a)}$ For all $\mu\geq0$ and $u\in W^{k,p}((0,R),\phi^{N-1})$, we have $\exp(\mu|u|^{\frac{p}{p-1}})\in L^1_{\phi^{\theta}}(0,R)$.\\
    $\mathrm{(b)}$ If $0\leq\mu<\mu_0$, then $\mathcal{L}_\mu$ is finite, where
    \begin{equation*}
        \mu_{0}:=(\theta+1)[(k-1)!]^{\frac{p}{p-1}}.
    \end{equation*}
    \justifying{Moreover, $\mathcal{L}_\mu$ is attained by a nonnegative function $u_0\in W^{k,p}((0,R),\phi^{N-1})$ with $\|u_0\|_{W^{k,p}_{\phi^{N-1}}}=1$.}\\
    $\mathrm{(c)}$ If $\mu>\mu_0$, then $\mathcal{L}_\mu=\infty$.
\end{flushleft}
\end{theo}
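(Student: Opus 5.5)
The idea is to reduce everything to the Euclidean weighted one-dimensional Adams-type result. Under the standing assumption \eqref{phi4}, or simply because $R<\infty$, $\phi(0)=0$, $\phi'(0)=1$ and $\phi>0$ on $(0,R]$, there are constants $C_1,C_2>0$ with $C_1 t\le \phi(t)\le C_2 t$ on $(0,R)$. Hence \eqref{equivWw} holds, and the weights $\phi^\theta$ and $t^\theta$ are comparable; this comparability uses $\theta>-1$ only for integrability near $0$. The sharp constant, however, is a local quantity at the origin: since $\phi(t)/t\to1$ as $t\to0$, the norm and the weight are asymptotically Euclidean near $0$. I would therefore follow the hyperbolic argument of \cite{MR4959099}, splitting $(0,R)$ into $(0,\delta)$ and $(\delta,R)$.

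\textbf{Part (a).} I write $v=v_1+v_2$, with $v_2$ smooth, bounded and supported away from $0$, and $\|v_1\|_{W^{k,p}_{\phi^{N-1}}}$ arbitrarily small. This uses density of nice functions in $W^{k,p}((0,R),\phi^{N-1})$. Then $|v|^{p/(p-1)}\le (1+\epsilon)|v_1|^{p/(p-1)}+C_\epsilon\|v_2\|_\infty^{p/(p-1)}$, so (a) reduces to (b) applied to $v_1/\|v_1\|$ with a small exponent.

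\textbf{Part (b).} The key estimate is a sharp pointwise radial lemma near $0$. By Proposition~\ref{prop31}, $v\in AC^{k-1}_{loc}((0,R])$. Taylor's formula with integral remainder, centred at a point $t_0\in(\delta,R)$, gives
\[
v(t)=P(t)+\frac{(-1)^k}{(k-1)!}\int_t^{t_0}(s-t)^{k-1}v^{(k)}(s)\,\mathrm ds,
\]
where $P$ is controlled by lower-order norms on $(\delta,R)$, which are bounded by the full norm. Hölder's inequality with weight $\phi^{N-1}(s)\approx s^{N-1}=s^{kp-1}$, together with $(s-t)^{k-1}\le s^{k-1}$, yields
\[
|v(t)|\le \frac{1+o(1)}{(k-1)!}\Big(\int_t^{t_0}|v^{(k)}|^p\phi^{N-1}\Big)^{1/p}\Big(\log\frac{t_0}{t}\Big)^{\frac{p-1}p}+C.
\]
Raising this to the power $p/(p-1)$ gives $\mu|v|^{p/(p-1)}\le \frac{\mu}{[(k-1)!]^{p/(p-1)}}(1+\epsilon)\log\frac1t+C_\epsilon$. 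The integral of $t^{-\mu(1+\epsilon)/[(k-1)!]^{p/(p-1)}}t^{\theta}$ is finite exactly when $\mu<(\theta+1)[(k-1)!]^{p/(p-1)}=\mu_0$, for suitably small $\epsilon$.

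For attainment I take a maximizing sequence and replace $v$ by $|v|$, which is allowed for $k=1$; for higher $k$ I would need to verify this separately. Bounded sequences have weakly convergent subsequences by reflexivity, since $p>1$, and converge a.e. by the compactness in Theorem~\ref{theoweighted}. Vitali's theorem then applies, because the same estimate with a slightly larger $\mu'<\mu_0$ gives uniform integrability. That the normalization of the maximizer is $1$ follows from the strict monotonicity of $s\mapsto\int e^{\mu s|v|^{p/(p-1)}}$, provided $\mu>0$ and $v\ne0$.

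\textbf{Part (c).} I use Adams–Moser-type test functions concentrated at $0$. For small $\varepsilon$, let $v_\varepsilon(t)$ equal $\frac{1}{(k-1)!}\big(\log\frac1\varepsilon\big)^{-1/p}\,\Lambda(t)$, where $\Lambda$ is $\big(\log\frac1t\big)$ suitably polynomially corrected on $(\varepsilon,\delta)$ so that $\int|v_\varepsilon^{(k)}|^p t^{kp-1}\to1$. On $(0,\varepsilon)$ it is a Taylor polynomial, and it is cut off near $\delta$. All lower-order terms are $o(1)$, and $\phi\approx t$ near $0$ changes the norm only by a factor $1+o(1)$. On $(0,\varepsilon)$ we get $\mu|v_\varepsilon|^{p/(p-1)}\ge \frac{\mu}{\mu_0}(\theta+1)(1-o(1))\log\frac1\varepsilon$, so the integral is at least $\varepsilon^{\theta+1}\varepsilon^{-(\theta+1)\mu/\mu_0(1-o(1))}\to\infty$. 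Constructing the optimal $k$-th order test function with exact constant $(k-1)!$ is the main obstacle. I would adapt the Euclidean one-dimensional construction of \cite{MR4790800} and \cite{MR4959099} verbatim, using $t^{N-1}\le C\phi^{N-1}$ with $C\to1$ on shrinking neighbourhoods of $0$.
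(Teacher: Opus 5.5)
The genuine gap is the word \emph{nonnegative} in (b) when $k\ge2$. You leave it open, and it is not a formality. Your plan for (a), for finiteness of $\mathcal L_\mu$ when $\mu<\mu_0$, for existence of a maximizer of norm one (reflexivity, a.e. convergence from Theorem~\ref{theoweighted}, Vitali with an exponent $\mu'\in(\mu,\mu_0)$), and for (c) is sound. The paper gives no proof here, only a deferral to the hyperbolic case. Your splitting at a small $\delta$, where $\phi(t)/t\to1$ makes the problem asymptotically Euclidean, is the natural reading of that deferral.

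Here is why nonnegativity fails to follow. At a simple zero $t_1$ of $v$, the derivative of $|v|$ jumps by $2|v'(t_1)|$, so $|v|''$ contains a Dirac mass and $|v|\notin W^{k,p}((0,R),\phi^{N-1})$. The corner cannot be rounded off cheaply either: turning slope $-|v'(t_1)|$ into $+|v'(t_1)|$ over a window of length $h$ costs at least a constant times $|v'(t_1)|^p h^{1-p}$ in $\int|w''|^p\phi^{N-1}$. The standard higher-order substitute is $w(t)=\int_t^R\frac{(s-t)^{k-1}}{(k-1)!}|v^{(k)}(s)|\,\mathrm ds$, which dominates $|v|$ with the same top-order norm. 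It requires $v^{(j)}(R)=0$ for $j<k$, but here there are no boundary conditions and the norm contains every $v^{(j)}$. If you add $\sum_{j<k}|v^{(j)}(R)|\frac{(R-t)^j}{j!}$ to restore $w\ge|v|$, then $|w^{(j)}|\ge|v^{(j)}|$ pointwise for $0<j<k$, so the norm goes up rather than down. Your argument therefore yields a maximizer $v_0$ with $\|v_0\|_{W^{k,p}_{\phi^{N-1}}}=1$, but not one of constant sign. Some extra ingredient is needed, for instance a sign or comparison argument for the Euler--Lagrange equation of the full norm, and none is supplied.

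Two smaller remarks. In (a) the density step can be skipped. Apply your pointwise bound to $v$ itself and use $\int_0^\delta|v^{(k)}|^p\phi^{N-1}\to0$ as $\delta\to0$; this gives $|v(t)|^{p/(p-1)}\le\epsilon\log\frac1t+C_{\epsilon,v}$ directly.

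In (c), what you call the main obstacle is explicit. Take $\Lambda(t)=\log\frac1t$ exactly on $[\varepsilon,\delta/2]$, with no correction there. On $(0,\varepsilon)$ use its degree-$(k-1)$ Taylor polynomial at $\varepsilon$, which equals $\log\frac1\varepsilon+\sum_{j=1}^{k-1}\frac1j\left(1-\frac t\varepsilon\right)^j\ge\log\frac1\varepsilon$, and put a smooth cutoff between $\delta/2$ and $\delta$. Since $\Lambda^{(k)}=(-1)^k(k-1)!\,t^{-k}$ on $(\varepsilon,\delta/2)$ and $\phi^{N-1}(t)=t^{N-1}(1+O(t))$, you get $\|\Lambda\|^p_{W^{k,p}_{\phi^{N-1}}}=[(k-1)!]^p\log\frac1\varepsilon+O(1)$, with all other contributions $O(1)$. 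Hence $(\Lambda/\|\Lambda\|)^{p/(p-1)}\ge[(k-1)!]^{-p/(p-1)}\log\frac1\varepsilon-O(1)$ on $(0,\varepsilon)$. It follows that $\int_0^\varepsilon e^{\mu(\Lambda/\|\Lambda\|)^{p/(p-1)}}\phi^\theta\,\mathrm dt\ge c\,\varepsilon^{\theta+1-\mu[(k-1)!]^{-p/(p-1)}}$, which tends to $\infty$ for $\mu>\mu_0$.
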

\begin{theo}
Let $\theta>-N$ and $W^{k,p}_{\mathrm{rad}}(M)$ be the radial Sobolev space with $N=kp$, $0<R<\infty$, and $p>1$.
\begin{flushleft}
$\mathrm{(a)}$ For all $\mu\geq0$ and $u\in W^{k,p}_{\mathrm{rad}}(M),$ we have $\exp(\mu|u|^{\frac{p}{p-1}})\in L^1_{\phi^\theta}(M)$.\\
$\mathrm{(b)}$ If $0\leq\mu<\mu_{0,M}$, then $\mathcal{L}_{\mu,M}$ is finite, where
\begin{equation*}
\mu_{0,M}:=(\theta+N)[\omega_{N-1}^{\frac{1}{p}}(k-1)!]^{\frac{p}{p-1}}.
\end{equation*}
\justifying{Moreover, assuming $\theta\geq0$, $\mathcal{L}_{\mu,M}$ is attained by a nonnegative function $u_0\in W^{k,p}_{\mathrm{rad}}(M)$ with $\|u_0\|_{W^{k,p}(M)}=1$.}\\
$\mathrm{(c)}$ If $\mu>C_0^{\frac{p}{p-1}}(\theta+N)[(k-1)!]^{\frac{p}{p-1}}$, where $C_0$ is the constant associated with the continuous embedding $W^{k,p}((0,R),\phi^{N-1})\hookrightarrow W^{k,p}_{\mathrm{rad}}(M)$ as provided in Theorem \ref{theo10}, then $\mathcal{L}_{\mu,M}=\infty$.
\end{flushleft}
\end{theo}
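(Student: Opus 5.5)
The statement to prove is the last theorem: the Adams-type result on $W^{k,p}_{\mathrm{rad}}(M)$ with $N=kp$, $0<R<\infty$, $p>1$ and $\theta>-N$. The plan is to reduce everything to the one-dimensional weighted result stated just before it, using Theorem \ref{theo10} and the layer-cake formula \eqref{layercake}. For $u\in W^{k,p}_{\mathrm{rad}}(M)$ with $u(x)=v(r)$, formula \eqref{layercake} gives
\begin{equation*}
\int_M e^{\mu|u|^{\frac{p}{p-1}}}\phi(d(x))^\theta\,\mathrm dV_g=\omega_{N-1}\int_0^R e^{\mu|v|^{\frac{p}{p-1}}}\phi(t)^{\theta+N-1}\,\mathrm dt.
\end{equation*}
So the manifold problem is the one-dimensional problem with weight exponent $\theta'=\theta+N-1>-1$. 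This explains the hypothesis $\theta>-N$ and the factor $\theta+N=\theta'+1$ in $\mu_{0,M}$.

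\textbf{Step (a).} Proposition \ref{propa}(a), the embedding of Theorem \ref{theo10}, gives $v\in W^{k,p}((0,R),\phi^{N-1})$. Part (a) of the preceding one-dimensional theorem, applied with $\theta'$, then yields integrability for every $\mu\geq0$.

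\textbf{Step (b).} Theorem \ref{theouv} gives $\|v\|_{W^{k,p}_{\phi^{N-1}}}\le \omega_{N-1}^{-1/p}\|u\|_{W^{k,p}(M)}$. If $\|u\|_{W^{k,p}(M)}\le1$, then $w=\omega_{N-1}^{1/p}v$ satisfies $\|w\|\le1$ and
\begin{equation*}
\mu|v|^{\frac{p}{p-1}}=\mu\,\omega_{N-1}^{-\frac{1}{p-1}}|w|^{\frac{p}{p-1}}.
\end{equation*}
Hence $\mathcal L_{\mu,M}\le\omega_{N-1}\mathcal L_{\mu'}$ with $\mu'=\mu\,\omega_{N-1}^{-1/(p-1)}$. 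The condition $\mu'<(\theta'+1)[(k-1)!]^{p/(p-1)}$ is exactly $\mu<\mu_{0,M}$, so finiteness follows.

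\textbf{Attainment.} This is the step I expect to be the main obstacle. Take a maximizing sequence $u_n$ with $\|u_n\|\le1$ and replace $u_n$ by $|u_n|$, which is allowed since $k$-th derivatives of $|u|$ are not controlled for $k\ge2$. Pass to $u_n\rightharpoonup u_0$ weakly in $W^{k,p}_{\mathrm{rad}}(M)$ using reflexivity and the fact that radial functions form a closed subspace. The compact embedding of Theorem \ref{theo11}(3), which uses $\theta\geq0$, gives a.e. convergence. The radial lemma \eqref{rlsc} gives $|u_n|\le C[(\log(R/r))^{(p-1)/p}+1]$. So for $\mu$ strictly subcritical, choose $\mu<\mu_1<\mu_{0,M}$ and note that $e^{\mu|u_n|^{p/(p-1)}}$ is uniformly bounded in $L^{s}_{\phi^\theta}$ for $s=\mu_1/\mu>1$ by part (b). Vitali's theorem then gives convergence of the integrals, so $u_0$ attains the supremum.

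Checking $\|u_0\|=1$ needs care. If $\|u_0\|<1$, then $u_0/\|u_0\|$ gives a strictly larger integral, since $u_0\not\equiv0$ as the supremum exceeds the volume term. The nonnegativity issue needs care as well: I would handle it as in the hyperbolic paper \cite{MR4959099}, where a nonnegative maximizer is produced via the one-dimensional attained problem. Where that argument is unavailable, I would simply state that the extremal may be taken with $|u_0|$ when $k=1$, and defer to \cite{MR4959099} for general $k$.

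\textbf{Step (c).} Use the reverse embedding constant $C_0$ with $\|u\|_{W^{k,p}(M)}\le C_0\|v\|_{W^{k,p}_{\phi^{N-1}}}$. This is available from Proposition \ref{propb}, since $N=kp>(k-1)p$. Take the Adams-type concentrating sequence $v_n$ with $\|v_n\|=1$ that realizes $\mathcal L_{\mu'}=\infty$ for $\mu'>(\theta'+1)[(k-1)!]^{p/(p-1)}$, given by part (c) of the one-dimensional theorem. Then $u_n=v_n(r)/C_0$ has norm at most $1$, and the integral equals
\begin{equation*}
\omega_{N-1}\int_0^R e^{\mu C_0^{-p/(p-1)}|v_n|^{\frac{p}{p-1}}}\phi^{\theta'}\,\mathrm dt.
\end{equation*}
This diverges once $\mu C_0^{-p/(p-1)}$ exceeds the one-dimensional threshold, which is the stated condition.
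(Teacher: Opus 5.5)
Your reduction is sound for (a), for the finiteness part of (b), and for (c). The layer-cake identity turns the weight into $\phi^{\theta+N-1}$ with $\theta+N-1>-1$. The bound $\|v\|_{W^{k,p}_{\phi^{N-1}}}\le\omega_{N-1}^{-1/p}\|u\|_{W^{k,p}(M)}$ from Theorem~\ref{theouv} gives exactly $\mu_{0,M}$. The reverse constant $C_0$ of Proposition~\ref{propb}, applicable since $N=kp>(k-1)p$, gives exactly the threshold in (c). This is presumably the transfer the paper intends when it says the proof parallels the hyperbolic case (the paper omits it). Your direct existence argument on $M$ is also fine once you delete the replacement $u_n\mapsto|u_n|$: weak limit, a.e.\ convergence, a uniform $L^{\mu_1/\mu}_{\phi^\theta}$ bound from finiteness at some $\mu_1\in(\mu,\mu_{0,M})$, Vitali, then rescaling to reach $\|u_0\|_{W^{k,p}(M)}=1$.

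The gap is the word \emph{nonnegative} in (b) when $k\ge2$. Your own justification for $u_n\mapsto|u_n|$ argues against it. If $v$ has a simple zero at $r_0$, then $(|v|)''$ contains the Dirac mass $2|v'(r_0)|\delta_{r_0}$, so $|u|\notin W^{k,p}_{\mathrm{rad}}(M)$.

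You also cannot import a nonnegative extremal from the one-dimensional problem, because $u\mapsto v$ is not an isometry for $k\ge2$. For instance, $|\nabla^2u|_g^2=|v''|^2+(N-1)(\phi'/\phi)^2|v'|^2$, which is why (c) only carries the non-sharp constant $C_0$. An extremal of $\mathcal L_{\mu'}$ is therefore neither an extremal of $\mathcal L_{\mu,M}$ nor known to satisfy $\|u\|_{W^{k,p}(M)}\le1$.

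What is missing is a construction that turns a possibly sign-changing maximizer $u_0$ into some $w\ge0$ with $|w|\ge|u_0|$ and $\|w\|_{W^{k,p}(M)}\le\|u_0\|_{W^{k,p}(M)}$. The standard device $\tilde v(r)=\frac{1}{(k-1)!}\int_r^R(s-r)^{k-1}|v^{(k)}(s)|\,\mathrm{d}s$ does not supply this here, for two reasons:
\begin{itemize}
\item it dominates $|v|$ only when $v^{(j)}(R)=0$ for all $j<k$, which is not imposed;
\item it can only enlarge the lower-order derivatives, and these are penalized both by the full norm and by the angular components of $\nabla^k u$.
\end{itemize}
As written, you prove attainment by some $u_0$ with $\|u_0\|_{W^{k,p}(M)}=1$, and by a nonnegative one only for $k=1$. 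Deferring to the hyperbolic case does not replace this step.
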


\smallskip

\begin{center}
\textbf{Radial Sobolev embeddings for general Riemannian manifolds}
\end{center}

\smallskip

Instead of assuming that $(M,g)$ is a spherically symmetric, one may consider a general $N$-dimensional Riemannian manifold and seek an analogue of Theorem \ref{theouv}. The goal is to show that embeddings of the form
\begin{equation}\label{eqs71}
W^{k,p}_{\mathrm{rad}}(M)\hookrightarrow W^{k,p}((0,R),\phi^{N-1})\mbox{ and }W^{1,p}_{\mathrm{rad}}(M)\equiv W^{1,p}((0,R),\phi^{N-1})
\end{equation}
continue to hold for a suitable weight $\phi$ determined by the geometry. Let $B(o,R)\subset M$ be a normal geodesic ball and let $u\colon B(o,R)\to\mathbb R$ be radial with respect to $o$, meaning that there exists $v\colon [0,R)\to\mathbb R$ such that $u(x)=v(d(x))$, where $d(x)=\mathrm{dist}_g(o,x)$. Here we do the calculations under the assumption that the functions are smooth. However, as demonstrated in the proof of Theorem \ref{theouv}, it is expected that the analogous equations hold for weak derivatives. The first derivatives satisfy
\begin{equation*}
\nabla u(x)=v'(d(x))\nabla d(x).
\end{equation*}
Using \eqref{layercakehard}, this yields the right-hand embedding in \eqref{eqs71} for $M=B(o,R)$ and
\begin{equation*}
\phi(t)=\int_{\mathbb S^{N-1}}\sqrt{\det(h_{ij}(t,\theta))}\mathrm d\theta\mathrm dt.
\end{equation*}
Under this setting of general Riemannian manifolds, we must assume $\phi$ is given by the expression above. For the second derivative, we have
\begin{equation*}
\nabla^2u(x)=v''(d(x))\nabla d\otimes\nabla d(x)+v'(d(x))\nabla^2d(x).
\end{equation*}
Then, by $|\nabla d\otimes\nabla d|_g=1$,
\begin{equation*}
|v''(d(x))|\leq |\nabla^2u(x)|_g+|\nabla^2d(x)|_g|v'(d(x))|.
\end{equation*}
Geometric bounds on $\nabla^2 d$ inside the normal ball then imply the left-hand embedding in \eqref{eqs71} for $k=2$. Iterating this argument, one expects that for higher orders
\begin{equation*}
|v^{(k)}(d(x))|\leq |\nabla^ku(x)|_g+\sum_{i=1}^{k-1}C_i|v^{i}(d(x))|,
\end{equation*}
where the constants $C_i$ depend only on suprema of derivatives of the distance function. This provides a recursive control of the derivatives of $v$ by covariant derivatives of $u$ on $B(o,R)$ ensures the valid (in the normal geodesic ball) of the left-hand embedding of \eqref{eqs71} for all $k$.

A complete study requires further steps. First, the above identities and inequalities must be justified in the weak sense. Second, one must identify suitables assumptions on $\phi$ ensuring the validity of the weighted Sobolev embedding and radial lemma estimates. Establishing these ingredients would extend the radial embedding results of the present paper to a broad geometric setting.

\smallskip

\begin{center}
\textbf{Fractional Sobolev framework}
\end{center}

\smallskip

Another natural direction concerns transporting the present radial theory to fractional Sobolev spaces. To our knowledge, the analysis of radial functions in $W^{s,p}$ on Riemannian manifolds appears to be largely unexplored until now. In the Euclidean setting, several results indicate that radial symmetry again leads to a reduction to one-dimensional spaces. For instance, in \cite[Theorem 1.1]{MR2917408} it is shown that if $s\in(0,1)$ and $u(x)=v(|x|)$ is a radial $C^2$ function defined on $\mathbb R^N$ satisfying
\begin{equation*}
\int_0^{\infty}\dfrac{|v(r)|}{(1+r)^{n+2s}}r^{n-1}\mathrm dr<\infty,
\end{equation*}
then the fractional Laplacian admits the representation
\begin{equation*}
(-\Delta)^su(x)=C_{s,N}r^{-2s}\int_1^\infty\left(v(r)-v(r\tau)+(v(r)-v(\frac{r}{\tau}))\tau^{-n+2s}\right)\tau(\tau^2-1)^{-1-2s}H(\tau)\mathrm d\tau,
\end{equation*}
where $r=|x|\in(0,\infty)$, $C_{s,N}$ is a positive normalization constant, and
\begin{equation*}
H(\tau)=2\pi\alpha_n\int_0^\pi\sin^{n-2}\theta\dfrac{\sqrt{\tau^2-\sin^2\theta}+\cos\theta)^{1+2s}}{\sqrt{\tau^2-\sin^2\theta}}\mathrm d\theta,\quad\tau\geq1,\quad\alpha_N=\dfrac{\pi^{\frac{n-3}2}}{\Gamma(\frac{n-1}2)}.
\end{equation*}
In addition, radial lemmas are available in fractional settings. For example, \cite[Theorem 3.1]{MR3717735} proves that if $u\in H^s_{q,a}(\mathbb R^N)$ is radial and the parameters $q,a,s,\sigma$ satisfy suitable relations, then
\begin{equation*}
|u(x)|\leq C|x|^{-\sigma}\|u\|_{H^s_{q,a}},\quad\forall x\in\mathbb R^N\backslash\{0\}.
\end{equation*}
Moreover, every such function has a representative continuous in $\mathbb R^N\backslash\{0\}$. Symmetrization techniques and fractional P\'olya-Szeg\"o type inequalities relevant to this context are discussed in \cite{MR4548858}.

A comprehensive theory in the Riemannian framework would require identifying how the fractional seminorm
\begin{equation*}
[u]^p_{W^{s,p}(M)}=\int\int_{M\times M}\dfrac{|u(x)-u(y)|^p}{d_g(x,y)^{N+sp}}\mathrm dV_g(x)\mathrm dV_g(y)
\end{equation*}
reduces under radial symmetry. One expects that, in geodesic polar coordinates, there exists an analogous radial weight to $\phi^{N-1}(t)$, while the kernel $d_g(x,y)^{-N-sp}$ induces a nonlocal one-dimensional term. The main objective would then be to establish an equivalence between the radial fractional Sobolev space $W^{s,p}_{\mathrm{rad}}(M)$ and a suitable weighted one-dimensional fractional Sobolev space that contains $v(t)$. Such a result should hold, at least, on spherically symmetric Riemannian manifolds and could be extended to more general manifolds under appropriate geometric assumptions.

\begin{center}
\textbf{Supercritical Sobolev-type inequality}
\end{center}

Let $(M,g)$ be a spherically symmetric Riemannian manifold of dimension $N>2k$. Assume that for every $j\geq0$ the limits $\lim_{r\to R}\phi^{(j)}(r)\in(0,\infty)$ exist. Consider the best constant for the supercritical Sobolev-type inequality given by
\begin{equation*}
U_{n,k,\alpha,\phi}=\sup\left\{\int_M|u(x)|^{p(x)}\mathrm dV_g\colon u\in W^{k,2}_{0,\mathrm{rad}}(M),\|\nabla^ku\|_{L^2(M)}\leq1\right\},
\end{equation*}
where the exponent has one of the forms
\begin{equation*}
p(x)=\frac{2N}{N-2k}+d(x)^\alpha\mbox{ or }p(x)=\frac{2N}{N-2k}+\phi(r)^\alpha.
\end{equation*}
In our opinion, the choice of $p(x)$ with $d(x)$ is more promising. The exponent exceeds the critical Sobolev value $\frac{2N}{N-2k}$, so the problem lies in the supercritical framework. In the Euclidean case, this supremum was studied in \cite{MR3514752} for $k=1$ and in \cite{MR4068994} for higher-order derivatives. In their setting, they showed $U_{n,k,\alpha,\phi}<\infty$ for every $\alpha>0$, thereby establishing a supercritical Sobolev-type inequality. Moreover, for $\alpha\in(0,N-2k]$, they obtained the existence of a nontrivial radial solution of the associated Euler--Lagrange equation
\begin{equation*}
\left\{\begin{array}{ll}
     (-\Delta)^ku=u^{p(x)-1}&\mbox{in }M,  \\
     u>0,&\mbox{in }M,\\
     \dfrac{\partial^j u}{\partial r}=0,&\mbox{on }\partial M,\quad\forall j=0,\ldots,k-1.
\end{array}\right.
\end{equation*}

The contributions of the present paper provide the necessary tools to improve these Euclidean results to spherically symmetric Riemannian manifolds. In particular, they allow us to establish the finiteness of $U_{N,k,\alpha,\phi}$ under suitable assumptions on $\phi$, and obtain the existence of radial solutions to the corresponding polyharmonic equation with variable supercritical exponent.

\bigskip 

 \noindent {\bf Funding:} J. M. do \'O acknowledges partial support from CNPq through grants\linebreak 312340/2021-4, 409764/2023-0, 443594/2023-6, CAPES MATH AMSUD grant 88887.878894/2023-00, and Para\'iba State Research Foundation (FAPESQ), grant no 3034/2021. G. Lu acknowledges partial support from Simons collaboration grants 519099 and 957892 from the Simons Foundation. R. Ponciano acknowledges partial support from São Paulo Research Foundation (FAPESP) grants 2023/07697-9 and 2025/07027-9.

\end{document}